\title[EL-labelings and canonical spanning trees for subword complexes]{EL-labelings and canonical spanning trees \\ for subword complexes}
\thanks{V.\,P.~was supported by the spanish MICINN grant MTM2011-22792, by the french ANR grant EGOS 12 JS02 002 01, by the European Research Project ExploreMaps (ERC StG 208471), and by a postdoctoral grant of the Fields Institute of Toronto.}
\author{Vincent Pilaud}
\address{CNRS \& LIX, \'Ecole Polytechnique, Palaiseau}
\email{vincent.pilaud@lix.polytechnique.fr}
\urladdr{http://www.lix.polytechnique.fr/~pilaud/}
\author{Christian Stump}
\address{Institut f\"ur Algebra, Zahlentheorie, Diskrete Mathematik, Universit\"at Hannover}
\email{stump@math.uni-hannover.de}
\urladdr{http://homepage.univie.ac.at/christian.stump/}
\newtheorem{theorem}{Theorem}[section]
\newtheorem{corollary}[theorem]{Corollary}
\newtheorem{proposition}[theorem]{Proposition}
\newtheorem{lemma}[theorem]{Lemma}
\theoremstyle{definition}
\newtheorem{example}[theorem]{Example}
\newtheorem{remark}[theorem]{Remark}
\newcommand{\R}{\mathbb{R}} 
\newcommand{\N}{\mathbb{N}} 
\newcommand{\Z}{\mathbb{Z}} 
\newcommand{\cX}{\mathcal{X}} 
\newcommand{\cN}{\mathcal{N}} 
\newcommand{\cT}{\mathcal{T}} 
\newcommand{\fS}{\mathfrak{S}} 
\newcommand{\set}[2]{\left\{ #1 \;\middle|\; #2 \right\}} 
\newcommand{\multiset}[2]{\left\{\!\!\left\{ #1 \;\middle|\; #2 \right\}\!\!\right\}} 
\newcommand{\ssm}{\smallsetminus} 
\newcommand{\dotprod}[2]{\langle #1 | #2 \rangle} 
\newcommand{\symdif}{\triangle} 
\newcommand{\zero}{\mathbf{0}} 
\newcommand{\one}{\mathbf{1}} 
\newcommand{\eqdef}{\mbox{\,\raisebox{0.2ex}{\scriptsize\ensuremath{\mathrm:}}\ensuremath{=}\,}} 
\newcommand{\sq}[1]{{\rm #1}} 
\newcommand{\Q}{\sq{Q}} 
\newcommand{\q}{\sq{q}} 
\newcommand{\eraseFirst}{\vdash} 
\newcommand{\eraseLast}{\dashv} 
\newcommand{\shiftRight}[1]{#1^{\rightarrow}} 
\newcommand{\shiftLeft}[1]{#1^{\leftarrow}} 
\newcommand{\wordprod}[2]{\Pi{#1}_{#2}} 
\newcommand{\length}{\ell} 
\newcommand{\subwordComplex}[1][\Q,\rho]{\mathcal{SC}(#1)} 
\newcommand{\Root}[2]{\mathsf{r}(#1,#2)} 
\newcommand{\Roots}[1]{\mathsf{R}(#1)} 
\newcommand{\flipGraph}[1][\Q,\rho]{\mathcal{G}(#1)} 
\newcommand{\flipPoset}[1][\Q,\rho]{\Gamma(#1)} 
\newcommand{\facets}[1][\Q,\rho]{\mathcal{F}(#1)} 
\newcommand{\positiveEdgeLabel}{\mathsf{p}} 
\newcommand{\negativeEdgeLabel}{\mathsf{n}} 
\newcommand{\positiveFacet}[1][\Q,\rho]{\mathsf{P}(#1)} 
\newcommand{\negativeFacet}[1][\Q,\rho]{\mathsf{N}(#1)} 
\newcommand{\positiveSourceTree}[1][\Q,\rho]{\mathcal{P}(#1)} 
\newcommand{\negativeSourceTree}[1][\Q,\rho]{\mathcal{N}(#1)} 
\newcommand{\positiveSinkTree}[1][\Q,\rho]{\mathcal{P}^*(#1)} 
\newcommand{\negativeSinkTree}[1][\Q,\rho]{\mathcal{N}^*(#1)} 
\newcommand{\flipsOut}{\mathsf{P}} 
\newcommand{\flipsIn}{\mathsf{N}} 
\newcommand{\sortable}[2]{\operatorname{\textsc{Sort}}_{#1}(#2)} 
\newcommand{\cwo}[1]{\sw{w_\circ}{#1}} 
\newcommand{\sortingTree}[1][\sq{c}]{\mathcal{T}(#1)} 
\newcommand{\sw}[2]{\sq{#1}(\sq{#2})} 
\newcommand{\Qdup}{\Q^{\sq{dup}}} 
\newcommand{\Qex}{\Q^{\sq{ex}}} 
\newcommand{\rhoex}{\rho^{\sq{ex}}} 
\newcommand{\Iex}{I^{\sq{ex}}} 
\newcommand{\Jex}{J^{\sq{ex}}} 
\newcommand{\even}{\mathrm{even}} 
\newcommand{\odd}{\mathrm{odd}} 
\newcommand{\edge}{\mathbin{\tikz [semithick, baseline=-0.2ex,-latex, ->] \draw [->] (0pt,0.4ex) -- (0.7em,0.4ex);}} 
\newcommand{\edgePositiveLabel}[1]{\mathbin{\tikz [semithick, baseline=-0.2ex,-latex, ->] \draw [-] (0pt,0.4ex) -- (0.7em,0.4ex); #1 \tikz [semithick, baseline=-0.2ex,-latex, ->] \draw [->] (0pt,0.4ex) -- (0.7em,0.4ex);}} 
\newcommand{\directedPath}{\mathbin{\tikz [semithick, baseline=-0.2ex,-latex, ->, densely dashed] \draw [->,densely dashed] (0pt,0.4ex) -- (1.0em,0.4ex);}} 
\newcommand{\sep}{\edge} 
\newcommand{\sourceTree}{\mathcal{T}} 
\newcommand{\sinkTree}{\mathcal{T}^*} 
\DeclareMathOperator{\conv}{conv} 
\DeclareMathOperator{\inv}{inv} 
\DeclareMathOperator{\vect}{vect} 
\DeclareMathOperator{\join}{\star} 
\newcommand{\fref}[1]{Figure~\ref{#1}} 
\newcommand{\ie}{\textit{i.e.}~} 
\newcommand{\eg}{\textit{e.g.}~} 
\newcommand{\viceversa}{\textit{vice versa}} 
\newcommand{\ordinal}{\textsuperscript{th}} 
\definecolor{darkblue}{rgb}{0,0,0.7} 
\newcommand{\darkblue}{\color{darkblue}} 
\newcommand{\defn}[1]{\emph{\darkblue #1}} 
\renewcommand{\paragraph}[1]{\bigskip\noindent\textbf{#1.}} 
\begin{document}

\begin{abstract}
We describe edge labelings of the increasing flip graph of a subword complex on a finite Coxeter group, and study applications thereof.
On the one hand, we show that they provide canonical spanning trees of the facet-ridge graph of the subword complex, describe inductively these trees, and present their close relations to greedy facets.
Searching these trees yields an efficient algorithm to generate all facets of the subword complex, which extends the greedy flip algorithm for pointed pseudotriangulations.
On the other hand, when the increasing flip graph is a Hasse diagram, we show that the edge labeling is indeed an EL-labeling and derive further combinatorial properties of paths in the increasing flip graph.
These results apply in particular to Cambrian lattices, in which case a similar EL-labeling was recently studied by M.~Kallipoliti and H.~M\"uhle.
\end{abstract}

\maketitle

\tableofcontents

\section{Introduction}

Subword complexes on Coxeter groups were defined and studied by A.~Knutson and E.~Miller in the context of Gr\"obner geometry of Schubert varieties~\cite{KnutsonMiller-subwordComplex,KnutsonMiller-GroebnerGeometry}.
Type~$A$ spherical subword complexes can be visually interpreted using pseudoline arrangements on primitive sorting networks.
These were studied by V.~Pilaud and M.~Pocchiola \cite{PilaudPocchiola} as combinatorial models for pointed pseudotriangulations of planar point sets~\cite{RoteSantosStreinu-survey} and for multitriangulations of convex polygons~\cite{PilaudSantos-multitriangulations}.
These two families of geometric graphs extend in two different ways the family of triangulations of a convex polygon.

The greedy flip algorithm was initially designed to generate all pointed pseudotriangulations of a given set of points or convex bodies in general position in the plane~\cite{PocchiolaVegter, BronnimannKettnerPocchiolaSnoeying}.
It was then extended in~\cite{PilaudPocchiola} to generate all pseudoline arrangements supported by a given primitive sorting network.
The key step in this algorithm is to construct a spanning tree of the flip graph on the combinatorial objects, which has to be sufficiently canonical to be visited in polynomial time per node and polynomial working space.

In the present paper, we study natural edge lexicographic labelings of the increasing flip graph of a subword complex on any finite Coxeter group.
As a first line of applications of these EL-labelings, we obtain canonical spanning trees of the flip graph of any subword complex.
We provide alternative descriptions of these trees based on their close relations to greedy facets, which are defined and studied in this paper.
Moreover, searching these trees provides an efficient algorithm to generate all facets of the subword complex.
For type~$A$ spherical subword complexes, the resulting algorithm is that of~\cite{PilaudPocchiola}, although the presentation is quite different.

The second line of applications of the EL-labelings concerns combinatorial properties ensuing from EL-shellability~\cite{Bjorner, BjornerWachs3}.
Indeed, when the increasing flip graph is the Hasse diagram of the increasing flip poset, this poset is EL-shellable, and we can compute its M\"obius function.
These results extend recent work of M.~Kallipoliti and H.~M\"uhle~\cite{KallipolitiMuhle} on EL-shellability of N.~Reading's Cambrian lattices~\cite{Reading-latticeCongruences, Reading-cambrianLattices, Reading-coxeterSortable, Reading-sortableElements}, which are, for finite Coxeter groups, increasing flip posets of specific subword complexes studied by C.~Ceballos, J.-P.~Labb\'e and C.~Stump~\cite{CeballosLabbeStump} and by the authors in~\cite{PilaudStump}.


\section{Edge labelings of graphs and posets}
\label{sec:ELlabelings}

In~\cite{Bjorner}, A.~Bj\"orner introduced EL-labelings of partially ordered sets to study topological properties of their order complexes.
These labelings are edge labelings of the Hasse diagrams of the posets with certain combinatorial properties.
In this paper, we consider edge labelings of finite, acyclic, directed graphs which might differ from the Hasse diagrams of their transitive closures.


\subsection{ER-labelings of graphs and associated spanning trees}
\label{subsec:ERlabelings}

Let~$G \eqdef (V,E)$ be a finite, acyclic, directed graph.
For~$u,v \in V$, we write~$u \edge v$ if there is an edge from~$u$ to~$v$ in~$G$, and $u \directedPath v$ if there is a \defn{path} ${u = x_1 \edge x_2 \edge \cdots \edge x_{\ell+1} = v}$ from~$u$ to~$v$ in~$G$ (this path has \defn{length}~$\ell$).
The \defn{interval}~$[u,v]$ in~$G$ is the set of vertices~$w \in V$ such that~$u \directedPath w \directedPath v$.

An \defn{edge labeling} of~$G$ is a map~$\lambda : E \to \N$.
It induces a labeling~$\lambda(p)$ of any path~$p : x_1 \edge x_2 \edge \cdots \edge x_\ell \edge x_{\ell+1}$ given by $\lambda(p) \eqdef \lambda(x_1 \sep x_2) \cdots \lambda(x_\ell \sep x_{\ell+1})$.
The path~$p$ is \defn{$\lambda$-rising} (resp.~\defn{$\lambda$-falling}) if~$\lambda(p)$ is strictly increasing (resp. weakly decreasing).
The labeling~$\lambda$ is an \defn{edge rising labeling} of~$G$ (or \defn{ER-labeling} for short) if there is a unique $\lambda$-rising path~$p$ between any vertices~$u,v \in V$ with~$u \directedPath v$.

\begin{remark}[Spanning trees]
\label{rem:spanningTrees}
Let~$u,v \in V$, and~$\lambda : E \to \N$ be an ER-labeling of~$G$.
Then the union of all $\lambda$-rising paths from~$u$ to any other vertex of the interval~$[u,v]$ forms a spanning tree of~$[u,v]$, rooted at and directed away from~$u$.
We call it the \defn{$\lambda$-source tree} of~$[u,v]$ and denote it by~$\sourceTree_\lambda([u,v])$.
Similarly, the union of all $\lambda$-rising paths from any vertex of the interval~$[u,v]$ to~$v$ forms a spanning tree of~$[u,v]$, rooted at and directed towards~$v$.
We call it the \defn{$\lambda$-sink tree} of~$[u,v]$ and denote it by~$\sinkTree_\lambda([u,v])$.
In particular, if~$G$ has a unique source and a unique sink, this provides two canonical spanning trees~$\sourceTree_\lambda(G)$ and~$\sinkTree_\lambda(G)$ for the graph~$G$ itself.
\end{remark}

\begin{example}[Cube]
\label{exm:cube}
Consider the $1$-skeleton~$\square_d$ of the $d$-dimensional cube~$[0,1]^d$, directed from~$\zero \eqdef (0,\dots,0)$ to~$\one \eqdef (1,\dots,1)$.
Its vertices are the elements of~$\{0,1\}^d$ and its edges are the pairs of vertices which differ in a unique position.
Note that~$\varepsilon \eqdef (\varepsilon_1, \dots, \varepsilon_d) \directedPath \varepsilon' \eqdef (\varepsilon'_1, \dots, \varepsilon'_d)$ if and only if~$\varepsilon_k \le \varepsilon'_k$ for all~$k \in [d]$.

For any edge $\varepsilon \edge \varepsilon'$ of~$\square_d$, let~$\lambda(\varepsilon \edge \varepsilon')$ denote the unique position in~$[d]$ where~$\varepsilon$ and~$\varepsilon'$ differ.
Then the map~$\lambda$ is an ER-labeling of~$\square_d$.
If~$\varepsilon \in \{0,1\}^d \ssm \zero$, then the father of~$\varepsilon$ in~$\sourceTree_\lambda(\square_d)$ is obtained from~$\varepsilon$ by changing its last~$1$ into a~$0$.
Similarly, if~${\varepsilon \in \{0,1\}^d \ssm \one}$, then the father of~$\varepsilon$ in~$\sinkTree_\lambda(\square_d)$ is obtained from~$\varepsilon$ by changing its first~$0$ into a~$1$.
See \fref{fig:cube}.

\begin{figure}[ht]
  \centerline{\includegraphics[width=\textwidth]{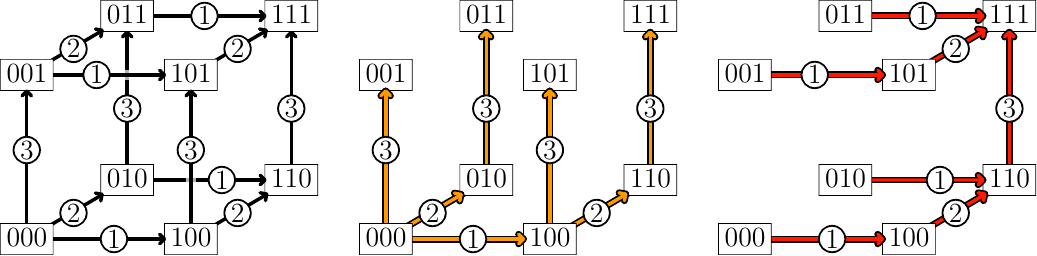}}
  \caption{An ER-labeling~$\lambda$ of the $1$-skeleton~$\square_3$ of the $3$-cube, the $\lambda$-source tree~$\sourceTree_\lambda(\square_3)$ and the $\lambda$-sink tree~$\sinkTree_\lambda(\square_3)$.}
  \label{fig:cube}
\end{figure}
\end{example}

\subsection{EL-labelings of graphs and posets}
\label{subsec:ELlabelings}

Although ER-labelings of graphs are sufficient to produce canonical spanning trees, we need the following extension for further properties.
The labeling~$\lambda : E \to \N$ is an \defn{edge lexicographic labeling} of~$G$ (or \defn{EL-labeling} for short) if for any vertices~$u,v \in V$ with~$u \directedPath v$,
\begin{enumerate}[(i)]
\item there is a unique $\lambda$-rising path~$p$ from~$u$ to~$v$, and 
\item its labeling~$\lambda(p)$ is lexicographically first among the labelings~$\lambda(p')$ of all paths~$p'$ from~$u$ to~$v$.
\end{enumerate}
For example, the ER-labeling of the $1$-skeleton of the cube presented in Example~\ref{exm:cube} is in fact an EL-labeling.

\enlargethispage{.2cm}
Remember now that one can associate a finite poset to a finite acyclic directed graph and \viceversa.
Namely,
\begin{enumerate}[(i)]
\item the \defn{transitive closure} of a finite acyclic directed graph~$G = (V,E)$ is the finite poset~${(V,\directedPath)}$;
\item the \defn{Hasse diagram} of a finite poset~$P$ is the finite acyclic directed graph whose vertices are the elements of~$P$ and whose edges are the \defn{cover relations} in $P$, \ie $u \edge v$ if~$u <_P v$ and there is no~$w \in P$ such that~$u <_P w <_P v$.
\end{enumerate}
The transitive closure of the Hasse diagram of~$P$ always coincides with~$P$, but the Hasse diagram of the transitive closure of~$G$ might also be only a subgraph of~$G$.
An \defn{EL-labeling} of the poset~$P$ is an EL-labeling of the Hasse diagram of~$P$.
If such a labeling exists, then the poset is called \defn{EL-shellable}.

As already mentioned, A.~Bj\"orner~\cite{Bjorner} originally introduced EL-labelings of finite posets to study topological properties of their order complex.
In particular, they provide a tool to compute the M\"obius function of the poset.
Recall that the \defn{M\"obius function} of the poset~$P$ is the map~$\mu : P \times P \to \Z$ defined recursively~by
$$\mu(u,v) \eqdef \begin{cases} 1 & \text{if } u = v, \\ -\sum_{u \le_P w <_P v} \mu(u,w) & \text{if } u <_P v, \\ 0 & \text{otherwise.} \end{cases}$$
When the poset is EL-shellable, this function can be computed as follows.

\begin{proposition}[\protect{\cite[Proposition~5.7]{BjornerWachs3}}]
\label{prop:Moebius}
Let~$\lambda$ be an EL-labeling of the poset~$P$.
For every~$u,v \in P$ with~$u \le_P v$, we have
$$\mu(u,v) = \even_\lambda(u,v) - \odd_\lambda(u,v),$$
where $\even_\lambda(u,v)$ (resp.~$\odd_\lambda(u,v)$) denotes the number of even (resp.~odd) length $\lambda$-falling paths from~$u$ to~$v$ in the Hasse diagram of~$P$.
\end{proposition}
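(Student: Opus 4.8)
The plan is to combine Philip Hall's classical formula for the Möbius function with a sign-reversing collapse of arbitrary chains onto paths, the collapse being dictated by the unique $\lambda$-rising path inside each subinterval. First, unwinding the defining recursion of $\mu$ gives Philip Hall's theorem:
\[ \mu(u,v) = \sum_{\mathfrak{y}} (-1)^{\ell(\mathfrak{y})}, \]
where $\mathfrak{y}$ ranges over \emph{all} chains $u = y_0 <_P y_1 <_P \cdots <_P y_k = v$, saturated or not, and $\ell(\mathfrak{y}) \eqdef k$ is the number of steps of $\mathfrak{y}$. Each such chain lies inside the interval $[u,v]$, and a $\lambda$-falling path from $u$ to $v$ in the Hasse diagram of $P$ is the same thing as a $\lambda$-falling path from $u$ to $v$ inside $[u,v]$. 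Since, by definition, $\even_\lambda(u,v) - \odd_\lambda(u,v) = \sum_p (-1)^{\ell(p)}$ where $p$ ranges over the $\lambda$-falling paths from $u$ to $v$, it suffices to prove that $\sum_{\mathfrak{y}} (-1)^{\ell(\mathfrak{y})} = \sum_p (-1)^{\ell(p)}$.

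To this end I would construct a retraction $\operatorname{ref}$ from the set of all chains from $u$ to $v$ onto the set of paths from $u$ to $v$. Given a chain $\mathfrak{y} : u = y_0 <_P \cdots <_P y_k = v$, property~(i) of the EL-labeling provides, for each $i \in \{1, \dots, k\}$, a unique $\lambda$-rising path $\rho_i$ from $y_{i-1}$ to $y_i$; set $\operatorname{ref}(\mathfrak{y})$ to be the concatenation $\rho_1 \rho_2 \cdots \rho_k$, which is again a path from $u$ to $v$ and contains all vertices of $\mathfrak{y}$. Since a single cover relation is its own unique $\lambda$-rising path, $\operatorname{ref}$ fixes every path and is thus a genuine retraction. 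The crux is then the following description of its fibers: for a path $\mathfrak{s} : u = z_0 \edge z_1 \edge \cdots \edge z_\ell = v$, let $\operatorname{Asc}(\mathfrak{s})$ be the set of positions $i \in \{1, \dots, \ell-1\}$ with $\lambda(z_{i-1} \edge z_i) < \lambda(z_i \edge z_{i+1})$, and for $T \subseteq \operatorname{Asc}(\mathfrak{s})$ let $\mathfrak{s}_T$ be the chain obtained from $\mathfrak{s}$ by deleting the vertices $z_i$ with $i \in T$. I claim that
\[ \operatorname{ref}^{-1}(\mathfrak{s}) = \set{\mathfrak{s}_T}{T \subseteq \operatorname{Asc}(\mathfrak{s})}, \]
that the chains $\mathfrak{s}_T$ are pairwise distinct, and that $\ell(\mathfrak{s}_T) = \ell - |T|$.

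Granting this claim, the fibers of $\operatorname{ref}$ partition the set of all chains from $u$ to $v$, so
\[ \mu(u,v) = \sum_{\mathfrak{s}} \sum_{T \subseteq \operatorname{Asc}(\mathfrak{s})} (-1)^{\ell(\mathfrak{s}) - |T|} = \sum_{\mathfrak{s}} (-1)^{\ell(\mathfrak{s})} \, (1-1)^{|\operatorname{Asc}(\mathfrak{s})|}, \]
both outer sums running over paths $\mathfrak{s}$ from $u$ to $v$; a term survives only when $\operatorname{Asc}(\mathfrak{s}) = \emptyset$, \ie when the label word $\lambda(\mathfrak{s})$ is weakly decreasing, \ie when $\mathfrak{s}$ is $\lambda$-falling, giving $\mu(u,v) = \sum_p (-1)^{\ell(p)} = \even_\lambda(u,v) - \odd_\lambda(u,v)$ with $p$ over the $\lambda$-falling paths. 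The single non-routine ingredient is the fiber description, and it is exactly there that the defining property of $\lambda$ is used: if $\mathfrak{y} \in \operatorname{ref}^{-1}(\mathfrak{s})$ then, as $\operatorname{ref}$ never removes vertices, $\mathfrak{y} = \mathfrak{s}_T$ for a unique $T \subseteq \{1, \dots, \ell-1\}$, and a maximal run $\{a, \dots, b\} \subseteq T$ of consecutive deleted positions --- flanked by retained vertices $z_{a-1}$ and $z_{b+1}$ --- is refined back by $\operatorname{ref}$ to the segment $z_{a-1} \edge z_a \edge \cdots \edge z_{b+1}$ precisely when that segment is the (necessarily unique) $\lambda$-rising path of $[z_{a-1}, z_{b+1}]$, hence, by uniqueness, precisely when $\lambda(z_{a-1} \edge z_a) < \cdots < \lambda(z_b \edge z_{b+1})$, \ie $\{a, \dots, b\} \subseteq \operatorname{Asc}(\mathfrak{s})$; running over all runs yields $\operatorname{ref}^{-1}(\mathfrak{s}) = \set{\mathfrak{s}_T}{T \subseteq \operatorname{Asc}(\mathfrak{s})}$, and the distinctness of the $\mathfrak{s}_T$ and the length count are immediate. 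Two closing remarks: only property~(i) is used here, so the statement holds already for ER-labelings of the Hasse diagram; and one may alternatively argue topologically, since an EL-labeling induces a (non-pure) shelling of the order complex of the open interval $(u,v)$, presenting it up to homotopy as a wedge of spheres with one copy of $S^{\ell(\mathfrak{m})-2}$ for each $\lambda$-falling path $\mathfrak{m}$ from $u$ to $v$, so that $\mu(u,v)$, the reduced Euler characteristic of that complex, equals $\sum_{\mathfrak{m}} (-1)^{\ell(\mathfrak{m})-2}$ over those paths $\mathfrak{m}$.
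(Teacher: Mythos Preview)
The paper does not prove this proposition at all: it is quoted verbatim from Bj\"orner--Wachs and used as a black box. Your argument is a correct, self-contained combinatorial proof, so there is nothing to compare against in the paper itself.

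A brief comment on content: your retraction $\operatorname{ref}$ and the fiber description are sound, and the key step --- that $\operatorname{ref}(\mathfrak{s}_T)=\mathfrak{s}$ if and only if every maximal run of $T$ sits inside a rising stretch of $\mathfrak{s}$, equivalently $T\subseteq\operatorname{Asc}(\mathfrak{s})$ --- is exactly where the uniqueness of the rising path enters. Your closing observation that only property~(i) is invoked (so the formula already holds for ER-labelings) is correct and worth keeping. The topological alternative you sketch at the end is in fact the route taken in the cited Bj\"orner--Wachs paper; your Hall-formula-plus-sign-reversing-retraction argument is more elementary and avoids any appeal to shellability or homotopy type, at the cost of the explicit bookkeeping on ascent sets.
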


\begin{example}[Cube]
\label{exm:cube2}
The directed $1$-skeleton~$\square_d$ of the $d$-dimensional cube~$[0,1]^d$ is the Hasse diagram of the boolean poset.
The edge labeling~$\lambda$ of~$\square_d$ of Example~\ref{exm:cube} is thus an EL-labeling of the boolean poset.
Moreover, for any two vertices~${\varepsilon \directedPath \varepsilon'}$ of~$\square_d$, there is a unique $\lambda$-falling path between~$\varepsilon$ and~$\varepsilon'$, whose length is the \defn{Hamming distance}~$\delta(\varepsilon,\varepsilon') \eqdef |\set{k \in [d]}{\varepsilon_k \ne \varepsilon'_k}|$.
The M\"obius function~is thus given by~$\mu(\varepsilon,\varepsilon') = (-1)^{\delta(\varepsilon,\varepsilon')}$.
In particular,~${\mu(\zero, \one) = (-1)^d}$.
\end{example}


\section{Subword complexes on Coxeter groups}
\label{sec:subwordComplexes}

\subsection{Coxeter systems}
\label{subsec:CoxeterSystems}

We recall some basic notions on Coxeter systems needed in this paper.
More background material can be found in~\cite{Humphreys}.

Let~$V$ be an $n$-dimensional Euclidean vector space.
For~$v \in V \ssm 0$, we denote by~$s_v$ the reflection interchanging~$v$ and~$-v$ while fixing pointwise the orthogonal hyperplane.
We consider a \defn{finite Coxeter group}~$W$ acting on~$V$, \ie a finite group generated by reflections.
We assume without loss of generality that the intersection of all reflecting hyperplanes of~$W$ is reduced to~$0$.

A \defn{root system} for~$W$ is a set~$\Phi$ of vectors stable under the action of~$W$ and containing precisely two opposite vectors orthogonal to each reflection hyperplane of~$W$.
Fix a linear functional~$f:V \to \R$ such that $f(\beta) \ne 0$ for all~$\beta \in \Phi$.
It splits the root system~$\Phi$ into the set of \defn{positive roots}~$\Phi^+ \eqdef \set{\beta \in \Phi}{f(\beta)>0}$ and the set of \defn{negative roots}~$\Phi^- \eqdef -\Phi^+$.
The \defn{simple roots} are the roots which lie on the extremal rays of the cone generated by~$\Phi^+$.
They form a basis~$\Delta$ of the vector space~$V$.
The \defn{simple reflections}~$S \eqdef \set{s_\alpha}{\alpha \in \Delta}$ generate the Coxeter group~$W$.
The pair~$(W,S)$ is a \defn{finite Coxeter system}.
For~$s \in S$, we let~$\alpha_s \in \Delta$ be the simple root orthogonal to the reflecting hyperplane of~$s$.

The \defn{length} of an element~$w \in W$ is the length~$\ell(w)$ of the smallest expression of~$w$ as a product of the generators in~$S$.
An expression~$w = s_1 \cdots s_p$, with $s_1, \dots, s_p \in S$, is \defn{reduced} if~$p = \ell(w)$.
The length of~$w$ is also known to be the cardinality of the \defn{inversion set} of~$w$, defined as the set~$\inv(w) \eqdef \Phi^+ \cap w(\Phi^-)$ of positive roots sent to negative roots by~$w^{-1}$.
Indeed, $\inv(w) = \{ \alpha_{s_1}, s_1(\alpha_{s_2}), \dots, s_1 \cdots s_{\ell-1}(\alpha_{s_\ell}) \}$ for any reduced expression~$w = s_1 \cdots s_\ell$ of~$w$.
The \defn{(right) weak order} is the partial order on~$W$ defined by~$u \le w$ if there exists~$v \in W$ with~$uv = w$ and~$\length(u)+\length(v)=\length(w)$.
In other words, $u \le v$ if and only if $\inv(u) \subseteq \inv(v)$.

\begin{example}[Type~$A$ --- Symmetric groups]
The symmetric group~$\fS_{n+1}$, acting on the linear hyperplane $\one^\perp \eqdef \set{x \in \R^{n+1}}{\dotprod{\one}{x} = 0}$ by permutation of the coordinates, is the reflection group of \defn{type~$A_n$}.
It is the group of isometries of the standard $n$-dimensional regular simplex $\conv \{e_1,\dots,e_{n+1}\}$.
Its reflections are the transpositions of~$\fS_{n+1}$ and the set~$\set{e_i-e_j}{i \neq j}$ is a root system for~$A_n$.
We can choose the linear functional~$f$ such that the simple reflections are the adjacent transpositions~$\tau_i \eqdef (i\;\;i+1)$, and the simple roots are the vectors~$e_{i+1}-e_i$.
\end{example}


\subsection{Subword complexes}
\label{subsec:subwordComplex}

We consider a finite Coxeter system~$(W,S)$, a word $\Q \eqdef \q_1\q_2 \cdots \q_m$ on the generators of~$S$, and an element~$\rho \in W$.
A.~Knutson and E.~Miller~\cite{KnutsonMiller-subwordComplex} define the \defn{subword complex}~$\subwordComplex$ to be the simplicial complex of those subwords of~$\Q$ whose complements contain a reduced expression for~$\rho$ as a subword.
A vertex of~$\subwordComplex$ is a position of a letter in~$\Q$.
We denote by~$[m] \eqdef \{1,2,\dots,m\}$ the set of positions in~$\Q$.
A facet of~$\subwordComplex$ is the complement of a set of positions which forms a reduced expression for~$\rho$ in~$\Q$.
We denote by~$\facets$ the set of facets of~$\subwordComplex$.
We write~$\rho \prec \Q$ when~$\Q$ contains a reduced expression of~$\rho$, \ie when~$\subwordComplex$ is non-empty.

\begin{example}
\label{exm:toto}
Consider the type~$A$ Coxeter group~$\fS_4$ generated by~$\{ \tau_1,\tau_2,\tau_3\}$.
Let $\Qex \eqdef \sq{\tau}_2\sq{\tau}_3\sq{\tau}_1\sq{\tau}_3\sq{\tau}_2\sq{\tau}_1\sq{\tau}_2\sq{\tau}_3\sq{\tau}_1$ and~$\rhoex \eqdef [4,1,3,2]$.
The reduced expressions of~$\rhoex$ are $\tau_2\tau_3\tau_2\tau_1$, $\tau_3\tau_2\tau_3\tau_1$, and $\tau_3\tau_2\tau_1\tau_3$.
Thus, the facets of the subword complex $\subwordComplex[\Qex,\rhoex]$ are given by $\{1, 2, 3, 5, 6\}$, $\{1, 2, 3, 6, 7\}$, $\{1, 2, 3, 7, 9\}$, $\{1, 3, 4, 5, 6\}$, $\{1, 3, 4, 6, 7\}$, $\{1, 3, 4, 7, 9\}$, $\{2, 3, 5, 6, 8\}$, $\{2, 3, 6, 7, 8\}$, $\{2, 3, 7, 8, 9\}$, $\{3, 4, 5, 6, 8\}$, $\{3, 4, 6, 7, 8\}$, and $\{3, 4, 7, 8, 9\}$.
Let $\Iex \eqdef \{1,3,4,7,9\}$ and $\Jex \eqdef \{3,4,7,8,9\}$ denote two facets of $\subwordComplex[\Qex,\rhoex]$.
We will use this example throughout this paper to illustrate further notions.
\end{example}

\begin{example}[Type~$A$ --- Primitive networks and pseudoline arrangements]
\label{exm:typeA}
For type~$A$ Coxeter systems, subword complexes can be visually interpreted using primitive networks.
A \defn{network}~$\cN$ is a collection of~$n+1$ horizontal lines (called \defn{levels}, and labeled from bottom to top), together with~$m$ vertical segments (called \defn{commutators}, and labeled from left to right) joining two different levels and such that no two of them have a common endpoint.
We only consider \defn{primitive} networks, where any commutator joins two consecutive levels.
See \fref{fig:network} (left).

\begin{figure}[b]
  \centerline{\includegraphics[width=.9\textwidth]{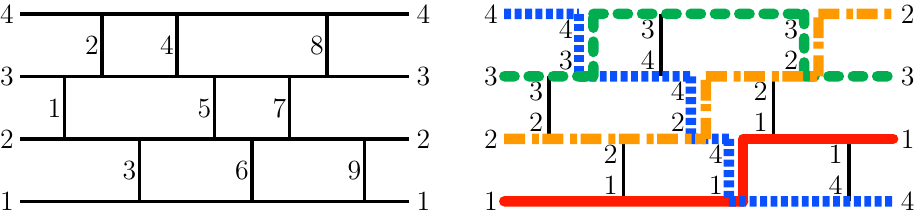}}
  \caption{The network~$\cN_{\Qex}$ (left) and the pseudoline arrangement~$\Lambda_{\Iex}$ for the facet~$\Iex = \{1,3,4,7,9\}$ of~$\subwordComplex[\Qex,\rhoex]$ (right).}
  \label{fig:network}
\end{figure}

A \defn{pseudoline} supported by the network~$\cN$ is an abscissa monotone path on~$\cN$.
A commutator of~$\cN$ is a \defn{crossing} between two pseudolines if it is traversed by both pseudolines, and a \defn{contact} if its endpoints are contained one in each pseudoline.
A \defn{pseudoline arrangement}~$\Lambda$ is a set of~$n+1$ pseudolines on~$\cN$, any two of which have at most one crossing, possibly some contacts, and no other intersection.
We label the pseudolines of~$\Lambda$ from bottom to top on the left of the network, and we define~$\pi(\Lambda) \in \fS_{n+1}$ to be the permutation given by the order of these pseudolines on the right of the network.
Note that the crossings of~$\Lambda$ correspond to the inversions of~$\pi(\Lambda)$.
See \fref{fig:network} (right).

Consider the type~$A$ Coxeter group~$\fS_{n+1}$ generated by~$S = \set{\tau_i}{i \in [n]}$, where~$\tau_i$ is the adjacent transposition~$(i\;\;i+1)$.
To a word~$\Q \eqdef \q_1\q_2 \cdots \q_m$ with~$m$ letters on~$S$, we associate a primitive network~$\cN_\Q$ with~$n+1$ levels and~$m$ commutators.
If~$\q_j = \sq{\tau}_p$, the $j$\ordinal{} commutator of~$\cN_\Q$ is located between the $p$\ordinal{} and $(p+1)$\ordinal{} levels of~$\cN_\Q$.
See \fref{fig:network} (left).
For~$\rho \in \fS_{n+1}$, a facet~$I$  of~$\subwordComplex$ corresponds to a pseudoline arrangement~$\Lambda_I$ supported by~$\cN_\Q$ and with~$\pi(\Lambda_I) = \rho$.
The positions of the contacts (resp.~crossings) of~$\Lambda_I$ correspond to the positions of~$I$ (resp.~of the complement of~$I$).
See \fref{fig:network} (right).
\end{example}

\begin{example}[Combinatorial models for geometric graphs]
\label{exm:geometricGraphs}
As pointed out in \cite{PilaudPocchiola}, pseudoline arrangements on primitive networks give combinatorial models for the following families of geometric graphs (see \fref{fig:geometricGraphs}):
\begin{enumerate}[(i)]
\item triangulations of convex polygons;
\item multitriangulations of convex polygons~\cite{PilaudSantos-multitriangulations};
\item pointed pseudotriangulations of points in general position in the plane~\cite{RoteSantosStreinu-survey};
\item pseudotriangulations of disjoint convex bodies in the plane~\cite{PocchiolaVegter}.
\end{enumerate}

\begin{figure}[b]
  \vspace*{-2pt}
  \centerline{\includegraphics[width=\textwidth]{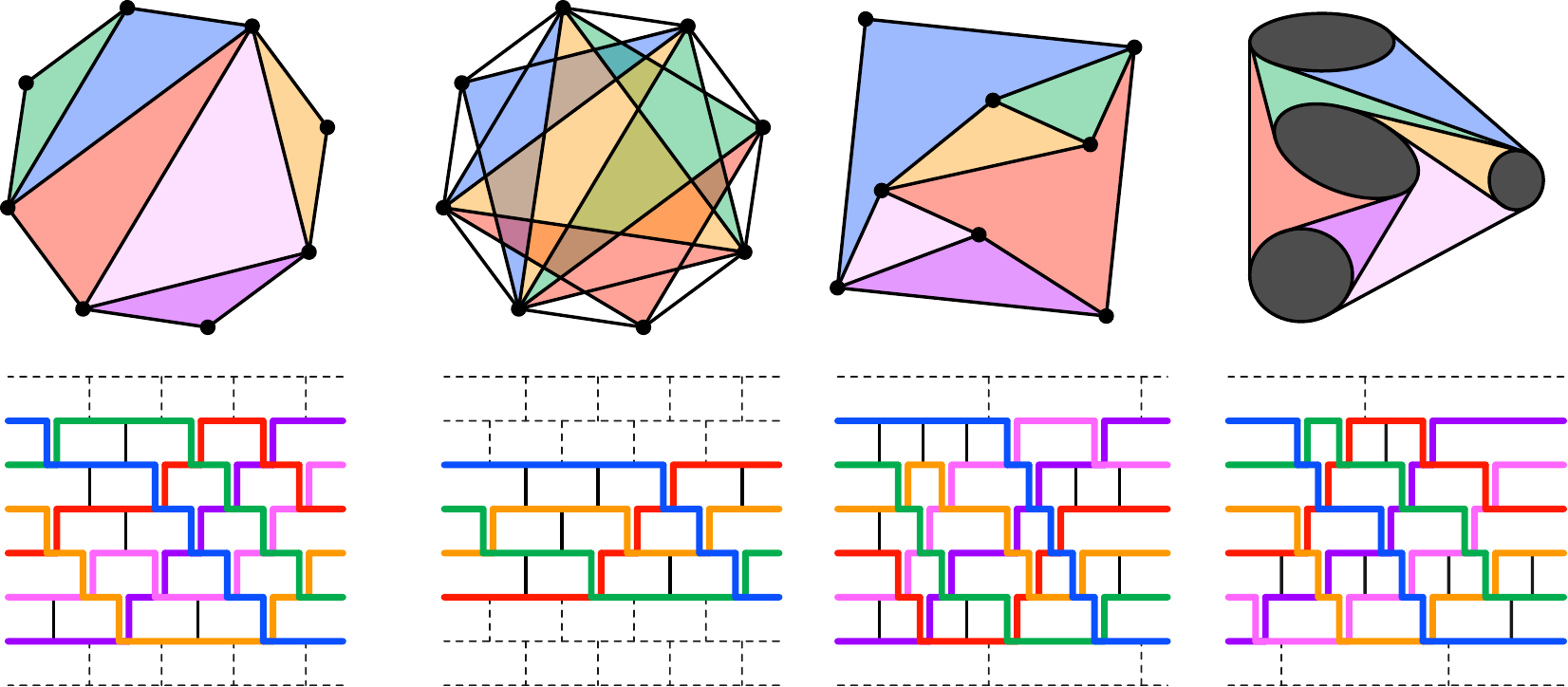}}
  \caption{Primitive sorting networks are combinatorial models for triangulations, multitriangulations, and pseudotriangulations of points or disjoint convex bodies.}
  \label{fig:geometricGraphs}
\end{figure}

For example, consider a triangulation~$T$ of a convex~$(n+3)$-gon.
Define the direction of a line of the plane to be the angle~$\theta \in [0,\pi)$ of this line with the horizontal axis.
Define also a bisector of a triangle~$\triangle$ to be a line passing through a vertex of~$\triangle$ and separating the other two vertices of~$\triangle$.
For any direction~$\theta \in [0,\pi)$, each triangle of~$T$ has precisely one bisector in direction~$\theta$.
We can thus order the~$n+1$ triangles of~$T$ according to the order~$\pi_\theta$ of their bisectors in direction~$\theta$.
The pseudoline arrangement associated to~$T$ is then given by the evolution of the order~$\pi_\theta$ when the direction~$\theta$ describes the interval~$[0,\pi)$.
A similar duality holds for the other three families of graphs, replacing triangles by the natural cells decomposing the geometric graph (stars for multitriangulations~\cite{PilaudSantos-multitriangulations}, or pseudotriangles for pseudotriangulations~\cite{RoteSantosStreinu-survey}).
See \fref{fig:geometricGraphs} for an illustration.
Details can be found in~\cite{PilaudPocchiola}.
\end{example}

\begin{remark}
\label{rem:reversal}
There is a natural reversal operation on subword complexes.
Namely,
$$\subwordComplex[\q_m \cdots \q_1, \rho^{-1}] = \set{\set{m+1-i}{i \in I}}{I \in \subwordComplex[\q_1 \cdots \q_m, \rho]}.$$
We will use this operation to relate positive and negative labelings, facets and trees.
\end{remark}


\subsection{Inductive structure}
\label{subsec:induction}

We denote by~$\Q_\eraseFirst \eqdef \q_2 \cdots \q_m$ and $\Q_\eraseLast \eqdef \q_1 \cdots \q_{m-1}$ the words on~$S$ obtained from $\Q \eqdef \q_1 \cdots \q_m$ by deleting its first and last letters, respectively.
We denote by $\shiftRight{X}$ the right shift $\set{x+1}{x \in X}$ of a subset~$X$ of~$\Z$.
For a collection~$\cX$ of subsets of~$\Z$, we write $\shiftRight{\cX}$ for the set $\set{\shiftRight{X}}{X \in \cX}$.
Moreover, we denote by $\cX \join z$ (or by $z \join \cX$) the join $\set{X \cup z}{X \in \cX}$ of~$\cX$ with some $z \in \Z$.
Remember that $\ell(\rho)$ denotes the length of~$\rho$ and that we write~$\rho \prec \Q$ when~$\Q$ contains a reduced expression of~$\rho$.

We can decompose inductively the facets of the subword complex~$\subwordComplex$ depending on whether or not they contain the last letter of~$\Q$.
Denoting by~$\varepsilon$ the empty word and by~$e$ the identity of~$W$, we have~$\facets[\varepsilon, e] = \{\varnothing\}$ and~$\facets[\varepsilon, \rho] = \varnothing$ if~$\rho \ne e$.
Moreover, for a non-empty word~$\Q$ on~$S$, the set~$\facets$ is given by
\begin{enumerate}[(i)]
\item $\facets[\Q_\eraseLast, \rho q_m]$ if~$m$ appears in none of the facets of~$\subwordComplex$ (\ie if~$\rho \not\prec \Q_\eraseLast$);
\item $\facets[\Q_\eraseLast, \rho] \join m$ if~$m$ appears in all the facets of~$\subwordComplex$ (\ie if~$\length(\rho q_m) > \length(\rho)$);
\item $\facets[\Q_\eraseLast,\rho q_m] \, \sqcup \, \big( \facets[\Q_\eraseLast,\rho] \join m \big)$ otherwise.
\end{enumerate}
By reversal (see Remark~\ref{rem:reversal}), there is also a similar inductive decomposition of the facets of the subword complex~$\subwordComplex$ depending on whether or not they contain the first letter of~$\Q$.
Namely, for a non-empty word~$\Q$, the set~$\facets$ is given by
\begin{enumerate}[(i)]
\item $\shiftRight{\facets[\Q_\eraseFirst, q_1 \rho]}$ if~$1$ appears in none of the facets of~$\subwordComplex$ (\ie if~$\rho \not\prec \Q_\eraseFirst$);
\item $1 \join \shiftRight{\facets[\Q_\eraseFirst, \rho]}$ if~$1$ appears in all the facets of~$\subwordComplex$ (\ie if~$\length(q_1\rho) > \length(\rho)$);
\item $\shiftRight{\facets[\Q_\eraseFirst,q_1\rho]} \, \sqcup \, \big( 1 \join \shiftRight{\facets[\Q_\eraseFirst,\rho]} \big)$ otherwise.
\end{enumerate}
Although we will only use these decompositions for the facets~$\facets$, they extend to the whole subword complex~$\subwordComplex$ and are used to obtain the following result.

\begin{theorem}[\protect{\cite[Corollary~3.8]{KnutsonMiller-subwordComplex}}]
\label{theo:KnutsonMiller}
The subword complex~$\subwordComplex$ is either a simplicial sphere or a simplicial ball.
\end{theorem}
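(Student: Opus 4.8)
The plan is to prove the slightly stronger statement that $\subwordComplex$ is \emph{vertex-decomposable}, proceeding by induction on the length~$m$ of~$\Q$, and then to conclude via the classical fact that a shellable pure complex in which every codimension-one face lies in at most two facets is a ball or a sphere.

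For the base case, $\subwordComplex[\varepsilon,e] = \{\varnothing\}$ is a $(-1)$-sphere and $\subwordComplex[\varepsilon,\rho]$ is void for $\rho\ne e$. For the inductive step with $\Q$ non-empty, I single out the position~$m$ of the last letter of~$\Q$ and use the inductive decomposition of~$\facets$ recalled above, together with purity of subword complexes, to compute the link and deletion of~$m$ in~$\Delta \eqdef \subwordComplex$:
\begin{enumerate}[(i)]
\item if $\rho \not\prec \Q_\eraseLast$, then every reduced subword of~$\Q$ for~$\rho$ uses position~$m$, so~$m$ is not a vertex of~$\Delta$ and $\Delta = \subwordComplex[\Q_\eraseLast, \rho\q_m]$;
\item if $\rho \prec \Q_\eraseLast$ and $\length(\rho\q_m) > \length(\rho)$, then~$m$ lies in every facet, so $\Delta = \{m\} \join \subwordComplex[\Q_\eraseLast, \rho]$ is a cone over a subword complex;
\item otherwise ($\rho \prec \Q_\eraseLast$ and $\length(\rho\q_m) < \length(\rho)$), one gets $\mathrm{link}(\Delta, m) = \subwordComplex[\Q_\eraseLast, \rho]$ and $\mathrm{del}(\Delta, m) = \subwordComplex[\Q_\eraseLast, \rho\q_m]$.
\end{enumerate}
In each case the smaller complexes appearing are again subword complexes, hence vertex-decomposable by induction; in case~(ii) one uses that a cone over a vertex-decomposable complex is vertex-decomposable.

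The only delicate point is that in case~(iii) the vertex~$m$ must be a \emph{shedding vertex}, i.e. no facet of $\mathrm{link}(\Delta,m)$ is a facet of $\mathrm{del}(\Delta,m)$; equivalently, every facet~$F$ of $\subwordComplex[\Q_\eraseLast,\rho]$ is a codimension-one face of $\subwordComplex[\Q_\eraseLast,\rho\q_m]$ lying in exactly one of its facets. Here the strong exchange condition is the essential input: $\Q_\eraseLast \ssm F$ is a reduced word for~$\rho$, and since $\length(\rho\q_m) < \length(\rho)$, there is exactly one letter whose deletion from this reduced word yields a reduced word for $\rho\q_m$, namely the one singled out by the uniqueness clause of the strong exchange condition applied to the reflection~$\q_m$. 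Running the same argument on an arbitrary codimension-one face of any subword complex shows that every such face lies in exactly one or two facets, so subword complexes are pure pseudomanifolds, with or without boundary; I would record this separately.

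To conclude, recall that vertex-decomposable complexes are shellable (Provan--Billera). A shellable pure $d$-dimensional complex in which every $(d-1)$-face lies in at most two facets is, by the theorem of Danaraj--Klee, a simplicial $d$-ball or a simplicial $d$-sphere, the sphere case occurring exactly when every $(d-1)$-face lies in two facets. Alternatively, and tracking the dichotomy more explicitly, one can carry the induction out topologically: case~(i) reduces to a shorter word, case~(ii) produces a cone and hence a ball, and in case~(iii) one writes $\Delta = B \cup_L \operatorname{cone}(L)$, where $B = \mathrm{del}(\Delta,m)$ is a ball (its boundary contains the nonempty subcomplex $L = \mathrm{link}(\Delta,m)$, so $B$ is not a sphere) glued to a cone along the subcomplex~$L$ of~$\partial B$; this is a sphere if $L = \partial B$ and a ball otherwise. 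The main obstacle is therefore not topological but bookkeeping: verifying the shedding condition via the uniqueness in the strong exchange property, and handling the degenerate cases where the complex is void or a single point.
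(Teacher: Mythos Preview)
The paper does not actually give a proof of this theorem: it is quoted as \cite[Corollary~3.8]{KnutsonMiller-subwordComplex}, and the surrounding text only remarks that the inductive decompositions of Section~\ref{subsec:induction} ``extend to the whole subword complex~$\subwordComplex$ and are used to obtain'' the result. There is thus no proof in the paper to compare against.

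That said, your proposal is correct and is precisely the argument of Knutson and Miller. Vertex-decomposability follows from the link/deletion computation at the last position, and your case analysis matches the trichotomy of Section~\ref{subsec:induction}. Two small comments. First, the shedding condition in case~(iii) is actually automatic from purity: the link $\subwordComplex[\Q_\eraseLast,\rho]$ is pure of dimension one less than the deletion $\subwordComplex[\Q_\eraseLast,\rho q_m]$, so no facet of the former can be a facet of the latter; the strong exchange condition is only needed, as you note separately, for the pseudomanifold property (each ridge in at most two facets). Second, in your topological alternative for case~(iii), the claim that $B = \mathrm{del}(\Delta,m)$ is a ball (not a sphere) does require the observation you make implicitly: every facet of $L = \mathrm{link}(\Delta,m)$ lies in exactly one facet of $B$ by strong exchange, so $L$ is a nonempty subcomplex of $\partial B$, forcing $B$ to be a ball. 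With these points checked, both the shellability/Danaraj--Klee route and the direct topological induction go through, exactly as in the original reference.
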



\subsection{Flips and roots}
\label{subsec:flips&roots}

Let~$I$ be a facet of~$\subwordComplex$ and~$i$ be a position in~$I$.
If there exists a facet~$J$ of $\subwordComplex$ and a position~$j \in J$ such that~$I \ssm i = J \ssm j$, we say that $I$~and~$J$ are \defn{adjacent} facets, that~$i$ is \defn{flippable} in~$I$, and that~$J$ is obtained from~$I$ by \defn{flipping}~$i$.
Note that, if they exist,~$J$ and~$j$ are unique by Theorem~\ref{theo:KnutsonMiller}.
We say that the flip from $I$ to $J$ \defn{flips out} $i$ and \defn{flips in} $j$.

We denote by~$\flipGraph$ the graph of flips, whose vertices are the facets of~$\subwordComplex$ and whose edges are pairs of adjacent facets.
That is, $\flipGraph$~is the ridge graph of the simplicial complex~$\subwordComplex$.
This graph is connected according to Theorem~\ref{theo:KnutsonMiller}.

This graph can be naturally oriented by the direction of the flips as follows.
Let~$I$ and~$J$ be two adjacent facets of~$\subwordComplex$ with~$I \ssm i = J \ssm j$.
We say that the flip from~$I$ to~$J$ is \defn{increasing} if~$i<j$.
We consider the flip graph $\flipGraph$ oriented by increasing flips.

\begin{example}
\fref{fig:flipGraph} represents the increasing flip graph~$\flipGraph[\Qex,\rhoex]$ for the subword complex~$\subwordComplex[\Qex, \rhoex]$ of Example~\ref{exm:toto}.
The facets of~$\subwordComplex[\Qex,\rhoex]$ appear in lexicographic order from left to right.
Thus, all flips are increasing from left~to~right.

\begin{figure}
  \centerline{\includegraphics[width=\textwidth]{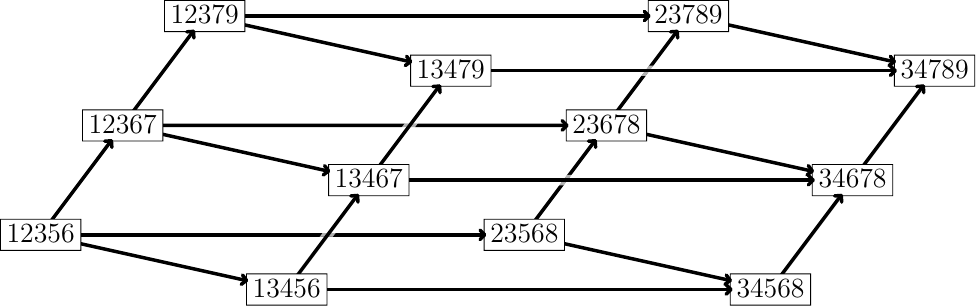}}
  \caption{The increasing flip graph~$\flipGraph[\Qex,\rhoex]$.}
  \label{fig:flipGraph}
\end{figure}
\end{example}

\begin{remark}
The increasing flip graph of~$\subwordComplex$ was already considered by A. Knutson and E.~Miller~\cite[Remark~4.5]{KnutsonMiller-subwordComplex}.
It carries various combinatorial informations about the subword complex~$\subwordComplex$.
In particular, since the lexicographic ordering of the facets of~$\subwordComplex$ is a shelling order for~$\subwordComplex$, the $h$-vector of the subword complex~$\subwordComplex$ is the in-degree sequence of the increasing flip graph~$\flipGraph$.
\end{remark}

Throughout the paper, we consider flips as elementary operations on subword complexes.
In practice, the necessary information to perform flips in a facet~$I$ of~$\subwordComplex$ is encoded in its root function~$\Root{I}{\cdot} : [m] \to \Phi$ defined by
$$\Root{I}{k} \eqdef \wordprod{\Q}{[k-1] \ssm I}(\alpha_{q_k}),$$
where~$\wordprod{\Q}{X}$ denotes the product of the reflections~$q_x \in \Q$ for~$x \in X$.
The \defn{root configuration} of the facet~$I$ is the multiset~$\Roots{I} \eqdef \multiset{\Root{I}{i}}{i \text{ flippable in } I}$.
The root function was introduced by C.~Ceballos, J.-P.~Labb\'e and C.~Stump~\cite{CeballosLabbeStump}, and we extensively studied root configurations in~\cite{PilaudStump} in the construction of brick polytopes for spherical subword complexes.
The main properties of the root function are summarized in the following proposition, whose proof is similar to that of~\cite[Lemmas~3.3 and 3.6]{CeballosLabbeStump} or~\cite[Lemma~3.3]{PilaudStump}.

\begin{proposition}
\label{prop:roots&flips}
Let~$I$ be any facet of the subword complex~$\subwordComplex$.
\begin{enumerate}
\item
\label{prop:roots&flips:inversions}
The map~$\Root{I}{\cdot}:i \mapsto \Root{I}{i}$ is a bijection from the complement of~$I$ to the inversion set of~$\rho$.

\item
\label{prop:roots&flips:flippable}
The map~$\Root{I}{\cdot}$ sends the flippable positions in~$I$ to~$\set{\pm \beta}{\beta \in \inv(\rho)}$ and the unflippable ones to~$\Phi^+ \ssm \inv(\rho)$.

\item
\label{prop:roots&flips:flip}
If $I$~and~$J$ are two adjacent facets of~$\subwordComplex$ with~$I \ssm i = J \ssm j$, the position $j$ is the unique position in the complement of~$I$ for which~${\Root{I}{j} \in \{\pm\Root{I}{i}}\}$.

\item
\label{prop:roots&flips:increasing}
In the situation of\;\eqref{prop:roots&flips:flip}, we have~${\Root{I}{i} = \Root{I}{j} \in \Phi^+}$ if~$i < j$ (increasing flip), while ${\Root{I}{i} = -\Root{I}{j} \in \Phi^-}$ if~$i > j$ (decreasing flip).

\item
\label{prop:roots&flips:update}
In the situation of\;\eqref{prop:roots&flips:flip}, the map~$\Root{J}{\cdot}$ is obtained from the map~$\Root{I}{\cdot}$ by:
$$\Root{J}{k} = \begin{cases} s_{\Root{I}{i}}(\Root{I}{k}) & \text{if } \min(i,j) < k \le \max(i,j), \\ \Root{I}{k} & \text{otherwise.} \end{cases}$$
\end{enumerate}
\end{proposition}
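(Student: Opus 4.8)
The plan is to establish the five statements essentially in order, relying on induction on the length of the word~$\Q$ via the inductive decomposition of facets from Section~\ref{subsec:induction}, together with direct manipulations of the word products~$\wordprod{\Q}{X}$. First I would record the basic fact that, if $I$ is a facet of $\subwordComplex$, then $\wordprod{\Q}{[m] \ssm I}$ is a reduced expression for~$\rho$; this is just the definition of a facet. Statement~\eqref{prop:roots&flips:inversions} then follows from the description of the inversion set recalled in Section~\ref{subsec:CoxeterSystems}: writing the complement of $I$ as $k_1 < \cdots < k_{\length(\rho)}$, the reduced word $q_{k_1} \cdots q_{k_{\length(\rho)}}$ expresses~$\rho$, and the formula $\inv(\rho) = \{\alpha_{q_{k_1}},\, q_{k_1}(\alpha_{q_{k_2}}),\, \dots\}$ matches exactly the values $\Root{I}{k_1}, \Root{I}{k_2}, \dots$, since for $k = k_a$ the set $[k-1] \ssm I$ equals $\{k_1, \dots, k_{a-1}\}$. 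Injectivity and surjectivity onto $\inv(\rho)$ come from the fact that the inversions of a reduced word are distinct positive roots.

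Next, for~\eqref{prop:roots&flips:flippable}, I would argue that a position $i \in I$ is flippable precisely when there is some $j$ in the complement of $I$ with $\Root{I}{j} \in \{\pm\Root{I}{i}\}$, which is the content one needs to extract from the exchange condition on reduced words (inserting $q_i$ into the reduced word for~$\rho$ must allow deletion of exactly one other letter). Thus a position $i \in I$ is flippable iff $\Root{I}{i} \in \{\pm\beta : \beta \in \inv(\rho)\}$, and unflippable positions in $I$ land in $\Phi^+ \ssm \inv(\rho)$ because $\Root{I}{i}$ is always a positive root when $i \in I$ (no reflection among $q_{k_1}, \dots$ before position $i$ is ``used up''). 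Statement~\eqref{prop:roots&flips:flip} is then the uniqueness clause of this exchange argument combined with Theorem~\ref{theo:KnutsonMiller}, which guarantees $j$ exists and is unique. For~\eqref{prop:roots&flips:increasing}, I would compute directly: if $i < j$ then $[j-1] \ssm I = ([i-1] \ssm I) \cup \{i\} \cup (\text{positions strictly between } i \text{ and } j, \text{ none in } I)$, and since no position of $J$'s complement lies strictly between $i$ and $j$ other than... — more cleanly, one uses that $\Root{I}{i}$ and $\Root{I}{j}$ differ by conjugation by the product of intermediate reflections, which acts trivially here, forcing $\Root{I}{j} = \pm\Root{I}{i}$; the sign is positive exactly when $i < j$ because then $\Root{I}{j}$ is still a positive root (position $j \notin I$ but $i \notin [j-1]\ssm I$... ) — this sign bookkeeping is where I would be most careful.

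Finally, for the update formula~\eqref{prop:roots&flips:update}, the key observation is that $[k-1] \ssm J$ and $[k-1] \ssm I$ coincide unless $\min(i,j) < k \le \max(i,j)$, in which case one of them contains $\min(i,j)$ and the other $\max(i,j)$ in place of it; since $q_i$ and $q_j$ are related through the reduced word for $\rho$ by $\wordprod{\Q}{[\min(i,j)\ldots\max(i,j)-1]\ssm I}$, multiplying by this product and using $s_{\Root{I}{i}} = \wordprod{\Q}{[i-1]\ssm I} \, s_{q_i} \, \wordprod{\Q}{[i-1]\ssm I}^{-1}$ gives the stated conjugation. I expect the main obstacle to be the careful sign and index tracking in parts~\eqref{prop:roots&flips:increasing} and~\eqref{prop:roots&flips:update}: one must pin down precisely which positions between $i$ and $j$ belong to $I$ (namely all of them, since $I \ssm i = J \ssm j$ and $i, j$ are consecutive in the complement restricted to that range), and then verify the reflection identity. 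The other statements are essentially direct translations of standard facts about reduced words, so the proof can reasonably be said to be ``similar to'' the cited lemmas of \cite{CeballosLabbeStump} and \cite{PilaudStump}.
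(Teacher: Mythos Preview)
Your overall strategy is the right one and matches the references the paper cites (the paper itself gives no proof beyond that citation), but two of the factual claims you lean on are false and would break the argument as written.

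In part~\eqref{prop:roots&flips:flippable} you assert that $\Root{I}{i}$ is always a positive root when $i \in I$. It is not: whenever $i \in I$ admits a \emph{decreasing} flip, $\Root{I}{i}$ is negative (this is exactly the second clause of part~\eqref{prop:roots&flips:increasing}). The correct reasoning runs the other way. If $\Root{I}{i} \in \Phi^-$, then $-\Root{I}{i}$ lies in the inversion set of the prefix $u \eqdef \wordprod{\Q}{[i-1]\ssm I}$, hence equals $\Root{I}{k}$ for some $k < i$ in the complement of~$I$, so $i$ is flippable. If $\Root{I}{i} \in \inv(\rho)$, then by part~\eqref{prop:roots&flips:inversions} it equals $\Root{I}{k}$ for some $k$ in the complement, necessarily with $k > i$ since $\Root{I}{i} \notin \inv(u)$, so again $i$ is flippable. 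Only the remaining case $\Root{I}{i} \in \Phi^+ \ssm \inv(\rho)$ is unflippable. This same dichotomy is what settles the sign in part~\eqref{prop:roots&flips:increasing}, which you leave unresolved: for $i<j$ one has $\Root{I}{j} \in \Phi^+$ (since $j \notin I$), and $\Root{I}{i} = -\Root{I}{j}$ would force $\Root{I}{j} \in \inv(u)$, contradicting injectivity on the complement.

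In part~\eqref{prop:roots&flips:update} you claim that all positions strictly between $i$ and $j$ belong to~$I$. This is false in general (e.g.\ in type~$A_2$ with $\Q = \sq{s_1 s_2 s_1 s_2 s_1}$, $\rho = w_\circ$, $I = \{1,2\}$, flipping position~$2$ gives $j = 5$, yet $3,4 \notin I$). The update formula does not need it. Assuming $i<j$, for $i < k \le j$ the set $[k-1]\ssm J$ is $([k-1]\ssm I) \cup \{i\}$, so
\[
\wordprod{\Q}{[k-1]\ssm J} \;=\; u\,q_i\,u^{-1}\cdot \wordprod{\Q}{[k-1]\ssm I} \;=\; s_{\Root{I}{i}}\cdot \wordprod{\Q}{[k-1]\ssm I},
\]
regardless of how many complement positions sit between $i$ and $j$. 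For $k > j$ the extra factor $s_{\Root{I}{i}}$ is cancelled precisely because $s_{\Root{I}{i}} = s_{\Root{I}{j}}$ (part~\eqref{prop:roots&flips:flip}), not because the intermediate range is empty.
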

We call $\Root{I}{i} = -\Root{J}{j}$ the \defn{direction} of the flip from the facet $I$ to the facet $J$.

\begin{example}
In type~$A$, roots and flips can easily be described using the primitive network interpretation presented in Example~\ref{exm:typeA}.
Consider a word~$\Q$ on the simple reflections~$\set{\tau_i}{i \in [n]}$, an element~$\rho \in \fS_{n+1}$, and a facet~$I$ of~$\subwordComplex$.
For any~$k \in [m]$, the root~$\Root{I}{k}$ is the difference~$e_t-e_b$ where~$t$ and~$b$ are the indices of the pseudolines of~$\Lambda_I$ which arrive respectively on the top and bottom endpoints of the $k$\ordinal{} commutator of~$\cN_\Q$.
A flip exchanges a contact between two pseudolines~$t$ and~$b$ of~$\Lambda_I$ with the unique crossing between~$t$ and~$b$ in~$\Lambda_I$ (when it exists).
Such a flip is increasing if the contact lies before the crossing, \ie if~$t > b$.
\fref{fig:flip} illustrates the properties of Proposition~\ref{prop:roots&flips} on the subword complex~$\subwordComplex[\Qex, \rhoex]$ of Example~\ref{exm:toto}.

\begin{figure}[ht]
  \centerline{\includegraphics[width=.9\textwidth]{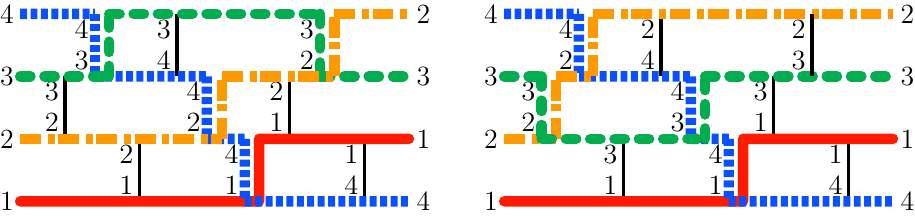}}
  \caption{The increasing flip from facet~$\Iex = \{1,3,4,7,9\}$ to facet~$\Jex = \{3,4,7,8,9\}$ of the subword complex~$\subwordComplex[\Qex,\rhoex]$, illustrated on the network~$\cN_{\Qex}$.}
  \label{fig:flip}
\end{figure}
\end{example}


\subsection{Restriction of subword complexes to parabolic subgroups}
\label{subsec:restriction}

In the proof of our main result, we will need to restrict subword complexes to dihedral parabolic subsystems.
The following statement can essentially be found in~\cite[Proposition~3.7]{PilaudStump}, we provide a proof here as well for the sake of completeness.

\begin{proposition}[\protect{\cite[Proposition~3.7]{PilaudStump}}]
\label{prop:restriction}
Let~$\subwordComplex$ be a subword complex for a Coxeter system $(W,S)$ acting on~$V$, and let $V' \subseteq V$ be a subspace of $V$.
The simplicial complex given by all facets $J$ of $\subwordComplex$ reachable from a particular facet~$I$ by flips whose directions are contained in $V'$ is isomorphic to a subword complex $\subwordComplex[\Q',\rho']$ for the restriction of $(W,S)$ to $V'$.
The order of the letters is preserved by this isomorphism.
In particular, the restriction of the increasing flip graph $\flipGraph$ to these reachable facets is isomorphic to the increasing flip graph~$\flipGraph[\Q',\rho']$.
\end{proposition}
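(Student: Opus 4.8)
The plan is to locate, within the root function of the given facet~$I$, exactly the positions and roots that ``see'' the subspace~$V'$, and to read off~$(\Q',\rho')$ from that data. Since every flip direction is a root, one may first replace~$V'$ by $\vect(\Phi\cap V')$. Put $\Phi' \eqdef \Phi\cap V'$; then $W' \eqdef \langle s_\beta \mid \beta\in\Phi'\rangle$ preserves~$V'$ and fixes $(V')^\perp$ pointwise, so it is a finite group generated by reflections acting on~$V'$, hence a finite Coxeter group in the sense of Section~\ref{subsec:CoxeterSystems}, with root system~$\Phi'$. Restricting~$f$ gives the positive roots $\Phi'^+ \eqdef \Phi'\cap\Phi^+$, simple roots~$\Delta'$ on the extremal rays of the cone generated by~$\Phi'^+$, and simple reflections $S' \eqdef \set{s_\alpha}{\alpha\in\Delta'}$; this $(W',S')$ is the restriction of $(W,S)$ to~$V'$. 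Now fix~$I$ and set $K \eqdef \set{k\in[m]}{\Root{I}{k}\in\Phi'}$. If $I\ssm i = J\ssm j$ is a flip with direction in~$V'$, then $\Root{I}{i} = \pm\Root{I}{j}\in\Phi'$, so $i,j\in K$ and $I,J$ differ only inside~$K$; moreover, by Proposition~\ref{prop:roots&flips}\eqref{prop:roots&flips:update} the roots $\Root{J}{k}$ are obtained from the $\Root{I}{k}$ either by the identity or by~$s_{\Root{I}{i}}$, and this last reflection, having direction in~$\Phi'$, preserves~$V'$ and fixes $(V')^\perp$ pointwise, hence preserves both~$\Phi'$ and $\Phi\ssm\Phi'$. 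Iterating, every facet~$J$ reachable from~$I$ by $V'$-flips satisfies $\set{k}{\Root{J}{k}\in\Phi'} = K$ and $J\ssm K = I\ssm K$; in particular the positions outside $I\cup K$ belong to no reachable facet, and the complex~$\mathcal{R}$ of reachable facets is, position by position in increasing order, a complex on the positions of $I\cup K$ all of whose facets contain the fixed set $I\ssm K$.

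Next I would build~$(\Q',\rho')$. By Proposition~\ref{prop:roots&flips}\eqref{prop:roots&flips:inversions} the map $\Root{I}{\cdot}$ restricts to a bijection from $[m]\ssm I$ onto $\inv(\rho)$, and $\inv(\rho)\cap\Phi'$ is the inversion set of a unique element $\rho'\in W'$ (the biconvexity characterizing an inversion set is inherited by intersection with a root subsystem). The subsequence of the inversion sequence $\big(\Root{I}{j}\big)_{j\in[m]\ssm I}$ formed by the roots lying in~$\Phi'$ is then the inversion sequence of a reduced word for~$\rho'$ in~$W'$, which attaches a simple reflection of~$W'$ to each position of $([m]\ssm I)\cap K$. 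To each remaining position~$p$ of $I\cup K$ one attaches a letter $\q'_p\in S'$ chosen so that, in the word thus obtained, the facet corresponding to~$I$ has root function agreeing with $\Root{I}{\cdot}$ along~$K$ when $p\in I\cap K$, and so that~$p$ lies in every facet when $p\in I\ssm K$. That such letters exist --- equivalently, that the root function of~$\subwordComplex$ read along any set of positions whose roots lie in a root subsystem~$\Phi'$ is again the root function of a facet of a subword complex over $(W',S')$ --- is the technical heart of the statement; it is proved exactly as in~\cite[Proposition~3.7]{PilaudStump}, combining Proposition~\ref{prop:roots&flips}\eqref{prop:roots&flips:flippable} applied inside~$W'$ with the description of the covers of the weak order of~$W'$. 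Indexing $\Q'$ by the positions of $I\cup K$ in increasing order and taking the~$\rho'$ above, the facet~$I$ corresponds to a facet~$I'$ of $\subwordComplex[\Q',\rho']$ with the same root function along~$K$.

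Finally, since the root functions of~$I$ (along~$K$) and of~$I'$ coincide, Proposition~\ref{prop:roots&flips}\eqref{prop:roots&flips:flippable}--\eqref{prop:roots&flips:increasing} show that a position of~$K$ is flippable in~$I$ with direction in~$V'$ precisely when the corresponding position is flippable in~$I'$, with equal direction; and Proposition~\ref{prop:roots&flips}\eqref{prop:roots&flips:update}, with the invariance of~$K$, shows that corresponding flips lead to facets whose root functions again coincide along~$K$ (the part of~$\subwordComplex$'s root function indexed outside~$K$ evolving independently). Since both flip graphs are connected (Theorem~\ref{theo:KnutsonMiller}), propagating this identification along flips turns the position-preserving bijection $J\mapsto (I\ssm K)\cup(J\cap K)$ from~$\mathcal{R}$ onto $\facets[\Q',\rho']$ into an isomorphism of simplicial complexes carrying the increasing flip graph of~$\subwordComplex$ restricted to~$\mathcal{R}$ onto $\flipGraph[\Q',\rho']$, as claimed, and the letter order is preserved by construction.

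The only non-formal point --- and the main obstacle --- is the existence of the letters~$\q'_p$ at the positions of~$I$, \ie the realizability of the restricted root function as a subword-complex root function over~$W'$. This is where one truly uses the reflection-subgroup bookkeeping in~$W'$ (biconvexity of $\inv(\rho)\cap\Phi'$ and the structure of weak-order covers), and it is precisely the content of~\cite[Proposition~3.7]{PilaudStump}. The rest --- the invariance of~$K$ and the transfer of the flip structure --- follows directly from Proposition~\ref{prop:roots&flips} and Theorem~\ref{theo:KnutsonMiller}; an inductive approach via the first-letter decomposition of Section~\ref{subsec:induction} together with the reversal of Remark~\ref{rem:reversal} is also possible, but requires more care with the degenerate cases where $\rho'$ is the longest element of~$W'$.
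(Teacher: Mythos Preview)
Your overall strategy coincides with the paper's: isolate the set~$K$ of positions whose roots lie in~$\Phi'$, define~$\rho'$ via $\inv(\rho)\cap\Phi'$, build~$\Q'$ letter by letter so that the root function of the corresponding facet~$I'$ reproduces~$\Root{I}{\cdot}$ along~$K$, and then transport flips via Proposition~\ref{prop:roots&flips}. The paper does exactly this, and your treatment of the invariance of~$K$ under $V'$-flips and of the flip transfer is correct.

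There is, however, a genuine error in your construction of~$\Q'$. You index~$\Q'$ by all of~$I\cup K$ and demand that each position $p\in I\ssm K$ ``lie in every facet'' of $\subwordComplex[\Q',\rho']$. But the letters~$\q'_p$ must come from~$S'$, and there is in general no choice of simple reflection in~$W'$ producing a cone point at that spot. Concretely, take $W=A_2$, $\Q=\sq{s_2s_1s_1s_2s_1}$, $\rho=w_\circ$, $I=\{1,2\}$, and $V'=\R\alpha_1$. Then $K=\{2,3\}$, $I\ssm K=\{1\}$, $W'$ has rank~$1$ with $S'=\{s'_1\}$ and $\rho'=s'_1$; the only possible word is $\Q'=\sq{s'_1s'_1s'_1}$, whose facet $\{2,3\}$ does not contain position~$1$. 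Even more bluntly, if $\Phi\cap V'=\varnothing$ then $S'=\varnothing$ and no letter at all can be assigned to the positions of~$I\ssm K$. Your appeal to~\cite[Proposition~3.7]{PilaudStump} does not rescue this, since that construction does not include~$I\ssm K$ in the restricted word.

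The fix is immediate and is precisely what the paper does: index~$\Q'$ by~$K$ alone. The facet~$I'$ is then $\set{k}{x_k\in I}$ where $K=\{x_1<\dots<x_p\}$, and the isomorphism sends a reachable~$J$ to $\set{k}{x_k\in J}$; the fixed positions~$I\ssm K$ simply do not appear in $\subwordComplex[\Q',\rho']$. The existence of the letters~$\q'_k$ for $k\in I\cap K$ is argued in the paper exactly along the lines you sketch (biconvexity of $\inv(\rho)\cap V'$ and weak-order covers in~$W'$), with the additional observation that when $\Root{I}{x_k}$ is negative one first flips~$x_k$ to reduce to the positive case. With~$I\ssm K$ removed, your argument and the paper's are the same.
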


\begin{proof}
To prove this proposition, we explicitly construct the word~$\Q'$ on~$S'$ and the element~$\rho' \in W'$ where $(W',S')$ is the restriction of $(W,S)$ to the subspace~$V'$.

First, the element~$\rho'$ only depends on $\rho$ and on~$V'$: it is given by the projection of $\rho$ onto $W'$.
This is to say that $\rho'$ is the unique element in~$W'$ whose inversion set is~$\inv(\rho') = \inv(\rho) \cap V'$.
To see that $\inv(\rho) \cap V'$ is again an inversion set, remember that a subset $\mathcal{I}$ of $\Phi^+$ is an inversion set for an element in $W$ if and only if for all $\alpha,\beta,\gamma \in \Phi^+$ such that~$\gamma = a\alpha + b\beta$ for some $a,b\in \R_{\geq 0}$,
$$\alpha,\beta \in \mathcal{I} \; \implies \; \gamma \in \mathcal{I} \; \implies \; \big( \alpha \in \mathcal{I} \text{ or } \beta \in \mathcal{I}\big),$$
see \eg \cite{Papi}. Moreover this property is preserved under intersection with linear subspaces.

We now construct the word~$\Q'$ and the facet~$I'$ of $\subwordComplex[\Q',\rho']$ corresponding to the particular facet $I$ of $\subwordComplex$.
For this, let $X \eqdef \{ x_1,\ldots,x_p\}$ be the set of positions~$k \in [m]$ such that $\Root{I}{k} \in V'$.
The word~$\Q'$ has~$p$ letters corresponding to the positions in~$X$, and the facet~$I'$ contains precisely the positions~$k \in [p]$ such that the position~$x_k$ is in~$I$.
To construct the word~$\Q'$, we scan~$\Q$ from left to right as follows.
We initialize~$\Q'$ to the empty word, and for each $1 \leq k \leq p$, we add a letter $\q'_k \in S'$ to~$\Q'$ in such a way that $\Root{I'}{k} = \Root{I}{x_k}$.
To see that such a letter exists, we distinguish two cases.
Assume first that~$\Root{I}{x_k}$ is a positive root.
Let~$\mathcal{I}$ be the inversion set of~$w \eqdef \wordprod{\Q}{[x_k-1] \ssm I}$ and~$\mathcal{I}' = \mathcal{I} \cap V'$ be the inversion set of~$w' \eqdef \wordprod{\Q'}{[k-1] \ssm I'}$.
Then the set~$\mathcal{I}' \cup \{\Root{I}{x_k}\}$ is again an inversion set (as the intersection of~$V'$ with the inversion set~$\mathcal{I} \cup \{\Root{I}{x_k}\}$ of~$w q_{x_k}$) which contains the inversion set~$\mathcal{I'}$ of~$w'$ together with a unique additional root.
Therefore, the corresponding element of~$W'$ can be written as~$w'q'_k$ for some simple reflection~$q'_k \in S'$.
Assume now that~$\Root{I}{x_k}$ is a negative root.
Then~$x_k \in I$, so that we can flip it with a position~$x_{k'} < x_k$, and we can then argue on the resulting facet.

By the procedure described above, we eventually obtain the subword complex $\subwordComplex[\Q',\rho']$ and its facet $I'$ corresponding to the facet $I$. Finally observe that sequences of flips in $\subwordComplex$ starting at the facet $I$, and whose directions are contained in $V'$, correspond bijectively to sequences of flips in $\subwordComplex[\Q',\rho']$ starting at the facet $I'$. In particular, let $J$ and $J'$ be two facets reached from $I$ and from $I'$, respectively, by such a sequence. We then have that the root configuration of $J'$ is exactly the root configuration of $J$ intersected with $V'$, and that the order in which the roots appear in the root configurations is preserved. This completes the proof.
\end{proof}

\begin{example}
\begin{figure}[t]
  \centerline{\includegraphics[width=.95\textwidth]{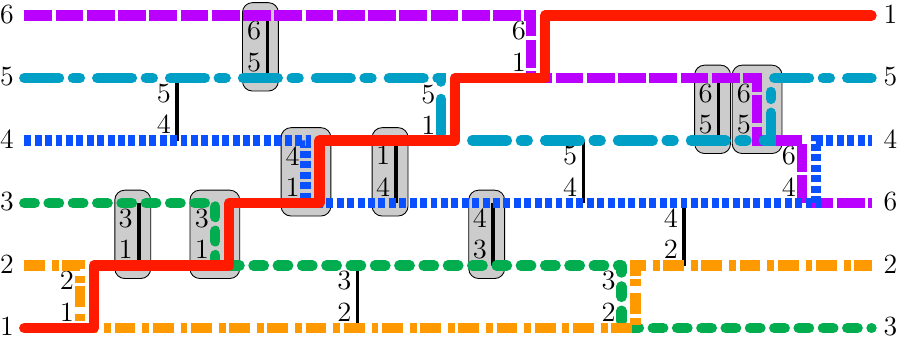}}
  \vspace{.1cm}
  \centerline{$\Downarrow$ \quad restriction to the space~$V' = \vect \langle e_3-e_1, e_4-e_3, e_6-e_5 \rangle$ \quad $\Downarrow$}
  \vspace{.1cm}
  \centerline{\includegraphics[width=.95\textwidth]{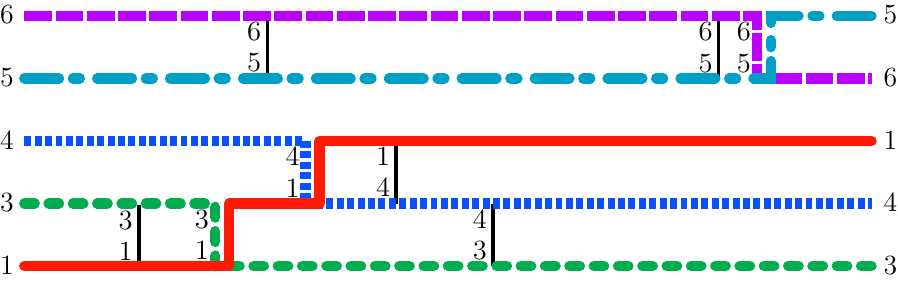}}
  \caption{Restricting subword complexes.}
  \label{fig:restriction}
\end{figure}

To illustrate different possible situations happening in this restriction, we consider the subword complex~$\subwordComplex$ on the Coxeter group~$A_5 = \fS_6$ generated by~$S = \{\tau_1, \dots, \tau_5\}$, the word $\Q \eqdef \tau_1 \tau_2 \tau_4 \tau_2 \tau_5 \tau_3 \tau_1 \tau_3 \tau_4 \tau_2 \tau_5 \tau_3 \tau_1 \tau_2 \tau_4 \tau_4 \tau_3$ and the element~$\rho \eqdef [3,2,6,4,5,1] = \tau_1 \tau_2 \tau_3 \tau_4 \tau_5 \tau_1 \tau_4 \tau_3$. The sorting network corresponding to the subword complex~$\subwordComplex$ and the pseudoline arrangement corresponding to the facet~$I \eqdef \{2,3,5,7,8,10,12,14,15\}$ of~$\subwordComplex$ are shown in \fref{fig:restriction}\,(top).
Let~$V'$ be the subspace of~$V$ spanned by the roots $e_3-e_1$, $e_4-e_3$ and $e_6-e_5$.
Let~$X = \{x_1,\dots,x_8\} = \{2,4,5,6,8,10,15,16\}$ denote the set of positions~$k \in [17]$ for which~$\Root{I}{k} \in V'$.
These positions are circled in \fref{fig:restriction}\,(top).

We can now directly read off the subword complex~$\subwordComplex[\Q',\rho']$ corresponding to the restriction of~$\subwordComplex$ to all facets reachable from~$I$ by flips with directions in~$V'$.
Namely, the restriction of~$(W,S)$ to~$V'$ is the Coxeter system~$(W',S')$ where~$W'$ is generated by $S' = \{\tau'_1,\tau'_2,\tau'_3\} = \{ (1\;3), (3\;4), (5\;6)\}$, and thus of type $A_2 \times A_1$.
Moreover, we have $\Q' = \tau'_1 \tau'_1 \tau'_3 \tau'_2 \tau'_2 \tau'_1 \tau'_3 \tau'_3$, corresponding to the roots at positions in~$X$, and $\rho' = \tau'_1\tau'_2\tau'_3$, with inversion set given by the positive roots corresponding to the roots at positions in~$X \ssm I$.
Finally, the facet~$I'$ corresponding to~$I$ is given by~$I' = \{1,3,5,6,7\}$.
The sorting network corresponding to the restricted subword complex~$\subwordComplex[\Q',\rho']$ and the pseudoline arrangement corresponding to the facet~$I'$ of~$\subwordComplex[\Q',\rho']$ are shown in \fref{fig:restriction}\,(bottom).

As stated in Proposition~\ref{prop:restriction}, the map which sends a facet~$J$ of~$\subwordComplex[\Q',\rho']$ to the facet~$\set{x_j}{j \in J} \cup (I \ssm X)$ of~$\subwordComplex$ defines an isomorphism between the increasing flip graph $\flipGraph[\Q',\rho']$ and the restriction of the increasing flip graph $\flipGraph$ to all facets reachable from $I$ by flips with directions~in~$V'$.
\end{example}


\section{EL-labelings and spanning trees for the subword complex}
\label{sec:labelings&trees}

\subsection{EL-labelings of the increasing flip graph}
\label{subsec:ELlabelingsflipgraph}

We now define two natural edge labelings of the increasing flip graph~$\flipGraph$.

Let~$I$ and~$J$ be two adjacent facets of~$\subwordComplex$, with $I \ssm i = J \ssm j$ and~${i<j}$.
We label the edge $I \edge J$ of~$\flipGraph$ with the positive edge label~$\positiveEdgeLabel(I \sep J) \eqdef i$ and with the negative edge label~$\negativeEdgeLabel(I \sep J) \eqdef j$.
In other words, $\positiveEdgeLabel$ labels the position flipped out while $\negativeEdgeLabel$ labels the position flipped in during the flip~$I \edge J$.
We call $\positiveEdgeLabel : E(\flipGraph) \to [m]$ the \defn{positive edge labeling} and $\negativeEdgeLabel : E(\flipGraph) \to [m]$ the \defn{negative edge labeling} of the increasing flip graph~$\flipGraph$.
The terms ``positive'' and ``negative'' emphasize the fact that the roots~$\Root{I}{\positiveEdgeLabel(I \sep J)}$ and~$\Root{J}{\negativeEdgeLabel(I \sep J)}$ are always positive and negative roots respectively.

The positive and negative edge labelings are reverse to one another (see Remark~\ref{rem:reversal}).
Namely, $I \edge J$ is an edge in the increasing flip graph $\flipGraph[\q_m \cdots \q_1, \rho^{-1}]$ if and only if $J' \eqdef \set{m+1-j}{j \in J} \edge I' \eqdef \set{m+1-i}{i \in I}$ is an edge in the increasing flip graph $\flipGraph[\q_1 \cdots \q_m, \rho]$, and in this case $\negativeEdgeLabel(I \sep J) = m+1-\positiveEdgeLabel(J' \sep I')$.
However, we will work in parallel with both labelings, since we believe that certain results are simpler to present on the positive side while others are simpler on the negative side.
We always provide proofs on the easier side and leave it to the reader to translate to the opposite side.

\begin{example}
Consider the subword complex~$\subwordComplex[\Qex,\rhoex]$ of Example~\ref{exm:toto}.
We have represented on \fref{fig:ELlabeling} the positive and negative edge labelings~$\positiveEdgeLabel$ and~$\negativeEdgeLabel$.
Since we have represented the graph~$\flipGraph[\Qex,\rhoex]$ such that the flips are increasing from left to right, each edge has its positive label on the left and its negative label on the right.

\begin{figure}[ht]
  \centerline{\includegraphics[width=\textwidth]{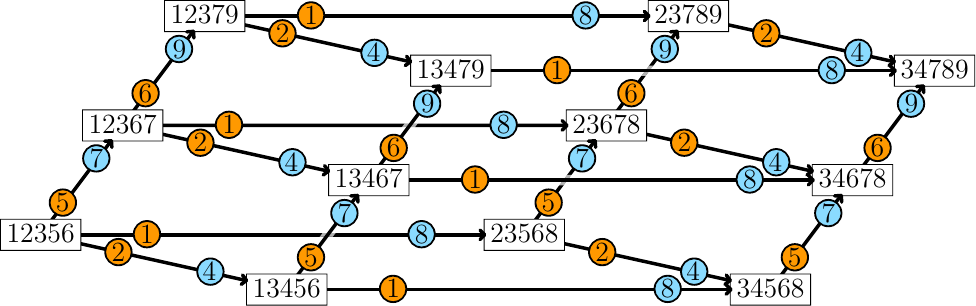}}
  \caption{The positive and negative edge labelings~$\positiveEdgeLabel$ and~$\negativeEdgeLabel$ of~$\flipGraph[\Qex,\rhoex]$. Each edge has its positive label on the left (orange) and its negative label on the right (blue).}
  \label{fig:ELlabeling}
\end{figure}
\end{example}

The central result of this paper concerns the positive and negative edge labelings of the increasing flip graph.

\begin{theorem}
\label{thm:ELlabeling}
The positive edge labeling~$\positiveEdgeLabel$ and the negative edge labeling~$\negativeEdgeLabel$ are both EL-labelings of the increasing flip graph.
\end{theorem}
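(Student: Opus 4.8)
The plan is to prove the statement for the positive edge labeling~$\positiveEdgeLabel$; the result for~$\negativeEdgeLabel$ then follows by the reversal operation of Remark~\ref{rem:reversal}, which identifies $\positiveEdgeLabel$ on $\flipGraph[\q_m\cdots\q_1,\rho^{-1}]$ with $\negativeEdgeLabel$ on $\flipGraph[\q_1\cdots\q_m,\rho]$ up to the order-reversing relabeling $k\mapsto m+1-k$. So fix two facets $I \directedPath K$ in the increasing flip graph. I must show (i) there is a unique $\positiveEdgeLabel$-rising path from $I$ to $K$, and (ii) this path is lexicographically smallest among all directed paths from $I$ to $K$. A natural strategy is induction on $m$ (the length of the word $\Q$), or equivalently on the length of the interval, using the inductive decomposition of facets from Section~\ref{subsec:induction}.

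The key construction is the \emph{canonical first step}. Given a directed interval $[I,K]$ in $\flipGraph$ with $I\neq K$, let $i_0$ be the smallest position that is flipped out along \emph{some} directed path from $I$ to $K$. First I would show that $i_0$ is flippable in $I$ itself and that the facet $I'$ obtained from $I$ by flipping $i_0$ still satisfies $I' \directedPath K$; moreover, that every directed path from $I$ to $K$ whose first label equals $i_0$ in fact passes through $I'$ (because the flip out of $i_0$ is forced to flip in the unique partner position determined by Proposition~\ref{prop:roots&flips}\eqref{prop:roots&flips:flip}), and that $i_0$ is never flipped again along any such path. This is the point where the restriction result, Proposition~\ref{prop:restriction}, enters: to control which positions can be flipped and in what order along paths in $[I,K]$, I would restrict the subword complex to suitable dihedral parabolic subsystems spanned by pairs of flip directions, reducing the local analysis of ``can we reorder two flips'' to the rank-two case, where the flip graph is an explicit path or cycle and everything can be checked by hand. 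Iterating the canonical first step yields a distinguished path $I \edge I' \edge I'' \edge \cdots \edge K$; by construction its label sequence is strictly increasing, giving existence in (i), and it is pointwise $\leq$ any other path's label sequence at the first position where they differ, giving (ii) and hence uniqueness of the rising path.

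The main obstacle I anticipate is proving that the canonical path is genuinely $\positiveEdgeLabel$-\emph{rising}, i.e.\ that after flipping out $i_0$ the next canonical label $i_1$ (the minimum position flipped out on a path from $I'$ to $K$) is strictly greater than $i_0$. Equivalently, one must rule out that, having flipped $i_0$, a strictly smaller position becomes flippable and lies on a path to $K$. The resolution should again come from the rank-two restriction: if flipping in direction $\Root{I}{i_0}$ created a smaller flippable position, that position and $i_0$ would generate a parabolic subgroup in which the increasing flip subgraph forces $i_0$ to be flipped in again before $K$ is reached, contradicting minimality of $i_0$ over the whole interval. Care is needed because the direction of the flip out of $i_0$ can change the root function on the window $(\min(i_0,j),\max(i_0,j)]$ via Proposition~\ref{prop:roots&flips}\eqref{prop:roots&flips:update}, so the argument must track which positions' roots are altered and verify that none of the altered positions becomes flippable with a smaller index en route to $K$.

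Finally, for the lexicographic-minimality claim (ii), once existence and uniqueness of the rising path are in hand together with the ``every path with first label $i_0$ factors through $I'$'' property, the statement follows by a clean induction: any path from $I$ to $K$ has first label $\geq i_0$ with equality only if it passes through $I'$, at which point I apply the inductive hypothesis to the strictly shorter interval $[I',K]$. Assembling these pieces proves that $\positiveEdgeLabel$ is an EL-labeling of~$\flipGraph$, and reversal gives the same for~$\negativeEdgeLabel$.
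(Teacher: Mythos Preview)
Your overall strategy is sound and rests on the same key tool the paper uses---restriction to dihedral parabolic subsystems via Proposition~\ref{prop:restriction}---but the organization differs in a way worth noting. The paper splits the argument cleanly: an elementary observation (Proposition~\ref{prop:unicity}) shows that along \emph{any} path $I=I_1\edge\cdots\edge I_{\ell+1}=K$ one has $\min\{\positiveEdgeLabel_k,\dots,\positiveEdgeLabel_\ell\}=\min(I_k\ssm K)$, which immediately gives both uniqueness of the rising path and its lexicographic minimality; existence is then handled separately by taking an arbitrary path and iteratively pushing the locally minimal label to the front using a rank-two swap (Lemma~\ref{lem:dihedral}). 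Your greedy ``canonical first step'' construction bundles these together, which is fine but obscures that two of the three EL conditions are essentially free.

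Concretely, your $i_0$ is exactly $\min(I\ssm K)$ by the observation above, and once you know that, your anticipated ``main obstacle'' evaporates: after flipping $i_0$ out of $I$ to get $I'=(I\ssm i_0)\cup j$ with $j>i_0$, every element of $I'\ssm K$ exceeds $i_0$, so $i_1=\min(I'\ssm K)>i_0$ automatically---no rank-two argument is needed here. The genuine work lies where you pass quickly: showing that $i_0$ is (increasingly) flippable \emph{in $I$ itself} and that the resulting $I'$ still satisfies $I'\directedPath K$. This is precisely what the dihedral swap buys: given a two-step path $I_{k'-1}\edge I_{k'}\edge I_{k'+1}$ with $\positiveEdgeLabel_{k'-1}>\positiveEdgeLabel_{k'}$, restrict to the subspace spanned by $\Root{I_{k'-1}}{\positiveEdgeLabel_{k'-1}}$ and $\Root{I_{k'-1}}{\positiveEdgeLabel_{k'}}$ and replace it by a rising path starting with $\positiveEdgeLabel_{k'}$. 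Iterating this pushes $i_0$ to the front of some path, simultaneously proving it is flippable in $I$ and that $I'\directedPath K$. Your sketch gestures at this but should make explicit that the rank-two lemma is doing the existence work, not the monotonicity of the $i_k$.
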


For Cambrian lattices, whose Hasse diagrams were shown to be particular cases of increasing flip graphs in~\cite[Section~6]{PilaudStump}, a similar result was recently obtained by M.~Kallipoliti and H.~M\"uhle in~\cite{KallipolitiMuhle}.
See Section~\ref{subsubsec:cambrian} for details.

In Sections~\ref{subsec:greedyFacets} to~\ref{subsec:greedyFlipAlgorithm}, we present applications of Theorem~\ref{thm:ELlabeling} to the construction of canonical spanning trees and to the generation of the facets of the subword complex.
Further combinatorial applications of this theorem are also discussed in Section~\ref{sec:furtherCombinatorialProperties}.
We prove Theorem~\ref{thm:ELlabeling} only for the positive edge labeling~$\positiveEdgeLabel$, and leave it to the reader to translate the proof to the negative edge labeling~$\negativeEdgeLabel$ (through the reversal operation of Remark~\ref{rem:reversal}).
Let~$I$ and~$J$ be two facets of~$\subwordComplex$ such that~$I \directedPath J$.
To show that~$\positiveEdgeLabel$ is indeed an EL-labeling, we have to show that (i) there is a $\positiveEdgeLabel$-rising path from~$I$ to~$J$ in~$\flipGraph$ which is (ii) unique and (iii) lexicographically first among all paths from~$I$ to~$J$ in~$\flipGraph$.
We start with (ii)~and~(iii), which are direct consequences of the following proposition.

\begin{proposition}
\label{prop:unicity}
Let~${I_1 \edge \cdots \edge I_{\ell+1}}$ be a path of increasing flips, and define the labels ${\positiveEdgeLabel_k \eqdef \positiveEdgeLabel(I_k \sep I_{k+1})}$ and $\negativeEdgeLabel_k \eqdef \negativeEdgeLabel(I_k \sep I_{k+1})$.
Then, for all~$k \in [\ell]$, we have
$$\min\{\positiveEdgeLabel_k, \dots, \positiveEdgeLabel_\ell\} = \min(I_k \ssm I_{\ell+1}) \quad \text{and} \quad \max\{\negativeEdgeLabel_1, \dots, \negativeEdgeLabel_k\} = \max(I_{k+1} \ssm I_1).$$
Moreover, the path is $\positiveEdgeLabel$-rising if and only if~$\positiveEdgeLabel_k = \min(I_k \ssm I_{\ell+1})$ for all~$k \in [\ell]$, while the path is $\negativeEdgeLabel$-rising if and only if~$\negativeEdgeLabel_k = \max(I_{k+1} \ssm I_1)$ for all~$k \in [\ell]$.
\end{proposition}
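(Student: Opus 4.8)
The plan is to prove the two claimed identities by induction on the length $\ell$ of the path, and then deduce the characterization of $\positiveEdgeLabel$-rising (resp. $\negativeEdgeLabel$-rising) paths as an easy consequence. I will only treat the positive statement $\min\{\positiveEdgeLabel_k,\dots,\positiveEdgeLabel_\ell\} = \min(I_k \ssm I_{\ell+1})$; the negative statement follows by the reversal operation of Remark~\ref{rem:reversal}, which turns the increasing flip graph $\flipGraph[\q_1\cdots\q_m,\rho]$ into $\flipGraph[\q_m\cdots\q_1,\rho^{-1}]$ and swaps $\positiveEdgeLabel$ with $\negativeEdgeLabel$ while replacing each position $x$ by $m+1-x$ (so $\min$ becomes $\max$ and $I_k\ssm I_{\ell+1}$ becomes $I_{\ell+1}'\ssm I_k'$, shifting the index by one as in the statement). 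The key structural fact I will use repeatedly is that a single increasing flip $I_k \edge I_{k+1}$ with $I_k \ssm \positiveEdgeLabel_k = I_{k+1} \ssm \negativeEdgeLabel_k$ and $\positiveEdgeLabel_k < \negativeEdgeLabel_k$ satisfies $I_{k+1} = (I_k \ssm \positiveEdgeLabel_k) \cup \negativeEdgeLabel_k$, so that $I_{k+1}$ and $I_k$ differ only in these two positions.

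The induction runs downward on $k$. For $k = \ell$ the claim reads $\positiveEdgeLabel_\ell = \min(I_\ell \ssm I_{\ell+1})$; since $I_\ell \ssm I_{\ell+1} = \{\positiveEdgeLabel_\ell\}$ is a singleton (the symmetric difference of two facets related by a single flip, intersected with $I_\ell$), this is immediate. For the inductive step, suppose the identity holds for $k+1$, i.e. $\min\{\positiveEdgeLabel_{k+1},\dots,\positiveEdgeLabel_\ell\} = \min(I_{k+1}\ssm I_{\ell+1})$. I need to show $\min(\{\positiveEdgeLabel_k\} \cup \{\positiveEdgeLabel_{k+1},\dots,\positiveEdgeLabel_\ell\}) = \min(I_k \ssm I_{\ell+1})$. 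Using $I_{k+1} = (I_k \ssm \positiveEdgeLabel_k) \cup \negativeEdgeLabel_k$, one relates $I_{k+1}\ssm I_{\ell+1}$ to $I_k \ssm I_{\ell+1}$: these two sets differ by at most the elements $\positiveEdgeLabel_k$ and $\negativeEdgeLabel_k$. The crucial point is that $\positiveEdgeLabel_k = \min(I_k \ssm I_{\ell+1})$ cannot hold "accidentally" — one must rule out the possibility that $\negativeEdgeLabel_k$, which enters $I_{k+1}$, is itself smaller than everything and then never leaves, and one must check that $\positiveEdgeLabel_k \notin I_{\ell+1}$ whenever $\positiveEdgeLabel_k < \min(I_{k+1}\ssm I_{\ell+1})$. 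Here one uses that $\positiveEdgeLabel_k < \negativeEdgeLabel_k$ together with the inductive identity applied on both the $\positiveEdgeLabel$ and $\negativeEdgeLabel$ sides: since $\negativeEdgeLabel_k \in I_{k+1}$ and $\negativeEdgeLabel_k > \positiveEdgeLabel_k$, and since any position in $I_{k+1}\ssm I_{\ell+1}$ that is removed later must be a later $\positiveEdgeLabel$-label, a short case analysis comparing $\positiveEdgeLabel_k$ with $\min(I_{k+1}\ssm I_{\ell+1})$ closes the step. (Symmetrically, the parallel running induction for the $\negativeEdgeLabel$ identity, which I invoke via reversal, controls $\negativeEdgeLabel_k$.)

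For the final sentence of the proposition: the path is $\positiveEdgeLabel$-rising by definition means $\positiveEdgeLabel_1 < \positiveEdgeLabel_2 < \cdots < \positiveEdgeLabel_\ell$. If this holds then $\positiveEdgeLabel_k = \min\{\positiveEdgeLabel_k,\dots,\positiveEdgeLabel_\ell\} = \min(I_k\ssm I_{\ell+1})$ by the first identity. Conversely, suppose $\positiveEdgeLabel_k = \min(I_k \ssm I_{\ell+1})$ for all $k$. Since $\positiveEdgeLabel_k \in I_k \ssm I_{k+1}$ and in particular $\positiveEdgeLabel_k \notin I_{k+1}$, while $\positiveEdgeLabel_k \in I_k \ssm I_{\ell+1}$ forces $\positiveEdgeLabel_k$ to be in every $I_t$ with $t \le k$, one shows $\positiveEdgeLabel_k \notin I_{k+1} \ssm I_{\ell+1}$, hence $\positiveEdgeLabel_{k+1} = \min(I_{k+1}\ssm I_{\ell+1}) > \positiveEdgeLabel_k$ (strict, because $\positiveEdgeLabel_k$ is removed from the relevant set at step $k$ and never re-added, as facets along the path only gain positions $> \positiveEdgeLabel_k$ after step $k$... more precisely $\positiveEdgeLabel_k \notin I_t$ for all $t > k$ since once flipped out it is not among the later flipped-in positions $\negativeEdgeLabel_{k+1},\dots,\negativeEdgeLabel_\ell$, each of which exceeds the corresponding $\positiveEdgeLabel$ which is $\ge \positiveEdgeLabel_{k+1} > \positiveEdgeLabel_k$). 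This yields $\positiveEdgeLabel_1 < \positiveEdgeLabel_2 < \cdots$, i.e. the path is $\positiveEdgeLabel$-rising.

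I expect the main obstacle to be the inductive step, specifically showing that $\positiveEdgeLabel_k \notin I_{\ell+1}$ in the case $\positiveEdgeLabel_k < \min(I_{k+1}\ssm I_{\ell+1})$ — equivalently, ruling out that the position flipped out at step $k$ was absent from the target facet for the "wrong reason". The clean way to handle this is to run the positive and negative identities simultaneously in one induction (rather than deriving one from the other only at the end), so that at step $k$ one knows both that later small positions come from later $\positiveEdgeLabel$-labels and that the just-introduced position $\negativeEdgeLabel_k$ is constrained by being large; then the bookkeeping that positions flipped out never return (because $\positiveEdgeLabel_t < \negativeEdgeLabel_t$ along an increasing path and the multiset of positions is otherwise preserved by Proposition~\ref{prop:roots&flips}) makes the argument go through.
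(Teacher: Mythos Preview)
Your inductive approach is correct in outline, but the paper's proof is considerably more direct: it avoids induction altogether. The paper observes in two lines that $p \eqdef \min\{\positiveEdgeLabel_k,\dots,\positiveEdgeLabel_\ell\}$ lies in $I_k \ssm I_{\ell+1}$ (it is flipped out at some step, and since every $\negativeEdgeLabel_s$ along the subpath satisfies $\negativeEdgeLabel_s > \positiveEdgeLabel_s \ge p$, it is never flipped in---hence $p \in I_k$ and $p \notin I_{\ell+1}$), and that $p$ must be the minimum of $I_k \ssm I_{\ell+1}$ (any $q \in I_k \ssm I_{\ell+1}$ has to be flipped out at some step, so $q \in \{\positiveEdgeLabel_k,\dots,\positiveEdgeLabel_\ell\}$ and $q \ge p$). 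Your ``main obstacle''---showing $\positiveEdgeLabel_k \notin I_{\ell+1}$ when $\positiveEdgeLabel_k$ is the overall minimum---is exactly this first observation, and in the direct argument it is the \emph{starting point} rather than a difficulty uncovered mid-induction. The induction, and the suggestion to run both identities simultaneously, buys nothing here; the direct argument is both shorter and more transparent.

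One small correction: in your converse direction for the characterization of $\positiveEdgeLabel$-rising paths, the parenthetical justification that $\positiveEdgeLabel_k$ is never flipped back in invokes $\positiveEdgeLabel_{k+1} > \positiveEdgeLabel_k$, which is what you are trying to prove. The non-circular argument (which the paper leaves implicit) is that every element of $I_{k+1} \ssm I_{\ell+1}$ is either $\negativeEdgeLabel_k > \positiveEdgeLabel_k$ or lies in $(I_k \ssm I_{\ell+1}) \ssm \{\positiveEdgeLabel_k\}$, hence strictly exceeds $\positiveEdgeLabel_k = \min(I_k \ssm I_{\ell+1})$.
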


\begin{proof}
The position~$\min\{\positiveEdgeLabel_k, \dots, \positiveEdgeLabel_\ell\}$ is in~$I_k \ssm I_{\ell+1}$ since it is flipped out and never flipped in along the path from~$I_k$ to~$I_{\ell+1}$ (because all flips are increasing).
Moreover, $\min\{\positiveEdgeLabel_k, \dots, \positiveEdgeLabel_\ell\}$~has to coincide with~$\min(I_k \ssm I_{\ell+1})$ otherwise this position would never be flipped out along the path.

This property immediately yields the characterization of $\positiveEdgeLabel$-rising paths.
Indeed, if the path is $\positiveEdgeLabel$-rising, then we have $\positiveEdgeLabel_k = \min(\positiveEdgeLabel_k, \dots, \positiveEdgeLabel_\ell) = \min(I_k \ssm I_{\ell+1})$ for all~$k \in [\ell]$.
Reciprocally, if~$\positiveEdgeLabel_k = \min(I_k \ssm I_{\ell+1})$ for all~$k \in [\ell]$, then we have ${\positiveEdgeLabel_k = \min(I_k \ssm I_{\ell+1}) < \min(I_{k+1} \ssm I_{\ell+1}) = \positiveEdgeLabel_{k+1}}$ so that the path is $\positiveEdgeLabel$-rising.

The proof is similar for the negative edge labeling~$\negativeEdgeLabel$.
\end{proof}

We now need to prove the existence of a $\positiveEdgeLabel$-rising path from~$I$ to~$J$.
Before proving it in full generality, we prove its crucial part in the particular case of dihedral subword complexes.

\begin{lemma}
\label{lem:dihedral}
Let $\subwordComplex$ be a subword complex for a dihedral reflection group $W = I_2(m)$.
Let $I$ and $K$ be two of its facets such that there is a path $I \edge J \edge K$ from $I$ to $K$ in $\flipGraph$ with $\positiveEdgeLabel(I \sep J) > \positiveEdgeLabel(J \sep K)$.
Then there is as well a $\positiveEdgeLabel$-rising path from $I$ to $K$ in $\flipGraph$.
\end{lemma}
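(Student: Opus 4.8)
The plan is to analyze the structure of subword complexes for a dihedral group $W = I_2(m)$ very explicitly, exploiting the fact that $W$ has only two simple reflections, say $s$ and $t$. In this setting, the flip graph $\flipGraph$ is extremely constrained: by Proposition~\ref{prop:roots&flips}\eqref{prop:roots&flips:inversions}, the complement of each facet has size $\ell(\rho) \le m$, and by Theorem~\ref{theo:KnutsonMiller} the subword complex is a sphere or a ball, hence (being of dimension giving a $1$-dimensional flip graph only when $\ell(\rho)$ is one less than the word length) the increasing flip graph is typically a path or a cycle. More precisely, I would first reduce to the case where every position is ``essential'', i.e. appears in some but not all facets, using the inductive decomposition of Section~\ref{subsec:induction}; in the dihedral reduced situation the facets are naturally linearly ordered and consecutive facets differ by a single flip. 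The key point is that in a dihedral subword complex, the directions of flips (elements of $\{\pm\beta : \beta \in \inv(\rho)\}$, a set with at most $m$ elements, cyclically ordered in the rank-$2$ root system) determine everything.

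The heart of the argument is local: we are given $I \edge J \edge K$ with $\positiveEdgeLabel(I\sep J) = i > \positiveEdgeLabel(J \sep K) = i'$, and we must produce a $\positiveEdgeLabel$-rising path from $I$ to $K$. By Proposition~\ref{prop:unicity}, a path from $I$ to $K$ is $\positiveEdgeLabel$-rising if and only if at each step it flips out the current minimum of the symmetric difference with the target $K$; so it suffices to show that such a greedy path from $I$ actually terminates at $K$ and not at some other facet. First I would pin down $I \ssm K$: since $I \directedPath K$ (the path $I \edge J \edge K$ witnesses this), Proposition~\ref{prop:unicity} applied to this $2$-step path gives $\min(I \ssm K) = \min\{i, i'\} = i'$ and $\max(K \ssm I) = \max\{j, j'\}$ where $j = \negativeEdgeLabel(I \sep J)$, $j' = \negativeEdgeLabel(J \sep K)$. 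So the $\positiveEdgeLabel$-rising path from $I$, if it exists, must start by flipping out $i'$. The content is to check that $i'$ is flippable in $I$ (not just in $J$) and that flipping it leads us genuinely closer to $K$; this is where the dihedral hypothesis does the work, via Proposition~\ref{prop:roots&flips}\eqref{prop:roots&flips:update}, since in rank $2$ the root-update formula $s_{\Root{I}{i}}(\Root{I}{k})$ permutes the at-most-$m$ roots in a fully predictable cyclic fashion, so one can track exactly which positions become flippable.

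Concretely, the cleanest route is a direct case analysis on the two facets and the dihedral word. I would set up coordinates: write $\inv(\rho) = \{\beta_1, \dots, \beta_p\}$ in the cyclic order inherited from the rank-$2$ root system, and use Proposition~\ref{prop:roots&flips} to identify, for each facet reachable from $I$, its root function as a ``rotation'' of a reference configuration. In these terms the facets reachable from $I$ form a path, the increasing flips always flip out the position carrying the smallest-indexed available positive root, and one reads off directly that the greedy ($\positiveEdgeLabel$-rising) path from $I$ reaches $K$. Alternatively — and this may be the slicker write-up — one observes that for a dihedral subword complex the $h$-vector / in-degree considerations force the increasing flip graph to be either a single directed path or a ``diamond'', and in a diamond $I \edge J \edge K$ with $\positiveEdgeLabel(I\sep J) > \positiveEdgeLabel(J\sep K)$ there is a second facet $J'$ with $I \edge J' \edge K$ and $\positiveEdgeLabel(I \sep J') < \positiveEdgeLabel(J' \sep K)$, which is the desired $\positiveEdgeLabel$-rising path. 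The main obstacle I anticipate is precisely establishing this dichotomy cleanly: ruling out that the greedy path from $I$ ``overshoots'' or gets stuck — equivalently, showing that whenever $i'$ is flippable in $J$ with $i' < i$, it is already flippable in $I$ and the flip does not change the relevant part of the root configuration outside the window $(\min, \max]$. Once the local root bookkeeping in rank $2$ is written out, the rest is routine; the dihedral restriction (rather than general $W$) is exactly what makes this bookkeeping finite and manageable, which is why the lemma is stated at this level of generality and then bootstrapped via Proposition~\ref{prop:restriction} in the proof of the main theorem.
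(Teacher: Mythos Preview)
Your plan is in the right neighborhood but substantially overcomplicates what the paper does, and one of your suggested routes is not quite correct.

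The paper's proof is a two-line case split. Set $i \eqdef \positiveEdgeLabel(I \sep J)$ and $j \eqdef \positiveEdgeLabel(J \sep K)$, with $i > j$. Since the flip $I \edge J$ only alters positions $\ge i$, and $j < i$, the position $j$ is already in $I$ (you observe this too). Now the whole argument is: either $\Root{I}{i}$ and $\Root{I}{j}$ span a one-dimensional space, or a two-dimensional one. In the first case $\Root{I}{i} = \Root{I}{j}$, so flipping $j$ in $I$ lands directly on $K$ (a length-$1$ $\positiveEdgeLabel$-rising path). In the second case one flips $j$, then $i$, then a possibly empty further $\positiveEdgeLabel$-rising tail, and in rank $2$ this is a direct check. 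That is the entire proof: no reduction to ``essential'' positions, no global classification of the dihedral flip graph as a path or cycle, no $h$-vector considerations.

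Your ``diamond'' alternative is where you go wrong. You suggest that in the dihedral case the increasing flip graph is either a directed path or a diamond, so that there would exist $J'$ with $I \edge J' \edge K$ giving a length-$2$ $\positiveEdgeLabel$-rising replacement. But the $\positiveEdgeLabel$-rising path from $I$ to $K$ need not have length $2$: in the two-dimensional case the paper explicitly allows a further nonempty $\positiveEdgeLabel$-rising sequence after flipping $j$ and then $i$, and Example~\ref{ex:increasingPath} (via the restriction of Proposition~\ref{prop:restriction}) exhibits a situation where a length-$2$ subpath is replaced by a length-$3$ one. So the dichotomy ``path or diamond'' does not hold, and a proof built on it would fail. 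Your other route---tracking the root function through the rank-$2$ cyclic structure and arguing the greedy path reaches $K$---would eventually succeed, but it is doing far more work than the simple dimension-of-span case split that the paper uses.
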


\begin{proof}
First, we remark that we construct a path only using letters in~$\Q$ at positions not used in~$I$ (those positions corresponding to the reduced expression for~$\rho$), together with the two positions~$i \eqdef \positiveEdgeLabel(I \sep J)$ and~$j \eqdef \positiveEdgeLabel(J \sep K)$.
Observe here that both~$i$ and~$j$ are already contained in~$I$.

We distinguish two cases: the roots~$\Root{I}{i}$ and~$\Root{I}{j}$ generate either a $1$- or a $2$-dimensional space.
In the first case, we have~$\Root{I}{i} = \Root{I}{j}$ and we can directly flip position $j$ in the facet~$I$ to obtain the facet~$K$.
In the second case, it is straightforward to check that we can perform a $\positiveEdgeLabel$-rising path from~$I$ to~$K$, starting with position~$j$, followed by position~$i$, and finishing by a possibly empty $\positiveEdgeLabel$-rising sequence of flips.
\end{proof}

We are now ready to prove Theorem~\ref{thm:ELlabeling}.
Restricting subword complexes to dihedral parabolic subgroups as presented in Section~\ref{subsec:restriction}, we will reduce the general case to several applications of the dihedral situation treated in Lemma~\ref{lem:dihedral}.

\begin{proof}[Proof of Theorem~\ref{thm:ELlabeling}]
Let~$I$ and~$J$ be two facets of~$\subwordComplex$ related by a path ${I = I_1 \edge \cdots \edge I_{\ell+1} = J}$ of increasing flips.
Let~${\positiveEdgeLabel_k \eqdef \positiveEdgeLabel(I_k \sep I_{k+1})}$.
Assume that this path is not $\positiveEdgeLabel$-rising, and let~$k$ be the smallest index such that ${\positiveEdgeLabel_k \ne \min \{ \positiveEdgeLabel_k, \dots, \positiveEdgeLabel_\ell \}}$, and let~$k' > k$ such that $\positiveEdgeLabel_{k'} = \min \{ \positiveEdgeLabel_k,\dots,\positiveEdgeLabel_\ell \}$.
We now prove that we can flip~$\positiveEdgeLabel_{k'}$ instead of~$\positiveEdgeLabel_{k'-1}$ in~$I_{k'-1}$, and still obtain a path from~$I$ to~$J$ where~$\positiveEdgeLabel_{k'}$ is still smaller than all positive edge labels appearing after it.
In Example~\ref{ex:increasingPath}, we illustrate this procedure on an explicit example.

Clearly~${\positiveEdgeLabel_{k'-1} > \positiveEdgeLabel_{k'}}$, and we have a $\positiveEdgeLabel$-falling sequence of two flips given by ${I_{k'-1} \edge I_{k'} \edge I_{k'+1}}$.
Using Proposition~\ref{prop:restriction}, we can now see these two flips as well in a subword complex for the dihedral parabolic subsystem. For this, restrict $(W,S)$ to the subspace $V'$ spanned by the two roots $\Root{I_{k'-1}}{\positiveEdgeLabel_{k'-1}}$ and $\Root{I_{k'-1}}{\positiveEdgeLabel_{k'}} = \Root{I_{k'}}{\positiveEdgeLabel_{k'}}$. This restricted subword complex corresponds to all facets of $\subwordComplex$ reachable from the particular facet $I_{k'-1}$ by flips whose directions are contained in $V'$.
Applying Lemma~\ref{lem:dihedral}, we can thus replace the subpath ${I_{k'-1} \edge I_{k'} \edge I_{k'+1}}$ by a $\positiveEdgeLabel$-rising path from~$I_{k'-1}$ to~$I_{k'+1}$ flipping first position~$\positiveEdgeLabel_{k'}$ and then a (possibly empty) sequence of positions larger than or equal to~$\positiveEdgeLabel_{k'-1}$.

Repeating this operation, we construct a path from~$I$ to~$J$ such that~$\positiveEdgeLabel_k = \min \{ \positiveEdgeLabel_k, \dots, \positiveEdgeLabel_\ell \}$.
By this procedure, we obtain eventually a $\positiveEdgeLabel$-rising path from~$I$ to~$J$.
This path is unique and lexicographically first among all paths from~$I$ to~$J$ in~$\flipGraph$ according to the characterization given in Proposition~\ref{prop:unicity}.
This concludes the proof that~$\positiveEdgeLabel$ is an EL-labeling of~$\flipGraph$.
The proof is similar for the negative edge labeling~$\negativeEdgeLabel$ (by the reversal operation in~Remark~\ref{rem:reversal}).
\end{proof}

\begin{example}
\label{ex:increasingPath}
Consider the subword complex~$\subwordComplex[\Qex,\rhoex]$ of Example~\ref{exm:toto}, whose labeled increasing flip graph is shown in \fref{fig:ELlabeling}.
Consider the path
$$12356 \edgePositiveLabel{5} 12367 \edgePositiveLabel{6} 12379 \edgePositiveLabel{2} 13479 \edgePositiveLabel{1} 34789,$$
in $\flipGraph$, where the numbers on the arrows are the positive edge labels.
In the language of the proof of Theorem~\ref{thm:ELlabeling}, we have~$k = 1$, $k' = 4$, and therefore we replace the subpath~$12379 \edgePositiveLabel{2} 13479 \edgePositiveLabel{1} 34789$ by the subpath $12379 \edgePositiveLabel{1} 23789 \edgePositiveLabel{2} 34789$, thus obtaining the path
$$12356 \edgePositiveLabel{5} 12367 \edgePositiveLabel{6} 12379 \edgePositiveLabel{1} 23789 \edgePositiveLabel{2} 34789.$$
Applying this operation again and again produces the sequence of paths given by
\begin{gather*}
12356 \edgePositiveLabel{5} 12367 \edgePositiveLabel{1} 23678 \edgePositiveLabel{6} 23789 \edgePositiveLabel{2} 34789, \\
12356 \edgePositiveLabel{1} 23568 \edgePositiveLabel{5} 23678 \edgePositiveLabel{6} 23789 \edgePositiveLabel{2} 34789, \\
12356 \edgePositiveLabel{1} 23568 \edgePositiveLabel{5} 23678 \edgePositiveLabel{2} 34678 \edgePositiveLabel{6} 34789, \\
12356 \edgePositiveLabel{1} 23568 \edgePositiveLabel{2} 34568 \edgePositiveLabel{5} 34678 \edgePositiveLabel{6} 34789.
\end{gather*}
The resulting path is~$\positiveEdgeLabel$-rising.
In this example, all paths happen to have the same length. This does not hold in general, compare \fref{fig:greedyTreeCoxeter} on page~\pageref{fig:greedyTreeCoxeter}, where the path
$123 \edgePositiveLabel{2} 137 \edgePositiveLabel{3} 178 \edgePositiveLabel{1} 678 \edgePositiveLabel{7} 689$
is, for example, replaced by the path
$123 \edgePositiveLabel{2} 137 \edgePositiveLabel{1} 357 \edgePositiveLabel{3} 567 \edgePositiveLabel{5} 678 \edgePositiveLabel{7} 689.$
\end{example}

In contrast to the rising paths, we can have none, one, or more than one $\positiveEdgeLabel$-falling and $\negativeEdgeLabel$-falling paths between two facets~$I$ and~$J$ of~$\subwordComplex$.
Even if we will not need it in the rest of the paper, we observe in the next proposition that there are always as many $\positiveEdgeLabel$-falling paths as $\negativeEdgeLabel$-falling paths from~$I$ to~$J$.
Remember that we say that a path $I_1 \edge I_2 \edge \cdots \edge I_{\ell+1}$ \defn{flips out}  the multiset $\flipsOut \eqdef \multiset{\positiveEdgeLabel(I_k \sep I_{k+1})}{k \in [\ell]}$ and \defn{flips in} the multiset~$\flipsIn \eqdef \multiset{\negativeEdgeLabel(I_k \sep I_{k+1})}{k \in [\ell]}$.
Note that a $\positiveEdgeLabel$-falling (resp.~$\negativeEdgeLabel$-falling) path is determined by the multiset~$\flipsOut$ (resp.~$\flipsIn$) of positions that it flips out (resp.~in).

\begin{proposition}
\label{prop:bijectionFallingPaths}
Let~$I$ and~$J$ be two facets of~$\subwordComplex$.
Then there are as many $\positiveEdgeLabel$-falling paths as $\negativeEdgeLabel$-falling paths from~$I$ to~$J$.
More precisely, for any multisubsets~$\flipsOut$ and~$\flipsIn$ of~$[m]$, there exists a $\positiveEdgeLabel$-falling path from~$I$ to~$J$ which flips out~$\flipsOut$ and flips in~$\flipsIn$, if and only if there exists an $\negativeEdgeLabel$-falling path with the same property.
\end{proposition}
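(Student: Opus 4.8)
The statement asserts a bijection-preserving correspondence between $\positiveEdgeLabel$-falling paths and $\negativeEdgeLabel$-falling paths with prescribed flip-out multiset $\flipsOut$ and flip-in multiset $\flipsIn$. The natural approach is to invoke the reversal operation of Remark~\ref{rem:reversal}, which already swaps the two labelings. Concretely, let $\Q = \q_1 \cdots \q_m$ and consider the reversed word $\bar\Q \eqdef \q_m \cdots \q_1$ with the element $\rho^{-1}$. Remark~\ref{rem:reversal} gives an isomorphism of simplicial complexes $\subwordComplex[\bar\Q,\rho^{-1}] \cong \subwordComplex$ sending a facet $K$ to $\bar K \eqdef \set{m+1-k}{k\in K}$, and this isomorphism reverses the orientation of every flip (an increasing flip $I \edge J$ in $\flipGraph$ becomes an increasing flip $\bar J \edge \bar I$ in $\flipGraph[\bar\Q,\rho^{-1}]$) and exchanges the positive and negative labelings, as recorded in the discussion preceding Theorem~\ref{thm:ELlabeling}: $\negativeEdgeLabel(I \sep J) = m+1 - \positiveEdgeLabel(\bar J \sep \bar I)$ and symmetrically $\positiveEdgeLabel(I \sep J) = m+1 - \negativeEdgeLabel(\bar J \sep \bar I)$.

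First I would set up this dictionary carefully. A path $I = I_1 \edge I_2 \edge \cdots \edge I_{\ell+1} = J$ of increasing flips in $\flipGraph$ corresponds under reversal to the path $\bar J = \bar I_{\ell+1} \edge \cdots \edge \bar I_1 = \bar I$ of increasing flips in $\flipGraph[\bar\Q, \rho^{-1}]$. Its sequence of positive labels in the reversed complex is $\big(m+1-\negativeEdgeLabel_\ell,\, m+1-\negativeEdgeLabel_{\ell-1},\, \dots,\, m+1-\negativeEdgeLabel_1\big)$ where $\negativeEdgeLabel_k \eqdef \negativeEdgeLabel(I_k \sep I_{k+1})$; thus the original path is $\negativeEdgeLabel$-falling (i.e.\ $\negativeEdgeLabel_1 \ge \negativeEdgeLabel_2 \ge \cdots \ge \negativeEdgeLabel_\ell$) exactly when the reversed path is $\positiveEdgeLabel$-rising. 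This is the \emph{wrong} parity match for a direct argument, so instead the cleaner route is: a path in $\flipGraph$ from $I$ to $J$ is $\negativeEdgeLabel$-falling if and only if, reading it backwards $J \edge \cdots \edge I$ (which is a path of \emph{decreasing} flips), one gets a sequence that is $\negativeEdgeLabel$-rising in the opposite direction — and the point is simply that $\negativeEdgeLabel$-falling paths from $I$ to $J$ in $\flipGraph$ are in canonical bijection with $\positiveEdgeLabel$-falling paths from $\bar J$ to $\bar I$ in $\flipGraph[\bar\Q,\rho^{-1}]$, via $I_1 \edge \cdots \edge I_{\ell+1} \mapsto \bar I_{\ell+1} \edge \cdots \edge \bar I_1$, since along this correspondence the sequence of negative labels $(\negativeEdgeLabel_1,\dots,\negativeEdgeLabel_\ell)$ becomes the sequence of positive labels $(m+1-\negativeEdgeLabel_1, \dots, m+1-\negativeEdgeLabel_\ell)$ \emph{in the same order}, and $a \ge b \iff m+1-a \le m+1-b$ — wait, that reverses monotonicity. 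So the honest statement is that the correspondence turns weakly-decreasing into weakly-increasing; hence it matches $\negativeEdgeLabel$-falling paths from $I$ to $J$ with $\positiveEdgeLabel$-\emph{rising} paths, of which there is exactly one by Theorem~\ref{thm:ELlabeling}. That cannot be right in general, which tells me the correspondence must instead be taken with the path traversed in the original order but in the reversed complex read backwards — the combinatorics needs to be pinned down precisely, and this bookkeeping is the main obstacle.

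Once the correct dictionary is fixed — I expect it to be: $\negativeEdgeLabel$-falling paths from $I$ to $J$ in $\flipGraph$ correspond bijectively to $\positiveEdgeLabel$-falling paths from $\bar J$ to $\bar I$ in $\flipGraph[\bar\Q, \rho^{-1}]$, with the multiset $\flipsIn$ of flipped-in positions of the former corresponding to the multiset $\set{m+1-s}{s \in \flipsOut}$ of flipped-out positions of the latter and vice versa — the proposition follows by applying the same reasoning once more, reversing $\bar\Q$ back to $\Q$, so that $\positiveEdgeLabel$-falling paths from $I$ to $J$ in $\flipGraph$ biject with $\negativeEdgeLabel$-falling paths from $\bar J$ to $\bar I$ in $\flipGraph[\bar\Q,\rho^{-1}]$. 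Composing the two reversals gives a bijection between $\positiveEdgeLabel$-falling paths from $I$ to $J$ and $\negativeEdgeLabel$-falling paths from $I$ to $J$ in $\flipGraph$ itself, under which the flip-out multiset $\flipsOut$ of one equals the flip-out multiset of the other and likewise for $\flipsIn$; the prescribed-multiset refinement is then immediate because, as noted before the statement, a $\positiveEdgeLabel$-falling (resp.\ $\negativeEdgeLabel$-falling) path is determined by $\flipsOut$ (resp.\ $\flipsIn$) alone. The main technical care, as flagged, is getting the index transformation $s \mapsto m+1-s$ and the path-reversal composed correctly so that the two reversals cancel on positions while swapping $\positiveEdgeLabel \leftrightarrow \negativeEdgeLabel$; everything else is formal.
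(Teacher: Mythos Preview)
Your approach has a genuine gap: the reversal argument is circular. The reversal operation of Remark~\ref{rem:reversal} is an involution. As you correctly compute (after some hesitation), it sends an $\negativeEdgeLabel$-falling path from~$I$ to~$J$ in~$\flipGraph$ to a $\positiveEdgeLabel$-falling path from~$\bar J$ to~$\bar I$ in~$\flipGraph[\bar\Q,\rho^{-1}]$, and likewise sends a $\positiveEdgeLabel$-falling path from~$I$ to~$J$ to an $\negativeEdgeLabel$-falling path from~$\bar J$ to~$\bar I$. But composing these two correspondences does \emph{not} give a bijection between $\positiveEdgeLabel$-falling and $\negativeEdgeLabel$-falling paths from~$I$ to~$J$: to chain them you would need to already know that $\positiveEdgeLabel$-falling and $\negativeEdgeLabel$-falling paths from~$\bar J$ to~$\bar I$ are in bijection, which is exactly the proposition you are trying to prove (for a different subword complex). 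Applying reversal twice simply returns you to where you started. In short, reversal shows that the proposition for~$(\Q,\rho,I,J)$ is \emph{equivalent} to the proposition for~$(\bar\Q,\rho^{-1},\bar J,\bar I)$, but gives no leverage to prove either.

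The paper's proof supplies the missing content with a direct ``bubble-sort'' argument on the path itself. Starting from a $\positiveEdgeLabel$-falling path $I = I_1 \edge \cdots \edge I_{\ell+1} = J$ with labels~$\positiveEdgeLabel_k, \negativeEdgeLabel_k$, one locates the first index~$k$ where $\negativeEdgeLabel_{k-1} < \negativeEdgeLabel_k$. Since the path is $\positiveEdgeLabel$-falling one has $\positiveEdgeLabel_k < \positiveEdgeLabel_{k-1} < \negativeEdgeLabel_{k-1} < \negativeEdgeLabel_k$, and then Proposition~\ref{prop:roots&flips}\eqref{prop:roots&flips:flip}--\eqref{prop:roots&flips:update} show that in~$I_{k-1}$ one can flip out~$\positiveEdgeLabel_k$ to flip in~$\negativeEdgeLabel_k$, and afterwards flip out~$\positiveEdgeLabel_{k-1}$ to flip in~$\negativeEdgeLabel_{k-1}$, landing at~$I_{k+1}$. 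This swap replaces $I_{k-1} \edge I_k \edge I_{k+1}$ by another length-two path with the same two flipped-out and flipped-in positions but the~$\negativeEdgeLabel$-labels now in decreasing order through index~$k$. Iterating yields an $\negativeEdgeLabel$-falling path that flips out exactly~$\flipsOut$ and flips in exactly~$\flipsIn$. Since a $\positiveEdgeLabel$-falling path is determined by~$\flipsOut$ and an $\negativeEdgeLabel$-falling path by~$\flipsIn$, this gives the desired bijection. The key input your proposal is missing is precisely this root-function computation showing that the two adjacent flips can be transposed while keeping the same in/out positions.
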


\begin{proof}
Consider a $\positiveEdgeLabel$-falling path~$I = I_1 \edge \cdots \edge I_{\ell+1} = J$.
Define~${\positiveEdgeLabel_k \eqdef \positiveEdgeLabel(I_k \sep I_{k+1})}$ and~${\negativeEdgeLabel_k \eqdef \negativeEdgeLabel(I_k \sep I_{k+1})}$.
We want to prove that there is as well an $\negativeEdgeLabel$-falling path which flips out~$\flipsOut \eqdef \multiset{\positiveEdgeLabel_k}{k \in [\ell]}$ and flips in~$\flipsIn \eqdef \multiset{\negativeEdgeLabel_k}{k \in [\ell]}$.

If the path~$I = I_1 \edge \cdots \edge I_{\ell+1} = J$ happens to be~$\negativeEdgeLabel$-falling, we are done.
Otherwise, consider the first position~$k$ such that~$\negativeEdgeLabel_{k-1} < \negativeEdgeLabel_k$.
Since the path is~$\positiveEdgeLabel$-falling, we thus have~$\positiveEdgeLabel_k < \positiveEdgeLabel_{k-1} < \negativeEdgeLabel_{k-1} < \negativeEdgeLabel_k$.
By Proposition~\ref{prop:roots&flips}\eqref{prop:roots&flips:flip}, we know that~${\Root{I_{k-1}}{\positiveEdgeLabel_{k-1}} = \Root{I_{k-1}}{\negativeEdgeLabel_{k-1}}}$ and~${\Root{I_k}{\positiveEdgeLabel_k} = \Root{I_k}{\negativeEdgeLabel_k}}$.
According to Proposition~\ref{prop:roots&flips}\eqref{prop:roots&flips:update} and to the previous inequalities, we therefore obtain
$$\Root{I_{k-1}}{\positiveEdgeLabel_k} = \Root{I_k}{\positiveEdgeLabel_k} = \Root{I_k}{\negativeEdgeLabel_k} = \Root{I_{k-1}}{\negativeEdgeLabel_k}.$$
Thus, in the facet~$I_{k-1}$, flipping out~$\positiveEdgeLabel_k$ flips in~$\negativeEdgeLabel_k$.
We denote by~$I'_k$ the facet of~$\subwordComplex$ obtained by this flip.
Using again Proposition~\ref{prop:roots&flips}\eqref{prop:roots&flips:update} and the previous inequalities, we obtain that
$$\Root{I'_k}{\positiveEdgeLabel_{k-1}} = s_{\Root{I_{k-1}}{\positiveEdgeLabel_k}}(\Root{I_{k-1}}{\positiveEdgeLabel_{k-1}}) = s_{\Root{I_{k-1}}{\positiveEdgeLabel_k}}(\Root{I_{k-1}}{\negativeEdgeLabel_{k-1}}) = \Root{I'_k}{\negativeEdgeLabel_{k-1}}.$$
Therefore, in the facet~$I'_k$, flipping out~$\positiveEdgeLabel_{k-1}$ flips in~$\negativeEdgeLabel_{k-1}$.
After these two flips, we thus obtain~$I_{k+1}$ (since we flipped out~$\positiveEdgeLabel_k$ and~$\positiveEdgeLabel_{k-1}$, while we flipped in~$\negativeEdgeLabel_k$ and~$\negativeEdgeLabel_{k-1}$).
In other words, we can replace the subpath~${I_{k-1} \edge I_k \edge I_{k+1}}$ by the path~${I_{k-1} \edge I'_k \edge I_{k+1}}$ where we flip first~$\positiveEdgeLabel_k$ to~$\negativeEdgeLabel_k$ and then~$\positiveEdgeLabel_{k-1}$ to~$\negativeEdgeLabel_{k-1}$.
The new path still flips out~$\flipsOut$ and flips in~$\flipsIn$, and the first~$k$ positions it flips in are in decreasing order.
Repeating this transformation finally yields an $\negativeEdgeLabel$-falling path from~$I$ to~$J$ which still flips out~$\flipsOut$ and flips in~$\flipsIn$.
Observe that this path does not necessarily coincide with the~$\positiveEdgeLabel$-falling path we started from.

Since a $\positiveEdgeLabel$-falling (resp.~$\negativeEdgeLabel$-falling) path is determined by the set of positions it flips out (resp.~in), we obtain a bijection between $\positiveEdgeLabel$-falling paths and $\negativeEdgeLabel$-falling paths from~$I$ to~$J$.
They are thus equinumerous.
\end{proof}

\begin{remark}
Observe that Proposition~\ref{prop:bijectionFallingPaths} can be deduced from the following observations in the situation of double root free subword complexes studied in Section~\ref{sec:furtherCombinatorialProperties}. In this situation, the flip graph is the Hasse diagram of its transitive closure and the $\positiveEdgeLabel$- and $\negativeEdgeLabel$-labelings are both EL-labelings thereof.
By Theorem~\ref{thm:fallingPath}, all $\positiveEdgeLabel$- and $\negativeEdgeLabel$-falling paths have the same length. Therefore, Proposition~\ref{prop:Moebius} implies that they are equinumerous.
A similar topological construction in the situation of subword complexes having double roots is yet to be found.\footnote{We thank an anonymous referee for raising this question.}
\end{remark}


\subsection{Greedy facets}
\label{subsec:greedyFacets}

We now characterize the unique source and sink of the increasing flip graph~$\flipGraph$.

\begin{proposition}
\label{prop:increasingFlipGraph}
The lexicographically smallest (resp.~largest) facet of~$\subwordComplex$ is the unique source (resp.~sink) of~$\flipGraph$.
\end{proposition}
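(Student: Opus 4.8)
The plan is to show that the lexicographically smallest facet admits no incoming increasing flip, and that it is the unique such facet; the statement about the lexicographically largest facet and sinks then follows by the reversal operation of Remark~\ref{rem:reversal}. First I would observe that an increasing flip into a facet~$I$ must flip out some position~$j \in I$ and flip in some position~$i < j$, producing a facet~$I' = (I \ssm j) \cup i$ which is lexicographically smaller than~$I$. Hence if~$I$ is the lexicographically smallest facet of~$\subwordComplex$, there can be no increasing flip terminating at~$I$, so~$I$ is a source of~$\flipGraph$; dually the lexicographically largest facet is a sink.

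The substance is to prove \emph{uniqueness}, i.e. that no other facet is a source. So suppose~$I$ is a facet that is not the lexicographically smallest one, and let me produce an incoming increasing flip. I would let~$I^{\min}$ be the lexicographically smallest facet, and let~$j$ be the smallest position in the symmetric difference~$I \symdif I^{\min}$; by minimality of~$I^{\min}$ we have~$j \notin I$ and~$j \in I^{\min}$, and all positions of~$[m]$ smaller than~$j$ belong to~$I$ if and only if they belong to~$I^{\min}$. Now~$j$ lies in the complement of~$I$, so by Proposition~\ref{prop:roots&flips}\eqref{prop:roots&flips:inversions} the root~$\Root{I}{j}$ is a positive root in~$\inv(\rho)$; I want to exhibit a position~$i \in I$ with~$i < j$ and~$\Root{I}{i} \in \{\pm\Root{I}{j}\}$, which by Proposition~\ref{prop:roots&flips}\eqref{prop:roots&flips:flip}–\eqref{prop:roots&flips:increasing} would give exactly an increasing flip into~$I$ (flipping out~$j$ is wrong direction — rather, such an~$i$ is flippable in some facet and flipping it \emph{in} is the incoming increasing flip; I need to be careful about which facet carries the flip). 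The cleaner route: since~$\flipGraph$ is connected (Theorem~\ref{theo:KnutsonMiller}) and acyclic, every vertex other than the sources is reached by some increasing flip, so it suffices to show~$\flipGraph$ has a \emph{unique} source; and a graph that is connected and acyclic with its natural orientation has a unique source precisely when from every vertex one can reach a common vertex by decreasing flips — so I would instead argue that from any facet~$I \ne I^{\min}$ there is a decreasing flip, equivalently that~$I$ has a flippable position~$i$ with a flip partner~$j < i$.

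Concretely, I would take~$I \ne I^{\min}$, let~$j = \min(I^{\min} \ssm I)$ as above, and consider a path of flips realizing~$I^{\min} \directedPath$(reversed)$\cdots$ — more directly, apply Proposition~\ref{prop:unicity}: if~$I$ were a source then there is no increasing flip into it, but consider any increasing path from~$I^{\min}$; if~$I^{\min}$ and~$I$ were both sources and the graph is connected, some facet would be reachable from both, contradicting acyclicity unless one reaches the other. The cleanest formulation: the increasing flip graph is acyclic (flips strictly change the facet and increasing flips strictly increase in lexicographic order, so there are no directed cycles — indeed any sequence of increasing flips strictly increases the facet lexicographically by the argument in the second paragraph), hence it has at least one source and at least one sink; I have shown~$I^{\min}$ is a source and~$I^{\max}$ is a sink; and since~$\flipGraph$ is connected, if there were a second source~$I' \ne I^{\min}$ then, following increasing flips from both~$I'$ and~$I^{\min}$ we must eventually be able to join them via the connectivity — more carefully, take the full increasing flip poset (transitive closure); it is connected as a graph and has a unique minimal element iff it has a unique source; and~$I^{\min}$ is below every facet since from any facet one can decrease lexicographically until reaching~$I^{\min}$, because \emph{as long as~$I \ne I^{\min}$ there exists a decreasing flip applicable to~$I$}. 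I expect the main obstacle to be precisely this last claim — exhibiting, for an arbitrary non-minimal facet~$I$, an explicit decreasing flip — and I would handle it by the root-function analysis sketched above: take~$j = \min(I \symdif I^{\min})$, note~$j \in I$ this time (since if~$j \in I^{\min} \ssm I$ one shows symmetrically that~$I$ admits an incoming increasing flip hence is not a source, contradiction), and use Proposition~\ref{prop:roots&flips}\eqref{prop:roots&flips:flippable}–\eqref{prop:roots&flips:flip} together with the fact that~$\Root{I}{\cdot}$ and~$\Root{I^{\min}}{\cdot}$ agree on~$[j-1]\ssm I = [j-1] \ssm I^{\min}$ to locate the flip partner of~$j$ at a position~$< j$, giving the required decreasing flip from~$I$.
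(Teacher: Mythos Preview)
Your observation that the lexicographically smallest facet is a source is correct and matches the paper's opening sentence. The difficulty, as you yourself recognize, lies entirely in uniqueness, and here the argument has a genuine gap.

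You correctly isolate the crux: one must show that every facet $I \ne I^{\min}$ admits a decreasing flip. But the root-function sketch you give does not establish this. With $j \eqdef \min(I \symdif I^{\min})$, your own earlier (correct) computation gives $j \in I^{\min} \ssm I$; the later claim ``note $j \in I$ this time'' contradicts it, and the parenthetical justification is circular. Since $j \notin I$, there is no ``flip partner of $j$ in $I$'' to locate --- $j$ lies in the complement of $I$, not among the flippable positions of $I$. Knowing that $\Root{I}{\cdot}$ and $\Root{I^{\min}}{\cdot}$ agree on $[j]$ only tells you that $\Root{I}{j} = \Root{I^{\min}}{j}$ is a positive inversion of $\rho$; it does not exhibit any position $i \in I$ with $\Root{I}{i} \in \Phi^-$. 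The connectivity detour is likewise incomplete: connectivity plus acyclicity of $\flipGraph$ does not force a unique source (two sources can both point to a common vertex), and ``from any facet one can decrease until reaching $I^{\min}$'' is exactly the statement at issue.

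The paper's proof avoids comparison with $I^{\min}$ altogether and instead inducts on the length of~$\Q$ via the decomposition of Section~\ref{subsec:induction}. Given any source~$\mathsf{P}$, one inspects the last position~$m$: if $\ell(\rho q_m) > \ell(\rho)$ then every facet contains~$m$ and one descends to $\subwordComplex[\Q_\eraseLast,\rho]$; if $\ell(\rho q_m) < \ell(\rho)$ and $m \in \mathsf{P}$, then the complement of $\mathsf{P}$ in $[m-1]$ is already a reduced word for $\rho$, so $\Root{\mathsf{P}}{m} = \rho(\alpha_{q_m}) \in \Phi^- \cap \rho(\Phi^+)$, giving an explicit decreasing flip at~$m$ and contradicting that $\mathsf{P}$ is a source. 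Hence $m \notin \mathsf{P}$ and one descends to $\subwordComplex[\Q_\eraseLast,\rho q_m]$. The decreasing flip is produced at the \emph{last} letter by a one-line root computation, not near the first point of disagreement with $I^{\min}$.
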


\begin{proof}
The lexicographically smallest facet is a source of~$\flipGraph$ since none of its flips can be decreasing.
We prove that this source is unique by induction on the word~$\Q$.
Denote by~$\positiveFacet[\Q_\eraseLast, \rho]$ (resp.~$\positiveFacet[\Q_\eraseLast, \rho q_m]$) the lexicographically smallest facet of~$\subwordComplex[\Q_\eraseLast, \rho]$ (resp.~$\subwordComplex[\Q_\eraseLast, \rho q_m]$) and assume that it is the unique source of the flip graph~$\flipGraph[\Q_\eraseLast, \rho]$ (resp.~$\flipGraph[\Q_\eraseLast, \rho q_m]$).
Consider a source~$\mathsf{P}$ of~$\flipGraph$.
We distinguish two cases:
\begin{enumerate}[$~\bullet$]
\item If~$\ell(\rho q_m) > \ell(\rho)$, then~$q_m$ cannot be the last reflection of a reduced expression for~$\rho$.
Thus~$\subwordComplex = \subwordComplex[\Q_\eraseLast, \rho] \join m$ and~$\mathsf{P} = \positiveFacet[\Q_\eraseLast, \rho] \cup m$.
\item Otherwise,~$\ell(\rho q_m) < \ell(\rho)$.
If~$m$ is in~$\mathsf{P}$, then
$$\Root{\mathsf{P}}{m} = \rho(\alpha_{q_m}) \in \Phi^-\cap\rho(\Phi^+).$$
Since $\Phi^-\cap\rho(\Phi^+) = -\inv(\rho)$, we obtain that $m$ is flippable (by Proposition~\ref{prop:roots&flips}\eqref{prop:roots&flips:flip}) and its flip is decreasing (by Proposition~\ref{prop:roots&flips}\eqref{prop:roots&flips:increasing}).
This would contradict the assumption that~$\mathsf{P}$ is a source of~$\flipGraph$.
Consequently,~$m \notin \mathsf{P}$.
Since the facets of~$\subwordComplex$ which do not contain~$m$ coincide with the facets of~$\subwordComplex[\Q_\eraseLast, \rho q_m]$, we obtain that $\mathsf{P} = \positiveFacet[\Q_\eraseLast,\rho q_m]$.
\end{enumerate}
In both cases, we obtain that the source~$\mathsf{P}$ is the lexicographically smallest facet of~$\subwordComplex$.
The proof is similar for the sink.
\end{proof}

We call \defn{positive} (resp.~\defn{negative}) \defn{greedy facet} and denote by $\positiveFacet$ (resp. $\negativeFacet$) the unique source (resp.~sink) of the graph~$\flipGraph$ of increasing flips.
The term ``positive'' (resp.~``negative'') emphasizes the fact that~$\positiveFacet$ (resp.~$\negativeFacet$) is the unique facet of~$\subwordComplex$ whose root configuration is a subset of positive (resp.~negative) roots, while the term ``greedy'' refers to the greedy properties of these facets underlined in Lemmas~\ref{lem:greedy1} and~\ref{lem:greedy2}.

\enlargethispage{.2cm}
These greedy facets are reverse to one another (see Remark~\ref{rem:reversal}).
Namely,
$$\negativeFacet[\q_m \cdots \q_1,\rho^{-1}] = \set{m+1-p}{p \in \positiveFacet[\q_1 \cdots \q_m, \rho]}.$$
We still work with both in parallel to simplify the presentation in the next section.

\begin{example}
Consider the subword complex $\subwordComplex[\Qex,\rhoex]$ presented in Example~\ref{exm:toto}. Its positive and negative greedy facets are $\positiveFacet[\Qex,\rhoex] = \{1,2,3,5,6\}$ and $\negativeFacet[\Qex,\rhoex] = \{3,4,7,8,9\}$, respectively, see \fref{fig:greedy}.
They appear respectively as the leftmost and rightmost facets in \fref{fig:flipGraph}.

\begin{figure}[ht]
  \centerline{\includegraphics[width=.9\textwidth]{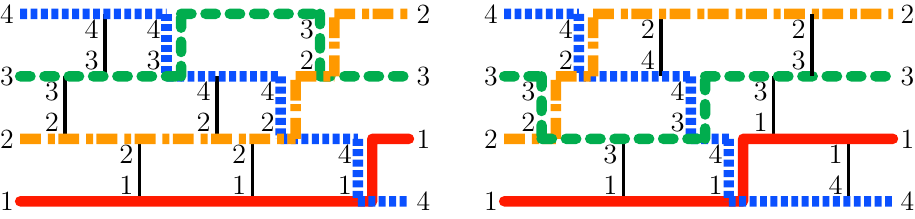}}
  \caption{The positive and negative greedy facets of~$\subwordComplex[\Qex,\rhoex]$.}
  \label{fig:greedy}
\end{figure}
\end{example}

The following two lemmas provide two (somehow inverse) greedy inductive procedures to construct the greedy facets~$\positiveFacet$ and~$\negativeFacet$.
These lemmas are direct consequences of the definition of the greedy facets and of the induction formulas for the facets~$\facets$ presented in Section~\ref{subsec:induction}.
Remember that we denote by~$\Q_\eraseFirst \eqdef \q_2 \cdots \q_m$ and~$\Q_\eraseLast \eqdef \q_1 \cdots \q_{m-1}$ the words on~$S$ obtained from~$\Q\! \eqdef \q_1 \cdots \q_m$ by deleting its first and last letters respectively, and by~$\shiftRight{X} \eqdef \set{x+1}{x \in X}$ the right shift of a subset~$X \subset \Z$.

\begin{lemma}
\label{lem:greedy1}
The greedy facets~$\positiveFacet$ and~$\negativeFacet$ can be constructed inductively from $\positiveFacet[\varepsilon,e] = \negativeFacet[\varepsilon,e] = \varnothing$ using the following formulas:
\begin{align*}
\positiveFacet & = \begin{cases} \positiveFacet[\Q_\eraseLast, \rho] \cup m & \text{if } m \text{ appears in all facets of } \subwordComplex, \\ \positiveFacet[\Q_\eraseLast, \rho q_m] & \text{otherwise.}\end{cases}
\hspace{1.2cm}
\\
\negativeFacet & = \begin{cases} 1 \cup \shiftRight{\negativeFacet[\Q_\eraseFirst, \rho]} & \text{if } 1 \text{ appears in all facets of } \subwordComplex, \\ \shiftRight{\negativeFacet[\Q_\eraseFirst, q_1\rho]} & \text{otherwise.}\end{cases}
\end{align*}
\end{lemma}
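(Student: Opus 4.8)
The plan is to prove Lemma~\ref{lem:greedy1} by combining the characterization of the greedy facets given in Proposition~\ref{prop:increasingFlipGraph} (namely that $\positiveFacet$ is the lexicographically smallest facet of $\subwordComplex$ and $\negativeFacet$ the lexicographically largest) with the inductive decomposition of $\facets$ along the last letter of $\Q$ recalled in Section~\ref{subsec:induction}. The two formulas are reverse to one another via Remark~\ref{rem:reversal}, exactly as $\positiveFacet$ and $\negativeFacet$ are, so it suffices to establish the formula for $\positiveFacet$ and then translate. Since the base case $\positiveFacet[\varepsilon,e] = \varnothing$ is immediate, the whole statement is an induction on the length $m$ of the word $\Q$.

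First I would fix a non-empty word $\Q = \q_1 \cdots \q_m$ and recall from Section~\ref{subsec:induction} that exactly one of three situations occurs: (i) $\rho \not\prec \Q_\eraseLast$, in which case $\facets = \facets[\Q_\eraseLast, \rho q_m]$; (ii) $\length(\rho q_m) > \length(\rho)$, in which case every facet contains $m$ and $\facets = \facets[\Q_\eraseLast, \rho] \join m$; (iii) otherwise, $\facets = \facets[\Q_\eraseLast, \rho q_m] \sqcup \big(\facets[\Q_\eraseLast, \rho] \join m\big)$. In case (ii), $m$ appears in all facets of $\subwordComplex$, and the lexicographically smallest facet is obtained by taking the lexicographically smallest facet of $\subwordComplex[\Q_\eraseLast, \rho]$ and adjoining the position $m$; since adjoining a common largest element $m$ does not affect the lexicographic comparison of the facets coming from $\facets[\Q_\eraseLast,\rho]$, this gives $\positiveFacet = \positiveFacet[\Q_\eraseLast, \rho] \cup m$, matching the first branch. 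In cases (i) and (iii), I claim the lexicographically smallest facet of $\subwordComplex$ does not contain $m$: in case (i) this is vacuous since no facet contains $m$, and in case (iii) any facet in $\facets[\Q_\eraseLast,\rho]\join m$ contains $m$, while the facets in $\facets[\Q_\eraseLast,\rho q_m]$ do not, and a facet not containing $m$ is lexicographically smaller than one of the same cardinality (all facets of a subword complex have the same size, namely $\ell(\rho)$ — or one has to be slightly careful here, see below) that does contain $m$; hence the lexicographically smallest facet of $\subwordComplex$ is the lexicographically smallest facet of $\subwordComplex[\Q_\eraseLast, \rho q_m]$, which is $\positiveFacet[\Q_\eraseLast, \rho q_m]$ by the induction hypothesis, matching the second branch. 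Note that in cases (i) and (iii), $m$ does not appear in all facets, so ``otherwise'' in the lemma is exactly the union of cases (i) and (iii), and the case split is consistent. Finally, for the formula for $\negativeFacet$, I would apply the reversal operation of Remark~\ref{rem:reversal}: $\negativeFacet[\q_m \cdots \q_1, \rho^{-1}] = \set{m+1-p}{p \in \positiveFacet[\q_1 \cdots \q_m, \rho]}$, under which deleting the last letter of $\Q$ corresponds to deleting the first letter of the reversed word, the condition $\length(\rho q_m) > \length(\rho)$ becomes $\length(q_1 \rho) > \length(\rho)$, and the right shift $\shiftRight{(\cdot)}$ records the reindexing; tracking these through the first formula yields the second.

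The main obstacle I anticipate is the comparison of facets of possibly different cardinalities and the precise handling of the case $\rho \not\prec \Q_\eraseLast$: when passing to $\subwordComplex[\Q_\eraseLast, \rho q_m]$ one must check that $\rho q_m \prec \Q_\eraseLast$ so that this subword complex is non-empty and the induction hypothesis applies, and one must make sure the lexicographic argument ``a facet avoiding $m$ beats one containing $m$'' is stated for facets of the same size (all facets of $\subwordComplex$ have $m - \ell(\rho)$ elements, so this is fine, but it deserves an explicit sentence). A cleaner route that sidesteps part of this is to argue directly from Proposition~\ref{prop:increasingFlipGraph}: the lemma's formulas are precisely the specialization to greedy (lex-smallest/largest) facets of the three-way decomposition of $\facets$, so once one observes that the lex-minimum of a disjoint union is the lex-minimum of the part consisting of the ``$m$-free'' facets, everything follows. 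I expect the write-up to be short, essentially the paragraph above made precise, with the bulk of the work being the bookkeeping of the reversal in Remark~\ref{rem:reversal} for the $\negativeFacet$ formula.
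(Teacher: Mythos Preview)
Your overall strategy is the same as the paper's: Lemma~\ref{lem:greedy1} is stated there as a direct consequence of the definition of greedy facets (equivalently, the lex-extremal characterization in Proposition~\ref{prop:increasingFlipGraph}) together with the inductive decomposition of~$\facets$ from Section~\ref{subsec:induction}, and the reversal of Remark~\ref{rem:reversal} handles~$\negativeFacet$. However, your justification in case~(iii) has a genuine gap. You assert that ``a facet not containing~$m$ is lexicographically smaller than one of the same cardinality that does contain~$m$''. This is false, even for facets of a subword complex: in the paper's own Example~\ref{exm:toto}, the facet $\{1,2,3,7,9\}$ contains $m=9$ yet is lexicographically smaller than the facet $\{1,3,4,5,6\}$, which does not. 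So this step does not establish that the lex-minimum of the disjoint union lies among the $m$-free facets.

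The correct argument is exactly the one carried out inside the proof of Proposition~\ref{prop:increasingFlipGraph}, which you should invoke rather than only its statement: when $\ell(\rho q_m) < \ell(\rho)$, any facet~$\mathsf{P}$ containing~$m$ satisfies $\Root{\mathsf{P}}{m} = \rho(\alpha_{q_m}) \in \Phi^- \cap \rho(\Phi^+) = -\inv(\rho)$, so~$m$ is decreasingly flippable and~$\mathsf{P}$ cannot be a source. Since the lex-smallest facet is the unique source, it must avoid~$m$ and hence equals $\positiveFacet[\Q_\eraseLast,\rho q_m]$. Alternatively, one can observe that the source of~$\flipGraph[\Q_\eraseLast,\rho q_m]$ is automatically a source of~$\flipGraph$ (no incoming edge can come from an $m$-containing facet, since that would require flipping~$m$ out to a larger position), and then appeal to uniqueness. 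Either way, once this step is fixed, the rest of your outline (including the size remark, which should read $m-\ell(\rho)$ rather than $\ell(\rho)$, and the reversal for~$\negativeFacet$) goes through as you describe.
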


\begin{lemma}
\label{lem:greedy2}
The greedy facets~$\positiveFacet$ and~$\negativeFacet$ can be constructed inductively from $\positiveFacet[\varepsilon,e] = \negativeFacet[\varepsilon,e] = \varnothing$ using the following formulas:
\begin{align*}
\hspace{.2cm}
\positiveFacet & = \begin{cases} \shiftRight{\positiveFacet[\Q_\eraseFirst,q_1\rho]} & \text{if } 1 \text{ appears in none of the facets of } \subwordComplex, \\ 1 \cup \shiftRight{\positiveFacet[\Q_\eraseFirst,\rho]} & \text{otherwise.} \end{cases}
\\
\negativeFacet & = \begin{cases} \negativeFacet[\Q_\eraseLast,\rho q_m] & \text{if } m \text{ appears in none of the facets of } \subwordComplex, \\ \negativeFacet[\Q_\eraseLast,\rho] \cup m & \text{otherwise.} \end{cases}
\end{align*}
\end{lemma}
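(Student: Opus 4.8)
The plan is to combine Proposition~\ref{prop:increasingFlipGraph}, which identifies the positive greedy facet $\positiveFacet$ with the lexicographically smallest facet of $\subwordComplex$ and the negative greedy facet $\negativeFacet$ with its lexicographically largest facet, with the inductive decompositions of $\facets$ by the first and last letters of $\Q$ recalled in Section~\ref{subsec:induction}. The reversal operation of Remark~\ref{rem:reversal} exchanges deletion of the first and last letter of $\Q$, turns $q_1\rho$ into $\rho q_m$ (since $q_m\rho^{-1} = (\rho q_m)^{-1}$), identifies the condition ``$1$ appears in no facet of $\subwordComplex[\q_m\cdots\q_1,\rho^{-1}]$'' with ``$m$ appears in no facet of $\subwordComplex$'', and satisfies $\negativeFacet[\Q,\rho] = \set{m+1-p}{p\in\positiveFacet[\q_m\cdots\q_1,\rho^{-1}]}$; applying the $\positiveFacet$ formulas to $\subwordComplex[\q_m\cdots\q_1,\rho^{-1}]$ and translating back through these correspondences yields precisely the two claimed $\negativeFacet$ formulas. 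It therefore suffices to prove the two formulas for $\positiveFacet$, the base case $\Q=\varepsilon$ being $\positiveFacet[\varepsilon,e]=\varnothing$.

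For $\positiveFacet$, the decisive observation is that $\positiveFacet$ contains position~$1$ if and only if some facet of $\subwordComplex$ does. Indeed, all facets of $\subwordComplex$ have the same cardinality, so in a lexicographic comparison of two facets the one with the smaller minimal element is the smaller; as $1$ is the smallest position, every facet containing $1$ precedes every facet avoiding $1$, hence the lexicographically smallest facet contains $1$ as soon as one facet does. If $1$ appears in no facet --- case~(i) of the first-letter decomposition --- then $\facets = \shiftRight{\facets[\Q_\eraseFirst, q_1\rho]}$, and since $X\mapsto\shiftRight{X}$ is a lexicographic-order isomorphism onto its image, the smallest facet is $\shiftRight{\positiveFacet[\Q_\eraseFirst, q_1\rho]}$. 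Otherwise $\positiveFacet$ contains $1$, and the facets of $\subwordComplex$ containing $1$ form exactly the nonempty family $1\join\shiftRight{\facets[\Q_\eraseFirst,\rho]}$ --- this is immediate from cases~(ii) and~(iii) of the first-letter decomposition, since all facets in $\shiftRight{\facets[\Q_\eraseFirst, q_1\rho]}$ live in $\{2,\dots,m\}$; as $X\mapsto 1\cup\shiftRight{X}$ is again order-preserving on subsets of $[m-1]$, the smallest such facet is $1\cup\shiftRight{\positiveFacet[\Q_\eraseFirst,\rho]}$. This proves the $\positiveFacet$ formula, and with it the lemma.

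The step I expect to require the most care --- and the reason I prefer to obtain the $\negativeFacet$ formula by reversal rather than mimicking the $\positiveFacet$ argument --- is the ``otherwise'' case for $\negativeFacet$: in contrast to what happens for position~$1$, the lexicographically largest facet need not contain the largest position~$m$ merely because some facet does, so there is no direct lexicographic shortcut. If a self-contained argument is wanted, one proves instead that $N' \eqdef \negativeFacet[\Q_\eraseLast,\rho]\cup\{m\}$ is a sink of $\flipGraph$ and invokes uniqueness of the sink. Any flip of $N'$ at a position below~$m$ cannot involve~$m$ (the complement of $N'$ lies in $[m-1]$), so it restricts to a flip of $\negativeFacet[\Q_\eraseLast,\rho]$ in $\flipGraph[\Q_\eraseLast,\rho]$ and is therefore decreasing; and if $m$ is flippable in $N'$ then $\Root{N'}{m} = \rho(\alpha_{q_m})$, which by Proposition~\ref{prop:roots&flips} lies in $\Phi^+\ssm\inv(\rho)$ when $\ell(\rho q_m)>\ell(\rho)$ --- so $m$ is in fact unflippable, this being the case where $m$ lies in all facets anyway --- and in $-\inv(\rho)$ when $\ell(\rho q_m)<\ell(\rho)$, forcing the flip of $m$ to be decreasing. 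Hence every flip of $N'$ is decreasing, so $N' = \negativeFacet$, which is the desired formula in the ``otherwise'' case.
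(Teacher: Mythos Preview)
Your proof is correct and follows the same approach the paper indicates: the paper states only that Lemmas~\ref{lem:greedy1} and~\ref{lem:greedy2} are ``direct consequences of the definition of the greedy facets and of the induction formulas for the facets~$\facets$ presented in Section~\ref{subsec:induction}'', and your argument spells out precisely those details, using the lexicographic characterization from Proposition~\ref{prop:increasingFlipGraph} together with the first-letter decomposition, then invoking the reversal of Remark~\ref{rem:reversal} for~$\negativeFacet$. Your supplementary sink argument for the ``otherwise'' case of~$\negativeFacet$ is a correct and pleasant addition (and mirrors the computation in the proof of Proposition~\ref{prop:increasingFlipGraph}), but it is not needed once reversal is in place.
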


Lemmas~\ref{lem:greedy1} and~\ref{lem:greedy2} can be reformulated to obtain greedy sweep procedures on the word~$\Q$ itself, avoiding the use of induction.
Namely, the positive greedy facet~$\positiveFacet$ is obtained: 
\begin{enumerate}
\item either sweeping~$\Q$ from right to left placing inversions as soon as possible,
\item or sweeping~$\Q$ from left to right placing non-inversions as long as possible.
\end{enumerate}
The negative greedy facet is obtained similarly, reversing the directions of the sweeps.

\newpage
We have seen in Theorem~\ref{thm:ELlabeling} that for any two facets~$I,J \in \facets$ such that ${I \directedPath J}$, there is a $\positiveEdgeLabel$-rising (resp.~$\negativeEdgeLabel$-rising) path from~$I$ to~$J$.
In particular, there is always a $\positiveEdgeLabel$-rising (resp.~$\negativeEdgeLabel$-rising) path from~$\positiveFacet$ to~$\negativeFacet$.
We will now show that there is also at least one $\positiveEdgeLabel$-falling (resp.~$\negativeEdgeLabel$-falling) path from~$\positiveFacet$ to~$\negativeFacet$ if the subword complex~$\subwordComplex$ is spherical.

\begin{proposition}
\label{prop:fallingPath}
For any spherical subword complex~$\subwordComplex$, there is always a $\positiveEdgeLabel$-falling and an $\negativeEdgeLabel$-falling path from~$\positiveFacet$ to~$\negativeFacet$.
\end{proposition}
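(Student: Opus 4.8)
The plan is to prove the statement for the positive edge labeling~$\positiveEdgeLabel$; the case of~$\negativeEdgeLabel$ then follows by the reversal operation of Remark~\ref{rem:reversal}, which exchanges~$\positiveEdgeLabel$ with~$\negativeEdgeLabel$, exchanges~$\positiveFacet$ with~$\negativeFacet$, sends spherical subword complexes to spherical subword complexes, and turns $\positiveEdgeLabel$-falling paths into $\negativeEdgeLabel$-falling paths. For the positive side, I would induct on the length of the word~$\Q$, peeling off its first letter~$\q_1$ and using the inductive decomposition of~$\facets$ recalled in Section~\ref{subsec:induction} together with the inductive descriptions of~$\positiveFacet$ and~$\negativeFacet$ from Lemmas~\ref{lem:greedy1} and~\ref{lem:greedy2}. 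The single place where sphericity enters is to rule out the ``cone'' branch of that decomposition: if position~$1$ belongs to every facet, then~$1$ is unflippable in every facet, so by Proposition~\ref{prop:roots&flips}\eqref{prop:roots&flips:flippable} its root lies in $\Phi^+ \ssm \inv(\rho)$, whence $\subwordComplex$ is not spherical. The ``ghost'' branch (position~$1$ in no facet, \ie $\rho \not\prec \Q_\eraseFirst$) identifies $\subwordComplex$ with the still-spherical complex $\subwordComplex[\Q_\eraseFirst, \q_1\rho]$ on positions $\{2,\dots,m\}$, with the same labeled increasing flip graph, so the inductive hypothesis applies directly.

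The substantive case is $\length(\q_1\rho) < \length(\rho)$ with $\rho \prec \Q_\eraseFirst$, where the facets of~$\subwordComplex$ split into those containing~$1$ --- which, after deleting~$1$ and shifting, form a copy of $\subwordComplex[\Q_\eraseFirst,\rho]$ --- and those avoiding~$1$ --- a copy of $\subwordComplex[\Q_\eraseFirst, \q_1\rho]$ --- while the flips joining the two families are exactly those flipping out position~$1$. Here $\positiveFacet$ contains~$1$ and $\negativeFacet$ does not. Two elementary observations pin down the shape of any $\positiveEdgeLabel$-falling path from~$\positiveFacet$ to~$\negativeFacet$: flipping out position~$1$ is always an increasing flip carrying the $\positiveEdgeLabel$-label~$1$, the smallest possible label; and position~$1$ can never be flipped \emph{in} by an increasing flip, so once flipped out it stays out. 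Hence along such a path the label~$1$ occurs exactly once, necessarily as the very last step; the path therefore consists of a $\positiveEdgeLabel$-falling path inside the ``contains~$1$'' family from~$\positiveFacet$ to some facet~$G$, followed by the single flip $G \edge \negativeFacet$ flipping out~$1$. Comparing the subword complexes on~$\Q_\eraseFirst$ shows that the ``contains~$1$'' family is a copy of a spherical subword complex $\subwordComplex[\Q_\eraseFirst,\rho]$ whose source corresponds to~$\positiveFacet$, so the inductive hypothesis provides a $\positiveEdgeLabel$-falling path from~$\positiveFacet$ to the sink of that family; it remains only to check that flipping out~$1$ from this sink lands exactly on~$\negativeFacet$, which should follow by unwinding Lemmas~\ref{lem:greedy1} and~\ref{lem:greedy2}.

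I expect the main obstacle to be precisely this last piece of bookkeeping: showing that the crossing facet~$G$ forced by the falling condition coincides with the negative greedy facet of the ``contains~$1$'' family, equivalently that flipping out position~$1$ from the sink of that family reaches the global sink~$\negativeFacet$. If the clean identity turns out to fail, the fallback is to produce a $\positiveEdgeLabel$-falling path inside the ``contains~$1$'' family from its source to the prescribed facet~$G$ directly, which I would do exactly as in the proof of Theorem~\ref{thm:ELlabeling}: restrict to suitable dihedral parabolic subsystems via Proposition~\ref{prop:restriction} and repeatedly replace a $\positiveEdgeLabel$-rising pair of consecutive flips by a $\positiveEdgeLabel$-falling subpath using the dihedral exchange of Lemma~\ref{lem:dihedral}. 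The remaining verifications --- that the inductively obtained labels concatenate into a weakly decreasing sequence, and the base case $\Q = \varepsilon$ --- are routine.
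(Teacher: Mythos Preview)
Your inductive approach, peeling off the first letter and recursing on the ``contains~$1$'' family, is viable and genuinely different from the paper's proof. The paper argues directly: it flips out the positions of~$\positiveFacet$ in decreasing order and, by tracking the root function via Proposition~\ref{prop:roots&flips}, shows that every root at a flipped-in position stays negative along the way, so the endpoint is~$\negativeFacet$. Your version trades this root bookkeeping for a recursive structure; for it to go through you must also check that~$\subwordComplex[\Q_\eraseFirst,\rho]$ is again spherical in the substantive branch, which you assert but do not justify. It is true: since~$\rho \prec \Q_\eraseFirst$ and~$\ell(q_1\rho)<\ell(\rho)$, the Demazure product of~$\Q_\eraseFirst$ is forced to be~$\rho$; or, more concretely, every position~$>1$ of a facet containing~$1$ is flippable in the spherical complex~$\subwordComplex$ and the flip stays in that family.

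Two cautions. First, the step you flag --- that flipping out~$1$ from the sink~$G$ of the ``contains~$1$'' family lands exactly on~$\negativeFacet$ --- does \emph{not} follow from Lemmas~\ref{lem:greedy1} and~\ref{lem:greedy2}; those only describe~$\positiveFacet$ and~$\negativeFacet$ recursively, not what happens when you flip. What you need is a variant of the greedy flip property (Proposition~\ref{prop:gfp}) for the \emph{negative} greedy facet at the \emph{first} position, which the paper does not state but whose proof is analogous: the roots~$\Root{G}{g}$ for~$g \in G \ssm 1$ are all negative (since~$G$ is the sink of a spherical subcomplex), and none of them equals~$-\alpha_{q_1}$ between~$1$ and the flip target~$j$ (else flipping that~$g$ would be an increasing flip to~$j$ inside the family); hence after applying~$s_{\alpha_{q_1}}$ via Proposition~\ref{prop:roots&flips}\eqref{prop:roots&flips:update} all roots of~$(G \ssm 1) \cup j$ remain negative, so this facet is~$\negativeFacet$. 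Second, your fallback is mistaken: Lemma~\ref{lem:dihedral} turns a $\positiveEdgeLabel$-\emph{falling} pair into a $\positiveEdgeLabel$-\emph{rising} path, not the reverse, and there is in general no way to manufacture falling paths from rising ones (already in rank two there are intervals admitting a rising but no falling path). Fortunately the fallback is not needed, since the identity above does hold.
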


\begin{proof}
Since the subword complex~$\subwordComplex$ is spherical, recall that any position in any facet of~$\subwordComplex$ is flippable.
We will prove that starting from the positive greedy facet~$\positiveFacet$ and successively flipping all its positions in decreasing order yields the negative greedy facet~$\negativeFacet$, thus providing a $\positiveEdgeLabel$-falling path from~$\positiveFacet$ to~$\negativeFacet$.

Let~$\ell \eqdef |Q|-\ell(\rho)$ denote the size of each facet of~$\subwordComplex$.
Let~${\positiveEdgeLabel_1 > \dots > \positiveEdgeLabel_\ell}$ denote the positions of the positive greedy facet~$\positiveFacet$ in decreasing order.
We consider the $\positiveEdgeLabel$-falling path ${\positiveFacet = I_1 \edge \cdots \edge I_{\ell+1}}$ defined by $\positiveEdgeLabel(I_k \sep I_{k+1}) = \positiveEdgeLabel_k$.
We also set $\negativeEdgeLabel_k \eqdef \negativeEdgeLabel(I_k \sep I_{k+1})$.
By definition, we have ${I_k = \{\negativeEdgeLabel_1,\dots,\negativeEdgeLabel_{k-1},\positiveEdgeLabel_k,\dots,\positiveEdgeLabel_\ell \}}$.
We will prove that the root~$\Root{I_k}{\negativeEdgeLabel_j}$ is negative for any~$j < k \in [\ell+1]$.
This implies in particular that~$I_{\ell+1}$ is the negative greedy facet~$\negativeFacet$.

\newcommand{\x}{\mathsf{x}}
To see this, fix~$j \in [\ell]$. For any~$k \in [j+1,\ell+1]$, denote by~$\x_k$ the position in the complement of~$I_k$ such that~$\Root{I_k}{\x_k} = \pm \Root{I_k}{\negativeEdgeLabel_j}$.
We prove by induction on~$k$ that~$\positiveEdgeLabel_k < \x_k < \negativeEdgeLabel_j$, and thus (by Proposition~\ref{prop:roots&flips}\eqref{prop:roots&flips:increasing}) that the root~$\Root{I_k}{\negativeEdgeLabel_j} = -\Root{I_k}{\x_k}$ is negative for any~$j < k \le \ell + 1$.
First, this is immediate for~${k = j+1}$ since~$\x_{j+1} = \positiveEdgeLabel_j$ (because we just flipped out~$\positiveEdgeLabel_j$ to flip in~$\negativeEdgeLabel_j$ in~$I_j$) and ${\positiveEdgeLabel_{j+1} < \positiveEdgeLabel_j < \negativeEdgeLabel_j}$.
Assume now that we proved that~$\positiveEdgeLabel_k < \x_k < \negativeEdgeLabel_j$ for a certain~$k$.
We distinguish two cases:
\begin{enumerate}[(i)]
\item If~${\negativeEdgeLabel_k < \negativeEdgeLabel_j}$, then $\Root{I_{k+1}}{\negativeEdgeLabel_j} = \Root{I_{k}}{\negativeEdgeLabel_j}$ by Proposition~\ref{prop:roots&flips}\eqref{prop:roots&flips:update}.
Since this root is negative, Proposition~\ref{prop:roots&flips}\eqref{prop:roots&flips:increasing} ensures that~$\x_{k+1} < \negativeEdgeLabel_j$.
Moreover, if~${\x_{k+1} \le \positiveEdgeLabel_{k+1}}$, then we would have~$\x_{k+1} < \positiveEdgeLabel_k$, and thus Proposition~\ref{prop:roots&flips}\eqref{prop:roots&flips:update} would give
$$\Root{I_k}{\x_{k+1}} = \Root{I_{k+1}}{\x_{k+1}} = -\Root{I_{k+1}}{\negativeEdgeLabel_j} = -\Root{I_{k}}{\negativeEdgeLabel_j}.$$
By definition, this would imply that~$\x_k = \x_{k+1} < \positiveEdgeLabel_k$, contradicting the induction hypothesis.
\item If~${\negativeEdgeLabel_k > \negativeEdgeLabel_j}$, then we have~$\positiveEdgeLabel_k < \x_k < \negativeEdgeLabel_j < \negativeEdgeLabel_k$. Therefore, Proposition~\ref{prop:roots&flips}\eqref{prop:roots&flips:update} ensures that
$$\Root{I_{k+1}}{\x_k} = s_{\Root{I_k}{\positiveEdgeLabel_k}}(\Root{I_k}{\x_k}) = -s_{\Root{I_k}{\positiveEdgeLabel_k}}(\Root{I_k}{\negativeEdgeLabel_j}) = -\Root{I_{k+1}}{\negativeEdgeLabel_j}.$$
By definition, this implies that~$\x_{k+1} = \x_k$.
\end{enumerate}
In both cases, we obtained that~$\positiveEdgeLabel_{k+1} < \x_{k+1} < \negativeEdgeLabel_j$, thus concluding our inductive argument.

The proof for the $\negativeEdgeLabel$-falling path is similar.
\end{proof}

Note that this proposition fails if we drop the condition that~$\subwordComplex$ is spherical, as illustrated in the subword complex~$\subwordComplex[\Qex,\rhoex]$ of Example~\ref{exm:toto}.
A smaller example is given by the subword complex~$\subwordComplex[\sq{\tau}_1\sq{\tau}_2\sq{\tau}_1\sq{\tau}_2, \tau_1\tau_2]$.


\subsection{Spanning trees}
\label{subsec:spanningTrees}

As discussed in Remark~\ref{rem:spanningTrees}, the edge labelings~$\positiveEdgeLabel$ and~$\negativeEdgeLabel$ automatically produce canonical spanning trees of any interval of the increasing flip graph~$\flipGraph$.
Since~$\flipGraph$ has a unique source~$\positiveFacet$ and a unique sink~$\negativeFacet$, we obtain in particular four spanning trees of the graph~$\flipGraph$ itself.
The goal of this section is to give alternative descriptions of these four spanning trees.

We call respectively \defn{positive source tree}, \defn{positive sink tree}, \defn{negative source tree}, and \defn{negative sink tree}, and denote respectively by~$\positiveSourceTree$, $\positiveSinkTree$, $\negativeSourceTree$, and~$\negativeSinkTree$, the $\positiveEdgeLabel$-source, $\positiveEdgeLabel$-sink, $\negativeEdgeLabel$-source, and $\negativeEdgeLabel$-sink trees of~$\flipGraph$.
The tree~$\positiveSourceTree$ (resp.~$\negativeSourceTree$) is formed by all $\positiveEdgeLabel$-rising (resp.~$\negativeEdgeLabel$-rising) paths from the positive greedy facet~$\positiveFacet$ to all the facets of~$\subwordComplex$.
Both~$\positiveSourceTree$ and~$\negativeSourceTree$ are rooted at and directed away from the positive greedy facet~$\positiveFacet$.
The tree~$\positiveSinkTree$ (resp.~$\negativeSinkTree$) is formed by all $\positiveEdgeLabel$-rising (resp.~$\negativeEdgeLabel$-rising) paths from all the facets of~$\subwordComplex$ to the negative greedy facet~$\negativeFacet$.
Both~$\positiveSinkTree$ and $\negativeSinkTree$ are rooted at and directed towards the negative greedy facet~$\negativeFacet$.

The positive source and negative sink trees (resp.~the positive sink and the negative source trees) are reverse to one another (see Remark~\ref{rem:reversal}).
Namely, as we already observed, $I \edge J$ is an edge in the increasing flip graph $\flipGraph[\q_m \cdots \q_1, \rho^{-1}]$ if and only if $J' \eqdef \set{m+1-j}{j \in J} \edge I' \eqdef \set{m+1-i}{i \in I}$ is an edge in the increasing flip graph $\flipGraph[\q_1 \cdots \q_m, \rho]$.
Moreover, $I \edge J$ belongs to~$\positiveSourceTree[\q_m \cdots \q_1, \rho^{-1}]$ if and only if $J' \edge I'$ belongs to~$\negativeSinkTree[\q_1 \cdots \q_m, \rho]$.
Similarly, $I \edge J$ belongs to~$\positiveSinkTree[\q_m \cdots \q_1, \rho^{-1}]$ if and only if $J' \edge I'$ belongs to~$\negativeSourceTree[\q_1 \cdots \q_m, \rho]$.

\begin{example}
\label{exm:totoTrees}
Consider the subword complex~$\subwordComplex[\Qex,\rhoex]$ from Example~\ref{exm:toto}.
Figures~\ref{fig:positiveSourceTree}, \ref{fig:positiveSinkTree}, \ref{fig:negativeSourceTree}, and~\ref{fig:negativeSinkTree} represent respectively the trees~$\positiveSourceTree[\Qex,\rhoex]$, $\positiveSinkTree[\Qex,\rhoex]$, $\negativeSourceTree[\Qex,\rhoex]$, and~$\negativeSinkTree[\Qex,\rhoex]$.
Observe that these four canonical spanning trees of~$\flipGraph$ are all different in general.
\begin{figure}[p]
  \centerline{\includegraphics[width=\textwidth]{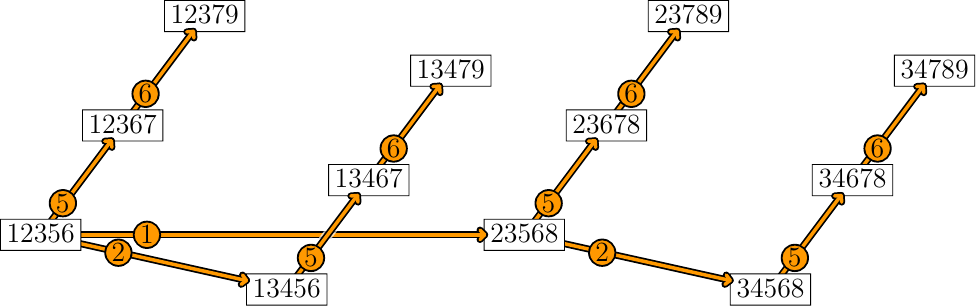}}
  \caption{The positive source tree~$\positiveSourceTree[\Qex,\rhoex]$.}
  \label{fig:positiveSourceTree}
\end{figure}
\begin{figure}[p]
  \centerline{\includegraphics[width=\textwidth]{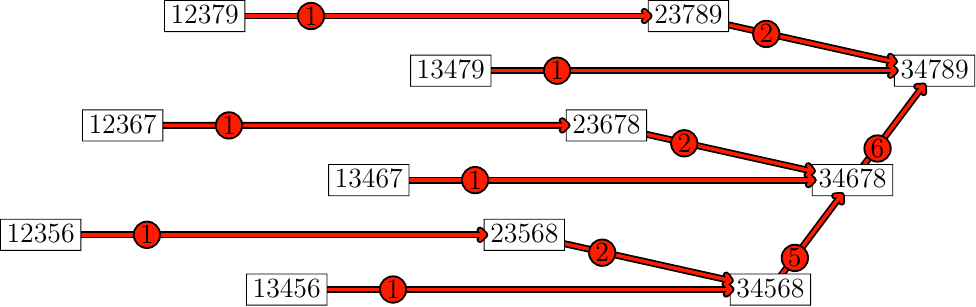}}
  \caption{The positive sink tree~$\positiveSinkTree[\Qex,\rhoex]$.}
  \label{fig:positiveSinkTree}
\end{figure}
\begin{figure}[p]
  \centerline{\includegraphics[width=\textwidth]{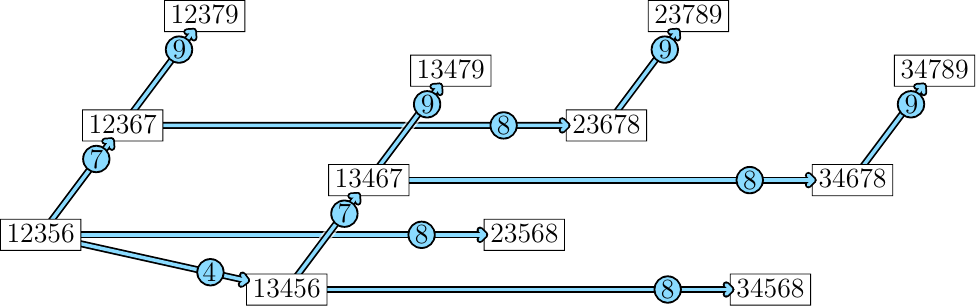}}
  \caption{The negative source tree~$\negativeSourceTree[\Qex,\rhoex]$.}
  \label{fig:negativeSourceTree}
\end{figure}
\begin{figure}[p]
  \centerline{\includegraphics[width=\textwidth]{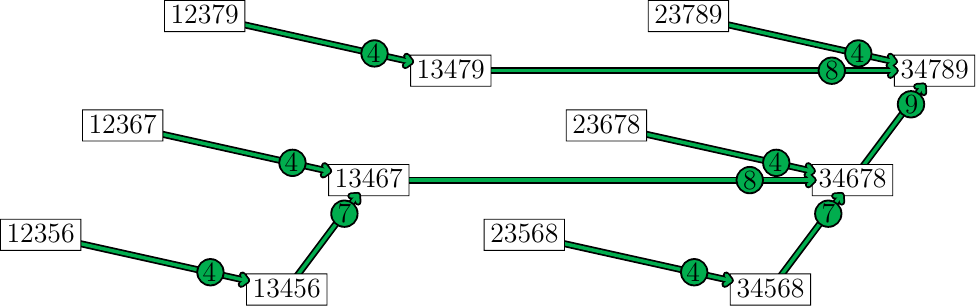}}
  \caption{The negative sink tree~$\negativeSinkTree[\Qex,\rhoex]$.}
  \label{fig:negativeSinkTree}
\end{figure}
\end{example}

We now give a direct description of the father of a facet~$I$ in~$\positiveSinkTree$ and~$\negativeSourceTree$ in terms of~$I \ssm \negativeFacet$ and~$I \ssm \positiveFacet$.

\begin{proposition}
Let~$I$ be a facet of~$\subwordComplex$.
If~$I \ne \negativeFacet$, then the father of~$I$ in~$\positiveSinkTree$ is obtained from~$I$ by flipping the smallest position in~$I \ssm \negativeFacet$.
Similarly, if~$I \ne \positiveFacet$, then the father of~$I$ in~$\negativeSourceTree$ is obtained from~$I$ by flipping the largest position in~$I \ssm \positiveFacet$.
\end{proposition}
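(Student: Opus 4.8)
The plan is to prove the statement for $\positiveSinkTree$ and then invoke the reversal operation of Remark~\ref{rem:reversal} for $\negativeSourceTree$. Recall that the father of $I$ in $\positiveSinkTree$ is, by definition (Remark~\ref{rem:spanningTrees}), the facet $J$ obtained from $I$ by the first flip of the unique $\positiveEdgeLabel$-rising path from $I$ to $\negativeFacet$. So the task is to identify the label $\positiveEdgeLabel(I \sep J)$ of that first flip and the facet $J$ it produces. Set $\ell \eqdef |\Q| - \ell(\rho)$, so $|I| = |\negativeFacet| = \ell$.

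The key computation relies on Proposition~\ref{prop:unicity}. Writing the unique $\positiveEdgeLabel$-rising path from $I$ to $\negativeFacet$ as $I = I_1 \edge \cdots \edge I_{\ell'+1} = \negativeFacet$ with labels $\positiveEdgeLabel_k \eqdef \positiveEdgeLabel(I_k \sep I_{k+1})$, Proposition~\ref{prop:unicity} tells us that $\positiveEdgeLabel_1 = \min(I_1 \ssm I_{\ell'+1}) = \min(I \ssm \negativeFacet)$. So the first flip of the $\positiveEdgeLabel$-rising path indeed flips out the smallest position $i \eqdef \min(I \ssm \negativeFacet)$ of $I \ssm \negativeFacet$; this position exists precisely because $I \ne \negativeFacet$. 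I must also check that $i$ is flippable in $I$ in an increasing direction, i.e. that the flip $I \edge J$ obtained by flipping out $i$ is an edge of the increasing flip graph $\flipGraph$. For this, note that $i \notin \negativeFacet$ and $\negativeFacet$ is the unique sink, so from $\negativeFacet$ there is no increasing flip; more to the point, since $I \directedPath \negativeFacet$ and the $\positiveEdgeLabel$-rising path from $I$ to $\negativeFacet$ exists by Theorem~\ref{thm:ELlabeling}, its first edge is an increasing flip with positive label $\min(I \ssm \negativeFacet) = i$, so $i$ is flippable and its flip is increasing, and $J$ is precisely the facet reached by that edge. Thus the father of $I$ in $\positiveSinkTree$ is obtained by flipping $i = \min(I \ssm \negativeFacet)$, as claimed.

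The statement about $\negativeSourceTree$ follows by reversal. By the correspondence recorded just before Example~\ref{exm:totoTrees}, the edge $I \edge J$ belongs to $\negativeSourceTree[\q_1 \cdots \q_m, \rho]$ if and only if $J' \edge I'$ belongs to $\positiveSinkTree[\q_m \cdots \q_1, \rho^{-1}]$, where $X' \eqdef \set{m+1-x}{x \in X}$. Applying the already-proven statement for $\positiveSinkTree$ to the reversed subword complex, the father $I'$ of $J'$ in $\positiveSinkTree[\q_m \cdots \q_1, \rho^{-1}]$ is obtained by flipping $\min\!\big(J' \ssm \negativeFacet[\q_m \cdots \q_1, \rho^{-1}]\big)$; translating back under $x \mapsto m+1-x$, which reverses the order and sends $\negativeFacet[\q_m \cdots \q_1, \rho^{-1}]$ to $\positiveFacet[\q_1 \cdots \q_m, \rho]$, this becomes: the father of $I$ in $\negativeSourceTree$ is obtained by flipping $\max(I \ssm \positiveFacet)$, which exists since $I \ne \positiveFacet$.

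The main obstacle is the bookkeeping in the first part: one must be careful that $\min(I \ssm \negativeFacet)$ is genuinely the label of the \emph{first} edge of the $\positiveEdgeLabel$-rising path and that this first edge is an increasing flip producing a valid facet $J$ of $\subwordComplex$ — everything else is a direct citation of Proposition~\ref{prop:unicity}, Theorem~\ref{thm:ELlabeling}, Remark~\ref{rem:spanningTrees}, and the reversal dictionary of Remark~\ref{rem:reversal}. A subtlety to double-check is that along the $\positiveEdgeLabel$-rising path the position $i$, once flipped out, is never flipped back in (guaranteed since all flips on the path are increasing, so $i = \positiveEdgeLabel_1 < \positiveEdgeLabel_2 < \cdots$), so that $J \ssm I$ consists of the single position $\negativeEdgeLabel(I \sep J) > i$ and $J$ is well-defined.
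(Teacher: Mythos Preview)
Your argument for $\positiveSinkTree$ is correct and is exactly the paper's proof: the father of $I$ is the next facet on the unique $\positiveEdgeLabel$-rising path to $\negativeFacet$, and Proposition~\ref{prop:unicity} gives $\positiveEdgeLabel_1 = \min(I \ssm \negativeFacet)$. The extra checks you add (that $i$ is flippable and that the flip is increasing) are redundant, since the existence of the $\positiveEdgeLabel$-rising path already guarantees them.

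For $\negativeSourceTree$ you take a detour through reversal, whereas the paper applies Proposition~\ref{prop:unicity} directly and symmetrically: the father of $I$ in $\negativeSourceTree$ is the facet $I_\ell$ preceding $I = I_{\ell+1}$ on the unique $\negativeEdgeLabel$-rising path from $\positiveFacet$, and Proposition~\ref{prop:unicity} gives $\negativeEdgeLabel_\ell = \max(I_{\ell+1} \ssm I_1) = \max(I \ssm \positiveFacet)$, which is precisely the position one flips out of $I$ to recover $I_\ell$. Your reversal route is valid, but your write-up has a variable-naming slip: you set up a generic edge $I \edge J$ in $\negativeSourceTree$, so $I$ is the father of $J$; the translated conclusion should therefore read ``the father of $J$ in $\negativeSourceTree$ is obtained by flipping $\max(J \ssm \positiveFacet)$'', not ``the father of $I$''. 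The final statement you want does follow once you rename $J$ back to the $I$ of the proposition, but as written the roles of $I$ and $J$ are tangled. The paper's direct symmetric use of Proposition~\ref{prop:unicity} avoids this bookkeeping entirely.
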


\begin{proof}
Since the father of~$I$ in~$\positiveSinkTree$ (resp.~in~$\negativeSourceTree$) is the facet next to~$I$ on the unique $\positiveEdgeLabel$-rising path towards~$\negativeFacet$ (resp.~the facet previous to~$I$ on the unique $\negativeEdgeLabel$-rising path from~$\positiveFacet$), this is a direct consequence of Proposition~\ref{prop:unicity}.
\end{proof}

We now focus on the positive source tree~$\positiveSourceTree$ and on the negative sink tree $\negativeSinkTree$, and provide two different descriptions of them.
The first is an inductive description of~$\positiveSourceTree$ and~$\negativeSinkTree$ (see Propositions~\ref{prop:inductiveNegativeTree} and~\ref{prop:inductivePositiveTree}).
The second is a direct description of the father of a facet~$I$ in~$\positiveSourceTree$ and~$\negativeSinkTree$ in terms of greedy prefixes and suffixes of~$I$ (see Propositions~\ref{prop:characterizationFatherNegative} and~\ref{prop:characterizationFatherPositive}).
These descriptions mainly rely on the following property of the greedy facets.

\begin{proposition}
\label{prop:gfp}
If~$m$ is a flippable position of~$\negativeFacet$, then~$\negativeFacet[\Q_\eraseLast,\rho q_m]$ is obtained from~$\negativeFacet$ by flipping~$m$.
Similarly, if~$1$ is a flippable position of~$\positiveFacet$, then~$\positiveFacet[\Q_\eraseFirst,q_1\rho]$ is obtained from~$\positiveFacet$ by flipping~$1$ and shifting to the left.
\end{proposition}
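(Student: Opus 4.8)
The plan is to prove only the first assertion; the second one then follows by the reversal of Remark~\ref{rem:reversal}. So assume $m$ is a flippable position of $\negativeFacet \eqdef \negativeFacet[\Q,\rho]$, let $J$ be the facet obtained from $\negativeFacet$ by flipping $m$, and let $j$ be the position it flips in, so that $J = (\negativeFacet \ssm m) \cup j$ with $j \ne m$. The statement to prove is $J = \negativeFacet[\Q_\eraseLast, \rho q_m]$, and the strategy is to recognize $\negativeFacet[\Q_\eraseLast, \rho q_m]$ as the unique sink of $\flipGraph[\Q_\eraseLast, \rho q_m]$ and to show that $J$ is a sink of that graph.

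First I would record a few preliminaries. Since $m$ lies in $\negativeFacet$ but not in $J$, it occurs in some but not all facets of $\subwordComplex$; we are thus in case (iii) of the last-letter decomposition of Section~\ref{subsec:induction}, so $\ell(\rho q_m) < \ell(\rho)$ and the facets of $\subwordComplex[\Q_\eraseLast, \rho q_m]$ are exactly the facets of $\subwordComplex$ avoiding $m$ (in particular $J$ is one of them, so $\subwordComplex[\Q_\eraseLast, \rho q_m]$ is non-empty). Since these are literally the same faces carrying unchanged position labels, the increasing flip graph $\flipGraph[\Q_\eraseLast, \rho q_m]$ coincides with the subgraph of $\flipGraph[\Q,\rho]$ induced on the facets avoiding $m$, with the inherited orientation. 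Moreover, $\negativeFacet$ is the unique sink of $\flipGraph[\Q,\rho]$ by Proposition~\ref{prop:increasingFlipGraph}, so the flip of $m$ in $\negativeFacet$ is decreasing: $j < m$, and $J \edge \negativeFacet$ is an increasing flip of $\flipGraph[\Q,\rho]$ flipping out $j$ and flipping in $m$. Finally $\flipGraph[\Q_\eraseLast, \rho q_m]$ has a unique sink, again by Proposition~\ref{prop:increasingFlipGraph}. It therefore suffices to show that $J$ has no outgoing increasing flip of $\flipGraph[\Q,\rho]$ leading to a facet that avoids $m$.

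For this I would argue by contradiction. Suppose $J \edge J''$ is such a flip, flipping out $k \in J$ and flipping in $k'' \notin J$ with $k < k''$ and $k'' \ne m$; uniqueness of flips (Theorem~\ref{theo:KnutsonMiller}) forces $k \ne j$, since $k = j$ would give $J'' = \negativeFacet$ and $k'' = m$. Put $\gamma \eqdef \Root{J}{k} = \Root{J}{k''} \in \Phi^+$ (Proposition~\ref{prop:roots&flips}). Using the update formula of Proposition~\ref{prop:roots&flips} along the increasing flip $J \edge \negativeFacet$, one computes $\Root{\negativeFacet}{k}$, checks that $k$ is flippable in $\negativeFacet$, and identifies the position $k_3$ that $k$ flips in as the unique position of the complement of $\negativeFacet$ carrying the root $\gamma$ — here one uses that this complement is obtained from the complement of $J$ by deleting $m$ and adjoining $j$, and that $\gamma \ne \Root{J}{j}$ precisely because $k'' \ne m$. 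A short case analysis on the positions of $k$ and $k''$ relative to $j$ then yields $k < k_3$ in every case, so that flipping $k$ out of $\negativeFacet$ is an increasing flip, contradicting that $\negativeFacet$ is a sink. Hence $J$ is a sink of $\flipGraph[\Q_\eraseLast, \rho q_m]$, whence $J = \negativeFacet[\Q_\eraseLast, \rho q_m]$.

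The main obstacle I anticipate is this final case analysis. When $k$ and $k''$ lie on the same side of $j$ (that is, $k'' \le j$ or $j < k$), the update formula gives $\Root{\negativeFacet}{k} = \Root{\negativeFacet}{k''}$ outright, so $k_3 = k''$ and $k < k''$ finishes it. The one delicate case is $k \le j < k''$: there $\Root{\negativeFacet}{k} = \gamma$ while $\Root{\negativeFacet}{k''}$ has been displaced by the reflection $s_{\Root{J}{j}}$, so one cannot identify $k_3$ with $k''$ directly; instead one argues that $k_3$ cannot lie weakly to the left of $j$ — otherwise $k_3$ would be a position of the complement of $J$, strictly left of $j$, with root $\gamma$, contradicting that $k''$ is the unique position of the complement of $J$ with root $\gamma$ (Proposition~\ref{prop:roots&flips}) — so that $k_3 > j \ge k$.
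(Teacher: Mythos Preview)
Your argument is correct and takes a genuinely different route from the paper's. The paper proves the \emph{positive} statement first (noting that the root function is simpler to handle from the left), then deduces the negative one by reversal: it computes the full root function of the shifted facet~$\shiftLeft{J}$ directly from that of~$\positiveFacet$, and uses the single algebraic fact that the simple reflection~$q_1$ sends only~$\alpha_{q_1}$ to a negative root. Since all positions of~$\positiveFacet$ carrying~$\alpha_{q_1}$ lie before~$j$, every root of~$\shiftLeft{J}$ is positive, so~$\shiftLeft{J}$ is the positive greedy facet. This avoids any case analysis.

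Your approach instead works on the negative side and is purely combinatorial: rather than computing all roots of~$J$, you suppose~$J$ admits an outgoing increasing flip within~$\flipGraph[\Q_\eraseLast,\rho q_m]$ and push it back through the update formula to produce an increasing flip out of~$\negativeFacet$. This is a legitimate and self-contained argument, though the case split on the relative positions of~$k$, $k''$, and~$j$ is the price you pay for not exploiting the special property of simple reflections. One small point to tighten in the delicate case $k \le j < k''$: when you assume $k_3 \le j$ and conclude ``$k_3$ would be a position of the complement of~$J$, strictly left of~$j$'', you must first exclude $k_3 = j$, since $j$ lies in the complement of~$\negativeFacet$ but in~$J$ itself. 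This is immediate from your earlier observation that $\gamma \ne \Root{J}{j}$ (so $\Root{\negativeFacet}{j} = \Root{J}{j} \ne \pm\gamma$), but it deserves a word. With that patched, your proof is complete.
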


\begin{proof}
Although the formulation is simpler for the negative greedy facets, the proof is simpler for the positive ones (due to the direction chosen in the definition of the root function).
Assume that~$1$ is a flippable position of~$\positiveFacet$.
Let~$J \in \facets$ and~$j \in J$ be such that $\positiveFacet \ssm 1 = J \ssm j$.
Consider the facet~$\shiftLeft{J}$ of~$\subwordComplex[\Q_\eraseFirst,q_1\rho]$ obtained shifting~$J$ to the left.
Proposition~\ref{prop:roots&flips}\eqref{prop:roots&flips:update} enables us to compute the root function~$\Root{J}{\cdot}$ for~$J$, which in turn gives us the root function for~$\shiftLeft{J}$:
$$\Root{\shiftLeft{J}}{k} = \begin{cases} \Root{\positiveFacet}{k+1} & \text{if } 1 \le k \le j-1, \\ q_1(\Root{\positiveFacet}{k+1}) & \text{otherwise.} \end{cases}$$
Since all positions~$i \in \positiveFacet$ such that~$\Root{\positiveFacet}{i} = \alpha_{q_1}$ are located before~$j$, and since $\alpha_{q_1}$ is the only positive root sent to a negative root by the simple reflection~$q_1$, all roots~$\Root{\shiftLeft{J}}{k}$, for~$k \in \shiftLeft{J}$, are positive.
Consequently,~$\shiftLeft{J} = \positiveFacet[\Q_\eraseFirst,q_1\rho]$.

We obtain the result for negative facets using the reversal operation of Remark~\ref{rem:reversal}.
\end{proof}

\begin{example}
Consider the subword complex~$\subwordComplex[\Qex,\rhoex]$ of Example~\ref{exm:toto}.
Since position $9$ is flippable in~$\negativeFacet[\Qex,\rhoex] = \{3,4,7,8,9\}$, we have $\negativeFacet[\Qex_\eraseLast,\rhoex\tau_1] = \{3,4,6,7,8\}$.
Moreover, since position~$1$~is flippable in~$\positiveFacet[\Qex,\rhoex] = \{1,2,3,5,6\}$, we have $\positiveFacet[\Qex_\eraseFirst,\tau_2\rhoex] = \shiftLeft{\{2,3,5,6,8\}} = \{1,2,4,5,7\}$.
\end{example}

Using Proposition~\ref{prop:gfp}, we can describe inductively the two trees~$\positiveSourceTree$ and $\negativeSinkTree$.
The induction follows the induction formulas for the facets~$\facets$ presented in Section~\ref{subsec:induction}.
For a tree~$\cT$ whose vertices are subsets of~$\Z$ and for an element~$z \in \Z$, we denote by~$\cT \join z = z \join \cT$ the tree with a vertex~$X \cup z$ for each vertex~$X$ of~$\cT$ and an edge~$X \cup z \edge Y \cup z$ for each edge~$X \edge Y$ of~$\cT$.
Similarly, we will denote by~$\shiftRight{\cT}$ the tree with a vertex~$\shiftRight{X} \eqdef \set{x+1}{x \in X}$ for each vertex~$X$ of~$\cT$ and an edge~$\shiftRight{X} \edge \shiftRight{Y}$ for each edge~$X \edge Y$ of~$\cT$.

We start with the inductive description of the negative sink tree~$\negativeSinkTree$, which is based on the right induction formula.
For the empty word~$\varepsilon$, the tree~$\negativeSinkTree[\varepsilon,e]$ is formed by the unique facet~$\varnothing$ of~$\subwordComplex[\varepsilon,e]$, and the tree~$\negativeSinkTree[\varepsilon,\rho]$ is empty if~$\rho \ne e$.
Otherwise, $\negativeSinkTree$ is obtained as follows.

\begin{proposition}
\label{prop:inductiveNegativeTree}
For a non-empty word~$\Q$, the tree~$\negativeSinkTree$ equals
\begin{enumerate}[(i)]
\item $\negativeSinkTree[\Q_\eraseLast, \rho q_m]$ if~$m$ appears in none of the facets of~$\subwordComplex$; \label{prop:inductiveNegativeTree:i}
\item $\negativeSinkTree[\Q_\eraseLast, \rho] \join m$ if~$m$ appears in all the facets of~$\subwordComplex$; \label{prop:inductiveNegativeTree:ii}
\item the disjoint union of~$\negativeSinkTree[\Q_\eraseLast, \rho q_m]$ and~$\negativeSinkTree[\Q_\eraseLast, \rho] \join m$, with an additional edge from~$\negativeFacet[\Q_\eraseLast, \rho q_m]$ to~$\negativeFacet = \negativeFacet[\Q_\eraseLast, \rho] \cup m$, otherwise. \label{prop:inductiveNegativeTree:iii}
\end{enumerate}
\end{proposition}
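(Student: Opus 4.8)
The statement describes the negative sink tree $\negativeSinkTree$ by induction on the length of $\Q$, following the right induction formula for facets from Section~\ref{subsec:induction}. The key observation to exploit is that $\negativeSinkTree$ is, by definition, the union of all $\negativeEdgeLabel$-rising paths from every facet of $\subwordComplex$ to the negative greedy facet $\negativeFacet$, together with the characterization of $\negativeEdgeLabel$-rising paths given in Proposition~\ref{prop:unicity}: a path $I_1 \edge \cdots \edge I_{\ell+1}$ is $\negativeEdgeLabel$-rising if and only if $\negativeEdgeLabel_k = \max(I_{k+1} \ssm I_1)$ at each step. Equivalently, the father of a facet $I \ne \negativeFacet$ in $\negativeSinkTree$ is obtained by performing the unique flip out of $I$ whose $\negativeEdgeLabel$-label equals $\max(I \ssm \negativeFacet)$ (this is precisely the description stated just before Proposition~\ref{prop:gfp}; but here I only need the local step). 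So the whole proof reduces to tracking, for each facet $I$ of $\subwordComplex$, what this father is, and matching it against the claimed inductive structure.

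First I would treat the two easy cases. In case~\eqref{prop:inductiveNegativeTree:i}, $m$ appears in no facet, so $\facets = \facets[\Q_\eraseLast, \rho q_m]$, the flip graphs coincide, and the positions of all facets are the same in both complexes; hence the whole structure of $\negativeEdgeLabel$-rising paths is identical, giving $\negativeSinkTree = \negativeSinkTree[\Q_\eraseLast, \rho q_m]$. In case~\eqref{prop:inductiveNegativeTree:ii}, $m$ appears in every facet, so $\facets = \facets[\Q_\eraseLast, \rho] \join m$; since $m$ is never flipped (it lies in every facet, so it can never be flipped out, and any flip in a facet of $\subwordComplex$ restricts to a flip in the corresponding facet of $\subwordComplex[\Q_\eraseLast, \rho]$ with identical $\positiveEdgeLabel$- and $\negativeEdgeLabel$-labels), the flip graph and all rising paths are obtained from those of $\flipGraph[\Q_\eraseLast, \rho]$ by appending $m$, giving $\negativeSinkTree = \negativeSinkTree[\Q_\eraseLast, \rho] \join m$. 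These two cases should be a couple of sentences each.

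The real case is~\eqref{prop:inductiveNegativeTree:iii}, where $\facets = \facets[\Q_\eraseLast, \rho q_m] \sqcup (\facets[\Q_\eraseLast, \rho] \join m)$. I would split the facets of $\subwordComplex$ into those not containing $m$ (call this set $\mathcal{A}$, identified with $\facets[\Q_\eraseLast, \rho q_m]$) and those containing $m$ (call this set $\mathcal{B}$, identified with $\facets[\Q_\eraseLast, \rho]$ via deleting $m$). The negative greedy facet is $\negativeFacet = \negativeFacet[\Q_\eraseLast, \rho] \cup m \in \mathcal{B}$, by Lemma~\ref{lem:greedy2}. The argument has three parts. (a) For $I \in \mathcal{B} \ssm \negativeFacet$: since $m \in I \cap \negativeFacet$, we have $\max(I \ssm \negativeFacet) = \max\big((I \ssm m) \ssm (\negativeFacet \ssm m)\big) < m$, and the flip flipping this position out is an internal flip not involving $m$, so it restricts to the corresponding flip in $\subwordComplex[\Q_\eraseLast, \rho]$; this shows the father map on $\mathcal{B}$ is exactly that of $\negativeSinkTree[\Q_\eraseLast, \rho]$, i.e. the restriction of $\negativeSinkTree$ to $\mathcal{B}$ is $\negativeSinkTree[\Q_\eraseLast, \rho] \join m$. (b) For $I \in \mathcal{A}$ with $I \ne \negativeFacet[\Q_\eraseLast, \rho q_m]$: the father of $I$ in $\negativeSinkTree$ obtained by flipping $\max(I \ssm \negativeFacet)$ — here one must check this father also lies in $\mathcal{A}$, and that the flip matches the one computed inside $\subwordComplex[\Q_\eraseLast, \rho q_m]$; this should follow because $m \notin I$ and $m \notin \negativeFacet[\Q_\eraseLast, \rho q_m]$, and flips among facets of $\mathcal{A}$ coincide with flips in $\subwordComplex[\Q_\eraseLast, \rho q_m]$. (c) The bridge: the father of $\negativeFacet[\Q_\eraseLast, \rho q_m] \in \mathcal{A}$ (which is the unique sink of the sub-flip-graph on $\mathcal{A}$, hence has no father inside $\mathcal{A}$) is $\negativeFacet = \negativeFacet[\Q_\eraseLast, \rho] \cup m$. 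This is exactly Proposition~\ref{prop:gfp} read in reverse: since $m$ is flippable in $\negativeFacet$ (it is flippable precisely because we are in case~\eqref{prop:inductiveNegativeTree:iii}, where $m$ appears in some but not all facets), flipping $m$ in $\negativeFacet$ yields $\negativeFacet[\Q_\eraseLast, \rho q_m]$; and since $m$ is the largest position and $m \in \negativeFacet$, $m \notin \negativeFacet[\Q_\eraseLast, \rho q_m]$, this flip is increasing with $\negativeEdgeLabel$-label $m = \max\big(\negativeFacet[\Q_\eraseLast, \rho q_m] \ssm \negativeFacet\big)$, so by Proposition~\ref{prop:unicity} it is exactly the father edge in $\negativeSinkTree$. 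Assembling (a), (b), (c) gives the claimed disjoint union with the single extra edge.

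**Main obstacle.** The delicate point is part (b), the interplay between flips inside $\subwordComplex$ among facets of $\mathcal{A}$ and flips inside $\subwordComplex[\Q_\eraseLast, \rho q_m]$: one must be sure that when we flip the largest position of $I \ssm \negativeFacet$ for $I \in \mathcal{A}$, the resulting facet is again in $\mathcal{A}$ (does not suddenly acquire position $m$) and that the flipped-in position matches what is computed inside the smaller complex. The cleanest way to secure this is to note that $\max(I \ssm \negativeFacet) \le \max(I \ssm \negativeFacet[\Q_\eraseLast,\rho q_m])$ is irrelevant; rather, one observes that any $\negativeEdgeLabel$-rising path from $I \in \mathcal A$ first stays inside $\mathcal A$ until it reaches the sink $\negativeFacet[\Q_\eraseLast,\rho q_m]$ of $\flipGraph$ restricted to $\mathcal A$ (because leaving $\mathcal A$ means flipping $m$ in, which can only happen as the final step into $\negativeFacet$), and on $\mathcal A$ the flip graph with its labels is literally $\flipGraph[\Q_\eraseLast, \rho q_m]$. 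Once this "$\mathcal A$-is-an-order-ideal-away-from-the-top" fact is established, everything else is bookkeeping. Alternatively, one could sidestep the casework entirely and prove the proposition directly from the father-description of Proposition~\ref{prop:unicity} combined with Lemma~\ref{lem:greedy2} and Proposition~\ref{prop:gfp}, but the inductive packaging above matches the statement's phrasing most transparently.
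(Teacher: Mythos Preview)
Your overall strategy is the right one and matches the paper's, but the local ``father formula'' you build parts (a)--(c) on is incorrect. You claim that the father of $I \ne \negativeFacet$ in $\negativeSinkTree$ is obtained by the flip whose $\negativeEdgeLabel$-label equals $\max(I \ssm \negativeFacet)$. This cannot be right even syntactically: an $\negativeEdgeLabel$-label is the position flipped \emph{in}, hence lies in the target facet and \emph{not} in $I$, whereas $\max(I \ssm \negativeFacet)$ is a position of $I$. More substantively, the proposition you cite (``stated just before Proposition~\ref{prop:gfp}'') describes fathers in $\positiveSinkTree$ and $\negativeSourceTree$, not in $\negativeSinkTree$; there is no comparably simple local formula for $\negativeSinkTree$---that is precisely why the paper needs the more elaborate greedy-prefix description of Proposition~\ref{prop:characterizationFatherNegative}. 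Consequently, the arguments in your parts (a) and (b), which lean on this formula (``the flip flipping this position out \dots restricts to \dots''), do not go through as written, and the displayed equation in (c), $m = \max\big(\negativeFacet[\Q_\eraseLast, \rho q_m] \ssm \negativeFacet\big)$, is false ($m$ lies in $\negativeFacet$, not in $\negativeFacet[\Q_\eraseLast, \rho q_m]$).

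That said, your ``Main obstacle'' paragraph contains the correct argument, and it is exactly what the paper does: take any $\negativeEdgeLabel$-rising path from $I$ to $\negativeFacet$; the label $m$, being maximal, can only appear as the last label. If it appears, the penultimate facet is $\negativeFacet$ with $m$ flipped out, which by Proposition~\ref{prop:gfp} equals $\negativeFacet[\Q_\eraseLast, \rho q_m]$, and the initial segment of the path is then an $\negativeEdgeLabel$-rising path in $\flipGraph[\Q_\eraseLast, \rho q_m]$. If it does not appear, then $m$ lies in every facet along the path, and deleting $m$ from each facet yields an $\negativeEdgeLabel$-rising path in $\flipGraph[\Q_\eraseLast, \rho]$. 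So drop the father-formula framing entirely and run this path-tracking argument from the start; that is the paper's proof verbatim.
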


\begin{proof}
Assume that~$m$ is contained in at least one and not all facets of~$\subwordComplex$.
In other words, $m$ is a flippable position of~$\negativeFacet$.
Let~$I = I_1 \edge \cdots \edge I_{\ell+1} = \negativeFacet$ be any $\negativeEdgeLabel$-rising path from an arbitrary facet~$I \in \facets$ to~$\negativeFacet$.
If the label~$m$ appears in this path, then it should clearly appear last.
By Proposition~\ref{prop:gfp}, we have therefore ${I_\ell = \negativeFacet[\Q_\eraseLast, \rho q_m]}$, and $I = I_1 \edge \cdots \edge I_\ell = \negativeFacet[\Q_\eraseLast, \rho q_m]$ is also an $\negativeEdgeLabel$-rising path from~$I$ to~$\negativeFacet[\Q_\eraseLast, \rho q_m]$ in the increasing flip graph~$\flipGraph[\Q_\eraseLast, \rho q_m]$.
Other\-wise, if the label~$m$ does not appear in the path, then~$m$ is contained in all facets of this path, and~$(I \ssm m) = (I_1 \ssm m) \edge \cdots \edge (I_{\ell+1} \ssm m) = \negativeFacet[\Q_\eraseLast, \rho]$ is an $\negativeEdgeLabel$-rising path from~$I \ssm m$ to~$\negativeFacet[\Q_\eraseLast, \rho]$ in the increasing flip graph~$\flipGraph[\Q_\eraseLast, \rho]$.
This corresponds precisely to the description of~\eqref{prop:inductiveNegativeTree:iii}.
The proofs of~\eqref{prop:inductiveNegativeTree:i} and~\eqref{prop:inductiveNegativeTree:ii} are similar and left to the reader.
\end{proof}

We now give the inductive description of the positive source tree~$\positiveSourceTree$, which is based on the left induction formula.
For the empty word~$\varepsilon$, the tree~$\positiveSourceTree[\varepsilon,e]$ is formed by the unique facet~$\varnothing$ of~$\subwordComplex[\varepsilon,e]$, and the tree~$\positiveSourceTree[\varepsilon,\rho]$ is empty if~$\rho \ne e$.
Otherwise, $\positiveSourceTree$ is obtained as follows.

\begin{proposition}
\label{prop:inductivePositiveTree}
For a non-empty word~$\Q$, the tree~$\positiveSourceTree$ equals
\begin{enumerate}[(i)]
\item $\shiftRight{\positiveSourceTree[\Q_\eraseFirst, q_1 \rho]}$ if~$1$ appears in none of the facets of~$\subwordComplex$;
\item $1 \join \shiftRight{\positiveSourceTree[\Q_\eraseFirst, \rho]}$ if~$1$ appears in all the facets of~$\subwordComplex$;
\item the disjoint union of~$\shiftRight{\positiveSourceTree[\Q_\eraseFirst, q_1 \rho]}$ and~$1 \join \shiftRight{\positiveSourceTree[\Q_\eraseFirst, \rho]}$, with an additional edge from~$\positiveFacet = 1 \cup \shiftRight{\positiveFacet[\Q_\eraseFirst, \rho]}$ to~$\shiftRight{\positiveFacet[\Q_\eraseFirst, q_1\rho]}$, otherwise.
\end{enumerate}
\end{proposition}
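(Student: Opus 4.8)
The plan is to analyze the $\positiveEdgeLabel$-rising paths issued from the positive greedy facet $\positiveFacet$, mirroring the proof of Proposition~\ref{prop:inductiveNegativeTree} but using the \emph{left} inductive decomposition of $\facets$ from Section~\ref{subsec:induction}. A quick alternative is to transport Proposition~\ref{prop:inductiveNegativeTree} through the reversal operation of Remark~\ref{rem:reversal}: reversal exchanges $\positiveSourceTree$ with $\negativeSinkTree$ and the roles of the first and last letter, so the three cases of Proposition~\ref{prop:inductiveNegativeTree} map precisely to the three cases here (using $I \edge J \in \positiveSourceTree[\q_m \cdots \q_1, \rho^{-1}] \iff J' \edge I' \in \negativeSinkTree[\q_1 \cdots \q_m, \rho]$ recorded after Example~\ref{exm:totoTrees}, with $X' \eqdef \set{m+1-x}{x \in X}$). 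I would present the direct argument for self-containedness. First I would dispatch cases~(i) and~(ii). If $1$ lies in no facet of $\subwordComplex$, then $\facets = \shiftRight{\facets[\Q_\eraseFirst, q_1\rho]}$ and $I \mapsto \shiftLeft{I}$ is an isomorphism of increasing flip graphs carrying $\positiveEdgeLabel$-labels to $\positiveEdgeLabel$-labels up to a global decrement; it therefore sends $\positiveFacet$ to $\positiveFacet[\Q_\eraseFirst, q_1\rho]$ (Lemma~\ref{lem:greedy2}) and all $\positiveEdgeLabel$-rising paths out of the former to all $\positiveEdgeLabel$-rising paths out of the latter, giving $\positiveSourceTree = \shiftRight{\positiveSourceTree[\Q_\eraseFirst, q_1\rho]}$. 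If $1$ lies in every facet of $\subwordComplex$, then $1$ is unflippable in every facet (flipping it would produce a facet avoiding $1$), $\facets = 1 \join \shiftRight{\facets[\Q_\eraseFirst, \rho]}$, and $I \mapsto \shiftLeft{(I \ssm 1)}$ is again such an isomorphism, giving $\positiveSourceTree = 1 \join \shiftRight{\positiveSourceTree[\Q_\eraseFirst, \rho]}$.

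The substance is case~(iii), where $\rho \prec \Q_\eraseFirst$ and $\ell(q_1\rho) < \ell(\rho)$. By Lemma~\ref{lem:greedy2} the positive greedy facet is $\positiveFacet = 1 \cup \shiftRight{\positiveFacet[\Q_\eraseFirst,\rho]}$, and since $\Root{\positiveFacet}{1} = \alpha_{q_1} \in \inv(\rho)$, position $1$ is flippable in $\positiveFacet$ by Proposition~\ref{prop:roots&flips}. I would then record two facts about any $\positiveEdgeLabel$-rising path $\positiveFacet = I_1 \edge \cdots \edge I_{\ell+1} = I$: position $1$ is never flipped \emph{in} (an increasing flip flipping in $1$ would have to flip out some $i < 1$), and, by Proposition~\ref{prop:unicity}, position $1$ is flipped \emph{out} along the path if and only if $1 \notin I$, in which case it must be flipped out at the very first step (as $\positiveEdgeLabel(I_1 \sep I_2) = \min(I_1 \ssm I_{\ell+1}) = 1$). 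Hence such a path either stays inside the set of facets containing $1$ and, after deleting $1$ and shifting left, becomes a $\positiveEdgeLabel$-rising path in $\flipGraph[\Q_\eraseFirst, \rho]$ out of $\positiveFacet[\Q_\eraseFirst, \rho]$; or it begins with the label-$1$ flip at $\positiveFacet$ --- whose target is $\shiftRight{\positiveFacet[\Q_\eraseFirst, q_1\rho]}$ by Proposition~\ref{prop:gfp} --- and thereafter stays inside the set of facets avoiding $1$, becoming, after shifting left, a $\positiveEdgeLabel$-rising path in $\flipGraph[\Q_\eraseFirst, q_1\rho]$ out of $\positiveFacet[\Q_\eraseFirst, q_1\rho]$. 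These correspondences are bijective and preserve the tree structure, so $\positiveSourceTree$ is exactly the disjoint union of $1 \join \shiftRight{\positiveSourceTree[\Q_\eraseFirst, \rho]}$ and $\shiftRight{\positiveSourceTree[\Q_\eraseFirst, q_1\rho]}$, together with the single edge $\positiveFacet \edge \shiftRight{\positiveFacet[\Q_\eraseFirst, q_1\rho]}$, as claimed.

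The only genuinely non-routine point --- the main obstacle --- is the assertion in case~(iii) that along $\positiveEdgeLabel$-rising paths position $1$ is flipped out at most once, and, when it is, only at $\positiveFacet$ itself: this is what forces the connecting edge to be unique and pins down its target through Proposition~\ref{prop:gfp}. Everything else is bookkeeping with the shift operator and with the isomorphism between the $1$-containing (resp.~$1$-avoiding) part of $\flipGraph$ and $\flipGraph[\Q_\eraseFirst, \rho]$ (resp.~$\flipGraph[\Q_\eraseFirst, q_1\rho]$), which both follow from the left inductive decomposition of $\facets$ and the uniqueness of flips (Theorem~\ref{theo:KnutsonMiller}).
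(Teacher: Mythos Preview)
Your proposal is correct and follows precisely the two routes the paper itself indicates: you first note the reversal argument from Remark~\ref{rem:reversal} and then carry out in detail the direct translation of the proof of Proposition~\ref{prop:inductiveNegativeTree} via the left inductive decomposition, Lemma~\ref{lem:greedy2}, Proposition~\ref{prop:unicity}, and Proposition~\ref{prop:gfp}. In fact your write-up is considerably more explicit than the paper's one-line proof, which merely points to these same two alternatives without elaboration.
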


\begin{proof}
We can either translate the proof of Proposition~\ref{prop:inductiveNegativeTree}, or directly apply to Proposition~\ref{prop:inductiveNegativeTree} the reversal operation of Remark~\ref{rem:reversal}.
\end{proof}

Note that we do not have a similar inductive description for the positive sink and negative source trees~$\positiveSinkTree$ and~$\negativeSourceTree$.
Let~$I_{\max}$ denote the neighbor of~$\negativeFacet$ in~$\flipGraph$ which maximizes~$\positiveEdgeLabel_{\max} \eqdef \positiveEdgeLabel(I_{\max} \sep \negativeFacet)$.
We can use position~$\positiveEdgeLabel_{\max}$ to decompose the positive sink tree~$\positiveSinkTree$ as the union of a spanning tree of the graph of increasing flips on its link~$\set{I \in \subwordComplex}{\positiveEdgeLabel_{\max} \in I}$ with a spanning tree of the graph of increasing flips on its deletion~$\set{I \in \subwordComplex}{\positiveEdgeLabel_{\max} \notin I}$, together with the edge~$I_{\max} \sep \negativeFacet$.
However, contrarily to the link of~$\positiveEdgeLabel_{\max}$, the deletion of~$\positiveEdgeLabel_{\max}$ is not a subword complex in general.
This is a serious limit to an inductive decomposition of the positive sink tree~$\positiveSinkTree$.
The same observation holds for the negative source tree~$\negativeSourceTree$.

\medskip
We now give a direct characterization of the father of a facet~$I$ of~$\subwordComplex$ in the negative sink tree~$\negativeSourceTree$.
This description can be understood in terms of the longest greedy prefix of~$I$.

\begin{proposition}
\label{prop:characterizationFatherNegative}
Let~$I \ne \negativeFacet$ be a facet of~$\subwordComplex$.
Define $y = y(I)$ to be the smallest position in~$[m]$ such that
$$I \cap [y] \ne \negativeFacet[\q_1 \cdots \q_y, \wordprod{\Q}{[y] \ssm I}],$$
and $x = x(I)$ to be the smallest position in~$I$ such that~$\Root{I}{x} = \Root{I}{y}$.
Then the father of the facet~$I$ in the negative sink tree~$\negativeSinkTree$ is obtained from~$I$ by flipping~$x$.
\end{proposition}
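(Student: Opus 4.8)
The plan is to identify $x(I)$ with the position flipped out by the first flip of the unique $\negativeEdgeLabel$-rising path from $I$ to $\negativeFacet$; indeed, by the definition of the $\negativeEdgeLabel$-sink tree (Remark~\ref{rem:spanningTrees}) together with Theorem~\ref{thm:ELlabeling}, the father of $I$ in $\negativeSinkTree$ is precisely the second vertex of that path. First I would analyze this first flip with Proposition~\ref{prop:unicity}: along any $\negativeEdgeLabel$-rising path from a facet $K$ to $\negativeFacet$ the negative labels are exactly the elements of $\negativeFacet \ssm K$ in increasing order, so the first flip out of $I$ flips in $\min(\negativeFacet \ssm I)$, and by Proposition~\ref{prop:roots&flips}\eqref{prop:roots&flips:flip}--\eqref{prop:roots&flips:increasing} it flips out a position $x_I \in I$ with $\Root{I}{x_I} = \Root{I}{\min(\negativeFacet\ssm I)} \in \Phi^+$ and $x_I < \min(\negativeFacet\ssm I)$. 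Moreover $x_I \notin \negativeFacet$: if $x_I$ lay in $\negativeFacet$, the facet reached by the first flip would satisfy $\min(\negativeFacet \ssm (\cdot\,)) = x_I < \min(\negativeFacet\ssm I)$, contradicting that the rest of the path is $\negativeEdgeLabel$-rising. Conversely, uniqueness of the $\negativeEdgeLabel$-rising path forces $x_I$ to be the only position of $I\ssm\negativeFacet$ carrying the root $\Root{I}{\min(\negativeFacet\ssm I)}$.

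The heart of the argument is then the identity $y(I) = \min(\negativeFacet\ssm I)$, which I would prove through the principle that prefixes of greedy facets are again greedy. First, an induction on the length of $\Q$ using the left inductive formula of Lemma~\ref{lem:greedy1} (and the fact that any prefix of a reduced expression of $\rho$ lies weakly below $\rho$ in weak order, which governs the relevant length comparisons) shows that $\negativeFacet\cap[z] = \negativeFacet[\q_1\cdots\q_z, \wordprod{\Q}{[z]\ssm\negativeFacet}]$ for all $z\in[m]$. Second, an induction on $z$ using the right inductive formula of Lemma~\ref{lem:greedy2} shows that $I\cap[z] = \negativeFacet[\q_1\cdots\q_z, \wordprod{\Q}{[z]\ssm I}]$ for every $z < \min(\negativeFacet\ssm I)$, while equality fails at $z = \min(\negativeFacet\ssm I)$. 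The inductive step is immediate when $z\in I$, since $[z]\ssm I$ already spells a reduced expression inside $\q_1\cdots\q_z$; when $z\notin I$ — in which case $z\notin\negativeFacet$ if $z<\min(\negativeFacet\ssm I)$ and $z\in\negativeFacet$ if $z = \min(\negativeFacet\ssm I)$ — one has to decide whether appending $\q_z$ forces a new inversion into the greedy prefix, and this is controlled by the first induction applied to $\negativeFacet$ at position $z$. Consequently $y(I) = \min(\negativeFacet\ssm I)$, and $x(I)$ is by definition the smallest position $x$ of $I$ with $\Root{I}{x} = \Root{I}{\min(\negativeFacet\ssm I)}$.

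It remains to show $x(I) = x_I$. We already know $\Root{I}{x_I} = \Root{I}{\min(\negativeFacet\ssm I)}$, so only minimality is at stake: one must rule out a position $x' < x_I$ of $I$ with $\Root{I}{x'} = \Root{I}{\min(\negativeFacet\ssm I)}$. By Proposition~\ref{prop:roots&flips}\eqref{prop:roots&flips:flip} such an $x'$ also flips into $\min(\negativeFacet\ssm I)$, and by the uniqueness argument above it would have to lie in $\negativeFacet$ rather than in $I\ssm\negativeFacet$; using the greedy-prefix identity $I\cap[y(I)-1] = \negativeFacet[\q_1\cdots\q_{y(I)-1}, \wordprod{\Q}{[y(I)-1]\ssm I}]$, such an $x'$ would be an unflippable position of this prefix subword complex pointing in the direction $\Root{I}{y(I)}$ yet lying strictly to the left of $x_I$, which I would show is incompatible with the leftmost (greedy) placement of inversions. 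A likely cleaner route is an induction on $m$ via Proposition~\ref{prop:inductiveNegativeTree}: in cases~(i) and~(ii), and for facets containing $m$ in case~(iii), both $y(I)$ and $x(I)$ are unchanged upon passing to $\Q_\eraseLast$, so the claim follows from the inductive hypothesis and the description of $\negativeSinkTree$; the only genuinely new situation is $I = \negativeFacet[\Q_\eraseLast, \rho q_m]$ in case~(iii), where Proposition~\ref{prop:gfp} identifies the father of $I$ as $\negativeFacet$, reached by flipping the position $x_I$ with $I\ssm x_I = \negativeFacet[\Q_\eraseLast, \rho]$, and where $y(I) = m$ by the first induction.

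The step I expect to be the main obstacle is exactly this minimality of $x_I$ — excluding ``double roots'' in the direction $\Root{I}{y(I)}$ that would lie in $\negativeFacet$ to the left of $x_I$; this is the one point where one genuinely needs the greedy structure of the negative sink tree, not just the flip combinatorics of Propositions~\ref{prop:unicity} and~\ref{prop:roots&flips}. The secondary delicate point is the case $z\notin I$ in the greedy-prefix induction, where one must track precisely when the letter $\q_z$ creates a new inversion.
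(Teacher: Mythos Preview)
Your central claim --- that Proposition~\ref{prop:unicity} forces the negative labels along the $\negativeEdgeLabel$-rising path from $I$ to $\negativeFacet$ to be exactly $\negativeFacet\ssm I$ in increasing order, and in particular $\bar y(I) = \min(\negativeFacet\ssm I)$ --- is false, and this is where the argument collapses. Proposition~\ref{prop:unicity} only gives $\negativeEdgeLabel_k = \max(I_{k+1}\ssm I_1)$, a \emph{backward}-looking formula; the \emph{forward}-looking description $\positiveEdgeLabel_k = \min(I_k\ssm I_{\ell+1})$ is for the \emph{positive} labeling, and it is exactly this asymmetry that distinguishes~$\negativeSinkTree$ from the much simpler tree~$\positiveSinkTree$ treated just before Proposition~\ref{prop:gfp}. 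A concrete counterexample is the Cambrian subword complex~$\subwordComplex[\sq{c}]$ for $W=\fS_4$ and $\sq{c}=\sq{\tau}_1\sq{\tau}_2\sq{\tau}_3$ (see \fref{fig:greedyTreeCoxeter}): here $\positiveFacet=\{1,2,3\}$ and $\negativeFacet=\{6,8,9\}$, so $\min(\negativeFacet\ssm\positiveFacet)=6$, yet the three increasing flips out of $\{1,2,3\}$ flip in positions $4$, $7$, and $9$ --- none of them equal to~$6$. Computing directly from the definition one finds $y(\{1,2,3\})=4$, and the father of $\{1,2,3\}$ in~$\negativeSinkTree$ is $\{2,3,4\}$, reached by flipping in position~$4\notin\negativeFacet$. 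So not only does the first negative label fail to be $\min(\negativeFacet\ssm I)$, it need not lie in~$\negativeFacet$ at all; the $\negativeEdgeLabel$-rising path may be strictly longer than~$|\negativeFacet\ssm I|$.

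The paper's proof sidesteps this entirely: it never relates $\bar y(I)$ to $\negativeFacet\ssm I$. Instead it runs an induction along the tree~$\negativeSinkTree$ itself, comparing $I$ only to its father~$J$. The key is Proposition~\ref{prop:gfp}: knowing inductively that $J\cap[\bar y(I)]$ is the negative greedy facet of its prefix complex, one obtains $I\cap[\bar y(I)-1]$ from it by flipping out position~$\bar y(I)$, so by Proposition~\ref{prop:gfp} this prefix is again negative greedy, giving $y(I)\ge\bar y(I)$; the reverse inequality is immediate since $\bar y(I)\in J\cap[\bar y(I)]$ but $\bar y(I)\notin I$. The minimality of $\bar x(I)$ then falls out in one line: a smaller $x'\in I$ with the same root would survive in~$J$ (since $x'<\bar x(I)$) with a positive root, contradicting that $J\cap[\bar y(I)]$ is negative greedy. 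Your ``likely cleaner route'' via Proposition~\ref{prop:inductiveNegativeTree} is in spirit the same induction, but you would have to abandon the identity $y(I)=\min(\negativeFacet\ssm I)$ and instead track the greedy-prefix property through the recursion --- at which point you are essentially reproducing the paper's argument.
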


\begin{proof}
Let~$x(I)$ and~$y(I)$ be the positions defined in the statement of the proposition.
Denote by~$J$ the father of~$I$ in the negative sink tree~$\negativeSinkTree$, and let~$\bar x(I)$ and~$\bar y(I)$ be such that $I \ssm \bar x(I) = J \ssm \bar y(I)$.
We want to prove that~$x(I) = \bar x(I)$ and $y(I) = \bar y(I)$ for any facet~$I \ne \negativeFacet$ of~$\subwordComplex$.

We first prove that~$y(I) = \bar y(I)$ for any facet~$I$ of~$\subwordComplex$ by induction on the negative sink tree.
For this, set~$y(\negativeFacet) = \bar y(\negativeFacet) = m+1$.
Consider an arbitrary facet~$I \ne \negativeFacet$ and its father~$J$ in~$\negativeSinkTree$.
In particular, we have $I \ssm \bar x(I) = J \ssm \bar y(I)$ with~$\bar x(I) < \bar y(I) < \bar y(J)$.
The first inequality holds since the flip~$I \edge J$ is increasing, and the second holds since the unique path from~$I$ to~$\negativeFacet$ in~$\negativeSinkTree$ is $\negativeEdgeLabel$-rising.
We want to prove that~${y(I) = \bar y(I)}$, assuming by induction that~$y(J) = \bar y(J)$.
First, since~$\bar y(I) < \bar y(J) = y(J)$ and~${\wordprod{\Q}{[\bar y(I)] \ssm J} = \wordprod{\Q}{[\bar y(I)] \ssm I}}$, we observe that
$${\bar y(I) \in J \cap [\bar y(I)] = \negativeFacet[\q_1 \cdots \q_{\bar y(I)},\wordprod{\Q}{[\bar y(I)] \ssm J}] = \negativeFacet[\q_1 \cdots \q_{\bar y(I)},\wordprod{\Q}{[\bar y(I)] \ssm I}]}.$$
Since $\bar y(I) \notin I \cap [\bar y(I)]$, this implies that~$y(I) \le \bar y(I)$.
Second, the negative greedy flip property of Proposition~\ref{prop:gfp} ensures that
$$I \cap [\bar y(I)-1] = \negativeFacet[\q_1 \cdots \q_{\bar y(I)-1}, \wordprod{\Q}{[\bar y(I)-1] \ssm I}]$$
since it is obtained from~$J \cap [\bar y(I)] = \negativeFacet[\q_1 \cdots \q_{\bar y(I)}, \wordprod{\Q}{[\bar y(I)] \ssm J}]$ by flipping~$\bar y(I)$.
Thus, we obtain that~$y(I) > \bar y(I)-1$.
This concludes the proof that~$y(I) = \bar y(I)$.

Finally, since~$I \ssm \bar x(I) = J \ssm \bar y(I) = J \ssm y(I)$, we know that~$\Root{I}{\bar x(I)} = \Root{I}{y(J)}$ by Proposition~\ref{prop:roots&flips}\eqref{prop:roots&flips:flip}. Moreover, it has to be the smallest position in~$I$ with this property since otherwise~$y(J)$ would be smaller than~$y(I)$.
\end{proof}

Finally, we give a similar direct characterization of the father of a facet~$I$ of~$\subwordComplex$ in the positive source tree~$\positiveSourceTree$.
This description can be understood in terms of the longest greedy suffix of~$I$.

\begin{proposition}
\label{prop:characterizationFatherPositive}
Let~$I \ne \positiveFacet$ be a facet of~$\subwordComplex$.
Define $y = y(I)$ to be the largest position in~$[m]$ such that
$${\set{i-y}{i \in I \ssm [y]} \ne \positiveFacet[\q_{y+1} \cdots \q_m, \wordprod{\Q}{[y+1,m] \ssm I}]},$$
and $x = x(I)$ to be the largest position in~$I$ such that~$\Root{I}{x} = -\Root{I}{y}$.
Then the father of the facet~$I$ in the positive sink tree~$\positiveSinkTree$ is obtained from~$I$ by flipping~$x$.
\end{proposition}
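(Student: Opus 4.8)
The plan is to deduce Proposition~\ref{prop:characterizationFatherPositive} directly from Proposition~\ref{prop:characterizationFatherNegative} via the reversal operation of Remark~\ref{rem:reversal}, rather than to reprove it from scratch. Recall that reversal sends the word $\Q = \q_1 \cdots \q_m$ and the element $\rho$ to the word $\bar\Q \eqdef \q_m \cdots \q_1$ and the element $\rho^{-1}$, sends a facet $I$ of $\subwordComplex$ to the facet $\bar I \eqdef \set{m+1-i}{i \in I}$ of $\subwordComplex[\bar\Q,\rho^{-1}]$, exchanges $\positiveEdgeLabel$ with $\negativeEdgeLabel$ (up to the index flip $k \mapsto m+1-k$), exchanges the positive and negative greedy facets, and exchanges the positive source tree $\positiveSourceTree$ with the negative sink tree $\negativeSinkTree[\bar\Q,\rho^{-1}]$, as recorded in Section~\ref{subsec:spanningTrees}. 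Hence the father of $I$ in $\positiveSourceTree$ corresponds, under reversal, to the father of $\bar I$ in $\negativeSinkTree[\bar\Q,\rho^{-1}]$.

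First I would apply Proposition~\ref{prop:characterizationFatherNegative} to the facet $\bar I$ of the reversed subword complex $\subwordComplex[\bar\Q,\rho^{-1}]$. This produces a position $\bar y$, namely the smallest position in $[m]$ at which the prefix $\bar I \cap [\bar y]$ fails to be the relevant negative greedy facet, and a position $\bar x$, the smallest position in $\bar I$ with $\Root{\bar I}{\bar x} = \Root{\bar I}{\bar y}$ in the reversed complex; the father of $\bar I$ in $\negativeSinkTree[\bar\Q,\rho^{-1}]$ is then obtained by flipping $\bar x$. The second step is to translate each of these ingredients back through reversal. A prefix of $\bar\Q$ of length $\bar y$ corresponds to a suffix of $\Q$ of length $y \eqdef m - \bar y$; the negative greedy facet of a prefix becomes the positive greedy facet of the corresponding suffix (using that $\negativeFacet[\q_m \cdots \q_1,\rho^{-1}]$ is the reverse of $\positiveFacet[\q_1 \cdots \q_m,\rho]$, stated just before Example~\ref{exm:toto} in Section~\ref{subsec:greedyFacets}); and the complement $[\bar y] \ssm \bar I$ in the reversed word matches $[y+1,m] \ssm I$ in the original word, so that $\wordprod{\bar\Q}{[\bar y] \ssm \bar I}$ corresponds to $\wordprod{\Q}{[y+1,m] \ssm I}$. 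Putting these together, the condition defining $\bar y$ in the reversed complex becomes exactly the condition $\set{i-y}{i \in I \ssm [y]} \ne \positiveFacet[\q_{y+1} \cdots \q_m, \wordprod{\Q}{[y+1,m] \ssm I}]$ defining $y$ in the statement, and ``smallest'' for $\bar y$ becomes ``largest'' for $y = m - \bar y$.

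It remains to handle the root and the index $x$. Under reversal the root function transforms by $\Root{\bar I}{m+1-k} = -\Root{I}{k}$ (this is the translation at the level of root functions underlying the sign conventions in Proposition~\ref{prop:roots&flips}; concretely, reversing a reduced word negates all inversion roots since $\inv(\rho^{-1}) = -\rho\inv(\rho)$ compatibly), so the equation $\Root{\bar I}{\bar x} = \Root{\bar I}{\bar y}$ becomes $\Root{I}{x} = \Root{I}{y}$ after the substitutions $x = m+1-\bar x$ and, with a harmless reindexing, the root of $y$ acquires a sign: the condition reads $\Root{I}{x} = -\Root{I}{y}$ as in the proposition. Finally ``smallest position in $\bar I$'' maps to ``largest position in $I$'' under $x \mapsto m+1-x$, and flipping $\bar x$ in $\bar I$ corresponds to flipping $x$ in $I$, giving the father of $I$ in $\positiveSinkTree$ (note the statement says positive \emph{sink} tree $\positiveSinkTree$, matching the $\negativeEdgeLabel$-to-$\positiveEdgeLabel$ direction of the reversal on trees).

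The main obstacle is purely bookkeeping: one has to be scrupulous about how the two index reversals ($k \mapsto m+1-k$ on positions, $y \mapsto m-y$ on the cut point) interact, and about the sign flip the root function picks up under reversal, so that the asymmetry between Proposition~\ref{prop:characterizationFatherNegative} (which has $\Root{I}{x} = \Root{I}{y}$) and the present statement (which has $\Root{I}{x} = -\Root{I}{y}$) comes out correctly rather than being a typo. No genuinely new combinatorial input is needed beyond Remark~\ref{rem:reversal}, the reversal identities for greedy facets and for the trees collected in Section~\ref{subsec:spanningTrees}, and Proposition~\ref{prop:characterizationFatherNegative} itself; so I would simply write ``We translate Proposition~\ref{prop:characterizationFatherNegative} through the reversal operation of Remark~\ref{rem:reversal}'' and then display the dictionary of correspondences above, leaving the verification of each line to the reader.
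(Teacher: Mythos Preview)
Your approach---deduce the statement from Proposition~\ref{prop:characterizationFatherNegative} via the reversal operation of Remark~\ref{rem:reversal}---is exactly the paper's own proof, which is the one-liner ``we can either translate the proof of Proposition~\ref{prop:characterizationFatherNegative}, or directly apply to Proposition~\ref{prop:characterizationFatherNegative} the reversal operation of Remark~\ref{rem:reversal}.''

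One point of confusion to clean up: you correctly record early on that reversal exchanges the positive \emph{source} tree $\positiveSourceTree$ with the negative sink tree $\negativeSinkTree[\bar\Q,\rho^{-1}]$, but in your final paragraph you contradict this and claim the output is the positive \emph{sink} tree, apparently to match the symbol $\positiveSinkTree$ appearing in the displayed statement. That reconciliation is wrong. The reversal dictionary in Section~\ref{subsec:spanningTrees} gives $\negativeSinkTree \leftrightarrow \positiveSourceTree$ (and $\negativeSourceTree \leftrightarrow \positiveSinkTree$), so applying Proposition~\ref{prop:characterizationFatherNegative} through reversal yields the father in the positive \emph{source} tree. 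This is consistent with the sentence introducing the proposition (``a direct characterization of the father of a facet~$I$ \dots\ in the positive source tree~$\positiveSourceTree$'') and with the hypothesis $I \ne \positiveFacet$; the symbol $\positiveSinkTree$ in the statement is a typo in the paper, not something your argument should bend to accommodate. Also, your formula $\Root{\bar I}{m+1-k} = -\Root{I}{k}$ is not literally correct (the two root functions differ by a global action of $\rho$ as well), but since you only use it to transport an \emph{equality} of roots and a sign, the conclusion survives; it would be cleaner simply to say that reversal carries the flip data bijectively and changes the sign convention, as the paper does.
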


\begin{proof}
We can either translate the proof of Proposition~\ref{prop:characterizationFatherNegative}, or directly apply to Proposition~\ref{prop:characterizationFatherNegative} the reversal operation of Remark~\ref{rem:reversal}.
\end{proof}


\subsection{Greedy flip algorithm}
\label{subsec:greedyFlipAlgorithm}

The initial motivation of this paper was to find efficient algorithms for the exhaustive generation of the set~$\facets$ of facets of the subword complex~$\subwordComplex$.
For the evaluation of the time and space complexity of the different enumeration algorithms, we consider as parameters the rank~$n$ of the Coxeter group~$W$ and the size~$m$ of the word~$\Q$.
Neither of these two parameters can be considered to be constant a priori.
For example, if we want to generate all triangulations of a convex $(n+3)$-gon (see Example~\ref{exm:geometricGraphs}), we consider a subword complex with a group~$W$ of rank~$n$ and with a word~$\Q$ of size~${n(n+3)/2}$.

The properties of the subword complex described in Sections~\ref{subsec:induction} and~\ref{subsec:flips&roots} already provide two immediate enumeration algorithms.
First, the inductive structure of~$\facets$ yields an \defn{inductive algorithm} whose running time per facet is polynomial.
More precisely, since all subword complexes which appear in the different cases of the right induction formula of Section~\ref{subsec:induction} are non-empty, and since the tests~$\rho \not\prec \Q_\eraseLast$ and~$\ell(\rho q_m) > \ell(\rho)$ can be performed in~$O(mn)$ time, the running time per facet of this inductive algorithm is in~$O(m^2n)$.

The second option is an \defn{exploration of the flip graph}~$\flipGraph$.
This flip graph is connected by Theorem~\ref{theo:KnutsonMiller}, and it has degree bounded by~$m-\ell(\rho)$.
We can thus generate~$\facets$ exploring the flip graph, and we need~${O(m-\ell(\rho))}$ flips per facet for this exploration.
By Proposition~\ref{prop:roots&flips}, we can perform flips in the subword complex~$\subwordComplex$ in~$O(mn)$ time if we store and update the facets of~$\facets$ together with their root functions (note that this storage requires~$O(mn)$ space).
We thus obtain again a running time of $O(m^2n)$ per facet.
The problem of a naive exploration of the flip graph is that we need to store all facets of~$\facets$ during the algorithm, which may require an exponential working space.
This happens for example if we want to generate the~$\frac{1}{n+2}\binom{2n+2}{n+1}$ triangulations of a convex $(n+3)$-gon (see Example~\ref{exm:geometricGraphs}).

Using the canonical spanning trees constructed in this paper, we can bypass this difficulty: we avoid to store all visited facets while preserving the same running time.
The \defn{greedy flip algorithm} generates all facets of the subword complex~$\subwordComplex$ by a depth first search procedure on one\footnote{As observed by M.~Pocchiola, searching on the positive sink tree or on the negative source tree improves the working space of the algorithm. This issue is relevant for the enumeration of pseudotriangulations and will be discussed in a forthcoming paper of his.} of the four canonical spanning trees described in Section~\ref{subsec:spanningTrees}.
The preorder traversal of the tree also provides an iterator on the facets of~$\subwordComplex$.
Given a facet~$I \in \facets$, we can indeed compute its next element in the preorder traversal of the spanning tree, provided we know its root function (plus the path from~$I$ to the root in the tree if we work with either~$\positiveSourceTree$ or~$\negativeSinkTree$).
These data can be updated at each step of the algorithm, using Proposition~\ref{prop:roots&flips} for the root function.

We now bound the time and space complexity of the greedy flip algorithm.
First, its working space is in~$O(mn)$ since we only need to remember during the algorithm the current facet, together with its root function (plus its path to the root in the tree if we work with either~$\positiveSourceTree$ or~$\negativeSinkTree$).
Concerning running time, each facet needs at most~$m$ flips to generate all its children in the spanning tree.
Since a flip can be performed in~$O(mn)$ time (by Proposition~\ref{prop:roots&flips}), the running time per facet of the greedy flip algorithm is still in~$O(m^2n)$.

\begin{figure}
  \centerline{\includegraphics[width=.49\textwidth]{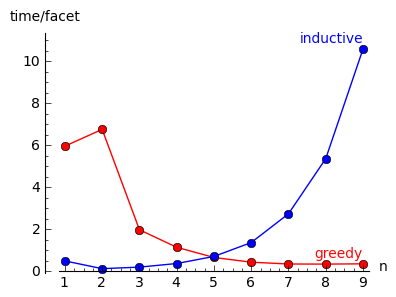} \quad \includegraphics[width=.49\textwidth]{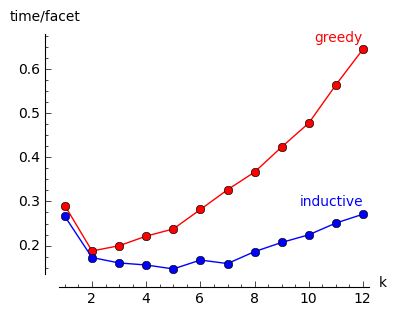}}
  \caption{Comparison of the running times of the inductive algorithm and the greedy flip algorithm to generate the $k$-cluster complex of type~$A_n$.
On the left, $k$ is fixed at~$1$ while~$n$ increases; on the right, $n$ is fixed at~$3$ while~$k$ increases.
The time is presented in millisecond per facet.}
  \label{fig:runnngTimes}
\end{figure}

We have implemented the greedy flip algorithm using the mathematical software Sage~\cite{sage} as part of a project\footnote{The ongoing work on this patch can be found at \url{http://trac.sagemath.org/sage_trac/ticket/11010}.} on implementing subword complexes.
We have seen that these two algorithms for generating facets have the same theoretical complexity, namely $O(m^2n)$ per facet.
To compare their experimental running time, we have constructed the $k$-cluster complex of type~$A_n$ for increasing values of~$k$ and~$n$.
Its facets correspond to the $k$-triangulations of the $(n+2k+1)$-gon (see Example~\ref{exm:geometricGraphs} and~\cite{CeballosLabbeStump} for the definition of multicluster complexes in any finite type).
The rank of the group is~$n$, while the length of the word is~$kn+\binom{n}{2}$.
\fref{fig:runnngTimes} presents the running time per facet for both enumeration algorithms in two situations: on the left, $k$ is fixed at~$1$ while~$n$ increases; on the right, $n$ is fixed at~$3$ while~$k$ increases.
The greedy flip algorithm is better than the inductive algorithm in the first situation, and worse in the second.
We observe a similar behavior for the computation of $k$-cluster complexes of types~$B_n$ and~$D_n$.
In general, the inductive algorithm is experimentally faster when the Coxeter group is fixed, but slower when the size of the Coxeter group increases.

\begin{remark}
Our algorithm is similar to that of~\cite{BronnimannKettnerPocchiolaSnoeying} for pointed triangulations and that of~\cite{PilaudPocchiola} for primitive sorting networks.
More precisely, the algorithms of~\cite{BronnimannKettnerPocchiolaSnoeying} and~\cite{PilaudPocchiola} are both depth first search procedures on the positive source tree of particular subword complexes: subword complexes modeling pointed pseudotriangulations for~\cite{BronnimannKettnerPocchiolaSnoeying} (see Example~\ref{exm:geometricGraphs}), and type~$A$ spherical subword complexes for~\cite{PilaudPocchiola}.
\end{remark}


\section{Further combinatorial properties of the EL-labelings}
\label{sec:furtherCombinatorialProperties}

In this section, we discuss some implications of the EL-labelings of the increasing flip graph presented in Section~\ref{subsec:ELlabelingsflipgraph}.
These results concern combinatorial properties of the \defn{increasing flip poset}~$\flipPoset$, defined as the transitive closure of the increasing flip graph~$\flipGraph$.
The key requirement for the validity of these results is that the increasing flip graph~$\flipGraph$ coincides with the Hasse diagram of the increasing flip poset~$\flipPoset$ (see the discussion in the beginning of Section~\ref{subsec:ELlabelings}).
We first characterize and study the subword complexes which fulfill this property.


\subsection{Double root free subword complexes}
\label{subsec:noDoubleRoot}

We say that the subword complex~$\subwordComplex$ has a \defn{double root} if there is a facet~$I \in \subwordComplex$ and two distinct positions~$i \ne j \in [m]$ both flippable in~$I$ such that~$\Root{I}{i} = \Root{I}{j}$.
Otherwise, we say that the subword complex~$\subwordComplex$ is \defn{double root free}.
In this section, we focus on double root free subword complexes due to the following characterization.

\begin{proposition}
\label{prop:characterizationDoubleRootFree}
The subword complex~$\subwordComplex$ is double root free if and only if its increasing flip graph~$\flipGraph$ coincides with the Hasse diagram of its increasing flip poset~$\flipPoset$.
\end{proposition}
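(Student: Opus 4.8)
The plan is to prove both implications by analyzing when a flip $I \edge J$ fails to be a cover relation in $\flipPoset$, i.e.\ when there exists a facet $K$ with $I \directedPath K \directedPath J$ and $I \neq K \neq J$. Since every edge of $\flipGraph$ is a relation in $\flipPoset$, the graph $\flipGraph$ coincides with the Hasse diagram of $\flipPoset$ precisely when no edge of $\flipGraph$ can be ``factored'' through an intermediate facet. So the statement reduces to: a double root exists in some facet if and only if some increasing flip $I \edge J$ factors as a longer increasing path $I \directedPath K \directedPath J$.

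For the easy direction, I would first assume $\subwordComplex$ has a double root: there is a facet $I$ and two distinct positions $i < j$, both flippable in $I$, with $\Root{I}{i} = \Root{I}{j}$. I want to produce a factored edge. Flipping $j$ in $I$ yields a facet $J$ with $I \ssm j = J \ssm j'$ for the unique $j'$ with $\Root{I}{j'} \in \{\pm\Root{I}{j}\}$ (Proposition~\ref{prop:roots&flips}\eqref{prop:roots&flips:flip}). Here the candidates for $j'$ are $i$ and $j$ themselves, but since $j \in I$ and $i \in I$ while $j'$ must lie in the complement of $I$, one checks that flipping $j$ actually flips it into a position $j' < i$ (using that $\Root{I}{j} \in \Phi^+$ so the flip out of $j$ towards a smaller position is increasing only in the reverse direction — here I must be careful about orientation, and the cleanest route is to pick $i,j$ flippable with $\Root{I}{i} = \Root{I}{j} \in \Phi^+$ and observe that then flipping $i$ in $I$ and flipping $j$ in $I$ lead to a common facet by two different routes, giving a ``square'' $I \edge I' \edge J$ and $I \edge J$). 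The key computation is Proposition~\ref{prop:roots&flips}\eqref{prop:roots&flips:update}: after flipping $i$, the root at position $j$ is unchanged (since $j > i$ forces $j$ outside the updated window, or is handled by the double-root equality), so $j$ remains flippable with the same direction, and composing the two flips reaches the same facet as a single flip; this exhibits an edge of $\flipGraph$ that is not a cover relation.

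For the converse, assume $\subwordComplex$ is double root free; I must show every edge $I \edge J$ of $\flipGraph$ is a cover relation in $\flipPoset$. Suppose not: there is $K$ with $I \directedPath K \directedPath J$, $I \neq K \neq J$, so we have a path of increasing flips from $I$ to $J$ of length $\ell \geq 2$. Write $I \ssm i = J \ssm j$ with $i < j$, so $I \ssm J = \{i\}$ and $J \ssm I = \{j\}$. Applying Proposition~\ref{prop:unicity} to this path: the positions flipped out are $\{\positiveEdgeLabel_1,\dots,\positiveEdgeLabel_\ell\}$ and must all lie in distinct sets, and $\min\{\positiveEdgeLabel_1,\dots,\positiveEdgeLabel_\ell\} = \min(I \ssm J) = i$, while the positions flipped in accumulate to $J \ssm I = \{j\}$, forcing $\max\{\negativeEdgeLabel_1,\dots,\negativeEdgeLabel_\ell\} = j$. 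Since $|I| = |J|$ and $I, J$ differ in exactly one element, but a length-$\ell \geq 2$ path flips out $\ell$ positions and flips in $\ell$ positions, every position flipped out other than $i$ must be flipped back in later, and every position flipped in other than $j$ must be flipped back out later. Tracking these cancellations, and using Proposition~\ref{prop:roots&flips}\eqref{prop:roots&flips:update} to follow the root function along the path, I expect to extract a facet along the path in which two positions carry the same root and are both flippable — contradicting double-root-freeness. \textbf{I expect this extraction step to be the main obstacle:} one needs to argue carefully that the first ``wasted'' flip (a position flipped out that will later be flipped back in, or vice versa) forces, at the facet where the reversal happens, the coexistence of two flippable positions with equal root, rather than the root simply being moved around by the action in Proposition~\ref{prop:roots&flips}\eqref{prop:roots&flips:update}. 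The cleanest formulation is probably: take the shortest factored path, look at its second flip $K \edge \cdots$, and show $\positiveEdgeLabel_2 > \positiveEdgeLabel_1$ together with the constraint $I \ssm J = \{i\}$ forces position $\positiveEdgeLabel_1 = i$ to be flippable in $K$ with root equal (up to sign, then pinned to equal by the increasing condition) to that of another flippable position, yielding the double root directly.
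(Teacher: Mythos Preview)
Your forward direction has the right target but is tangled: you speak of a ``square'', of composing flips to reach a common facet, and you claim the root at $j$ is unchanged after flipping $i$ --- which is false whenever $j$ lies in the updated window $(\,i,k\,]$. The clean argument (and the paper's) is one line: if $i,j \in I$ are both flippable with $\Root{I}{i} = \Root{I}{j}$, then by Proposition~\ref{prop:roots&flips}\eqref{prop:roots&flips:flip} both flip to the \emph{same} position $k \notin I$, so the three facets $I$, $I \symdif \{i,k\}$, $I \symdif \{j,k\}$ are pairwise adjacent in $\flipGraph$, and a Hasse diagram contains no triangle.

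Your reverse direction has a genuine gap at exactly the point you flag, and your proposed route (locate the ``first wasted flip'' and find a double root at the reversal facet) runs into the problem you yourself name: intervening flips move the roots around via Proposition~\ref{prop:roots&flips}\eqref{prop:roots&flips:update}, so there is no reason the two relevant positions carry equal roots \emph{at that facet}. The paper sidesteps this by showing the double root already sits in the \emph{initial} facet~$I$. Here is the missing idea. Given a path $I = I_1 \edge \cdots \edge I_{\ell+1} = J$ with $\ell \ge 2$ and $I \ssm i = J \ssm j$, sort the positive labels decreasingly as $\positiveEdgeLabel_1 > \cdots > \positiveEdgeLabel_\ell$ and let $\negativeEdgeLabel_k$ be the negative label of the flip whose positive label is $\positiveEdgeLabel_k$. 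You already see $\positiveEdgeLabel_\ell = i$ and $\negativeEdgeLabel_1 = j$. Iterating the same ``nothing below $i$ / above $j$ is touched'' argument on the remaining positions forces the full chain
\[
i = \positiveEdgeLabel_\ell < \negativeEdgeLabel_\ell = \positiveEdgeLabel_{\ell-1} < \negativeEdgeLabel_{\ell-1} = \positiveEdgeLabel_{\ell-2} < \cdots < \negativeEdgeLabel_2 = \positiveEdgeLabel_1 < \negativeEdgeLabel_1 = j.
\]
This chain pins down the order of the flips (the path is $\positiveEdgeLabel$-falling), and then Proposition~\ref{prop:roots&flips}\eqref{prop:roots&flips:update} applied step by step gives $\Root{I}{\positiveEdgeLabel_1} = \cdots = \Root{I}{\positiveEdgeLabel_\ell} = \Root{I}{j}$, since each $\positiveEdgeLabel_k$ lies to the left of all earlier updated windows. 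All $\positiveEdgeLabel_k$ lie in $I$ and are flippable there (their common root is $\Root{I}{j} \in \inv(\rho)$), so $\ell \ge 2$ yields a double root in~$I$.
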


\begin{proof}
Assume that~$\subwordComplex$ has a double root. Let~$i \ne j \in [m]$ be both flippable in~$I$, and let ${k \in [m] \ssm I}$ be such that~${\Root{I}{i} = \Root{I}{j} = \pm\Root{I}{k}}$ so that both~$i$ and~$j$ flip to~$k$.
Then the flip graph~$\flipGraph$ contains a triangle formed by the facets~$I$, $I \symdif \{i,k\}$, and~$I \symdif \{j,k\}$ (where $A \symdif B \eqdef (A \cup B) \ssm (A \cap B)$ denotes the \defn{symmetric difference} of two sets $A$ and $B$).
Since a Hasse diagram cannot contain a triangle, the Hasse diagram of the increasing flip poset~$\flipPoset$ is only a strict subgraph of the increasing flip graph~$\flipGraph$.

Assume reciprocally that the Hasse diagram of the increasing flip poset~$\flipPoset$ is a strict subgraph of the increasing flip graph~$\flipGraph$.
Let~$I \edge J$ be an oriented edge in~$\flipGraph$ which is not an edge in the Hasse diagram of~$\flipPoset$.
Let~${i \in I}$ and~${j \in J}$ be such that~$I \ssm i = J \ssm j$ (thus~$i < j$), and consider a path ${I = I_1 \edge \cdots \edge I_{\ell+1} = J}$ of increasing flips which prevents the edge~$I \edge J$ to be in the Hasse diagram of~$\flipPoset$ (in particular, $\ell > 1$).
Let ${\positiveEdgeLabel_1 > \ldots > \positiveEdgeLabel_\ell}$ be the decreasing reordering of the set~$\big\{\positiveEdgeLabel(I_1 \sep I_2),\ldots,\positiveEdgeLabel(I_{\ell} \sep I_{\ell+1})\big\}$ of positive edge labels along this path, and let $\negativeEdgeLabel_1,\ldots,\negativeEdgeLabel_\ell$ be the corresponding negative edge labels.
That is to say, when we flip~$\positiveEdgeLabel_k$ out of a certain facet in this path, we obtain~$\negativeEdgeLabel_k$ in the next facet of the path.
Since $I$ and $J$ differ only in positions $i$ and $j$ with $i<j$, and all flips are increasing, no position smaller than $i$ can be flipped.
Thus, we obtain that $\positiveEdgeLabel_{\ell} = i$, and by a similar argument that $\negativeEdgeLabel_1 = j$.
Applying the same argument to the other positions that are flipped along the path, in increasing or in decreasing order, moreover gives
$$i = \positiveEdgeLabel_{\ell} < \negativeEdgeLabel_\ell = \positiveEdgeLabel_{\ell-1} < \dots < \negativeEdgeLabel_2 = \positiveEdgeLabel_1 < \negativeEdgeLabel_1 = j.$$
Proposition~\ref{prop:roots&flips} thus ensures that all roots $\Root{I}{\positiveEdgeLabel_1},\dots,\Root{I}{\positiveEdgeLabel_\ell}$ coincide and are equal to $\Root{I}{\negativeEdgeLabel_1}$, and that we moreover have $\positiveEdgeLabel_k = \positiveEdgeLabel(I_k \sep I_{k+1})$ and~$\negativeEdgeLabel_k = \negativeEdgeLabel(I_k \sep I_{k+1})$.
Since~$\ell > 1$, this completes the proof.
\end{proof}

The intervals in the increasing flip graph of a double root free subword complex have the following property.
We will see in Remark~\ref{rem:counterExamples} that this property, as well as its corollaries below, does not hold for subword complexes with double roots.

\begin{proposition}
\label{prop:intersection}
Let~$I$ and $J$ be two facets of a double root free subword complex~$\subwordComplex$.
Then the intersection~$I \cap J$ is contained in all facets of the interval~$[I,J]$ in the increasing flip graph~$\flipGraph$.
\end{proposition}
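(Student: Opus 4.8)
The plan is to reduce this to a statement about directed paths: since any facet $K$ in the interval $[I,J]$ lies on a path of increasing flips $I \directedPath K \directedPath J$, it suffices to prove that along every path of increasing flips $I = I_1 \edge \cdots \edge I_{\ell+1} = J$ and for every~$k$ one has $I_1 \cap I_{\ell+1} \subseteq I_k$; equivalently, that a position flipped out along such a path is never flipped back in. Suppose this fails, and pick a path of increasing flips together with a position~$p$ realizing a single excursion of minimal length: a subpath $P_0 \edge P_1 \edge \cdots \edge P_r$ with $p \in P_0 \cap P_r$, $p \notin P_1, \dots, P_{r-1}$, and $r$ as small as possible (minimality forces precisely this shape, and $r \ge 2$). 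The first flip $P_0 \edge P_1$ then flips out~$p$ and flips in some $n > p$, while the last flip $P_{r-1} \edge P_r$ flips out some $o < p$ and flips in~$p$.

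\emph{Base case $r = 2$.} Write $\beta \eqdef \Root{P_0}{p}$; by Proposition~\ref{prop:roots&flips}\eqref{prop:roots&flips:increasing} this is the direction of the first flip, and $\Root{P_0}{p} = \Root{P_0}{n} = \beta \in \Phi^+$. Since neither~$p$ nor~$o$ satisfies $p < k \le n$, Proposition~\ref{prop:roots&flips}\eqref{prop:roots&flips:update} gives $\Root{P_1}{p} = \beta$ and $\Root{P_1}{o} = \Root{P_0}{o}$; and since the last flip $P_1 \edge P_2$ flips out $o < p$ and flips in~$p$, Proposition~\ref{prop:roots&flips}\eqref{prop:roots&flips:flip} and \eqref{prop:roots&flips:increasing} force $\Root{P_1}{o} = \Root{P_1}{p} = \beta$. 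Hence $\Root{P_0}{o} = \beta = \Root{P_0}{p}$ with $o \ne p$, both positions lying in~$P_0$ and both flippable in~$P_0$ (indeed $\Root{P_0}{n} = \beta$ lies in $\inv(\rho)$ by Proposition~\ref{prop:roots&flips}\eqref{prop:roots&flips:inversions}, so~$o$ is flippable by Proposition~\ref{prop:roots&flips}\eqref{prop:roots&flips:flippable}). Thus~$P_0$ carries a double root, contradicting our standing hypothesis.

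\emph{Inductive step $r \ge 3$, and the main difficulty.} I would shorten the excursion by commuting the flip of~$p$ past the next flip $P_1 \edge P_2$, which flips out some $o_1 < i_1$ with $o_1, i_1 \ne p$. If this flip does not interact with the first one in the sense of Proposition~\ref{prop:roots&flips}\eqref{prop:roots&flips:update} --- its two positions avoiding the range $p < k \le n$ --- then $\{o_1,i_1\}$ is already a valid increasing flip in~$P_0$, and performing it first replaces $P_0 \edge P_1 \edge P_2$ by $P_0 \edge (P_0 \symdif \{o_1,i_1\}) \edge P_2$, producing an excursion of~$p$ of length $r-1$ and contradicting minimality. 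In the interacting case I would restrict both flips, via Proposition~\ref{prop:restriction}, to the dihedral reflection subgroup spanned by their directions (one checks this restriction preserves double root freeness), where the two flips form a $\positiveEdgeLabel$-descent and Lemma~\ref{lem:dihedral} rewrites them as a $\positiveEdgeLabel$-rising path in which~$p$ is flipped out strictly later; iterating, one pushes the flip of~$p$ rightwards until it cancels against the re-insertion of~$p$ at the step $P_{r-1}\edge P_r$, the contradiction. The hard part is exactly this interacting case: the rewrite of Lemma~\ref{lem:dihedral} may lengthen the overall path while it shortens the excursion of~$p$, so a carefully chosen potential (for instance the position at which~$p$ is first flipped out, refined by path length) is needed to ensure termination. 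Alternatively --- and this is the approach I would try first --- one restricts the whole path $I \directedPath K \directedPath J$ at once, via Proposition~\ref{prop:restriction}, to the reflection subgroup generated by all flip directions occurring along it, reducing the claim to a low-rank double root free subword complex where it can be verified by hand.
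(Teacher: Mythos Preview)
Your reduction to paths and your base case $r=2$ are both correct and cleanly argued. The inductive step, however, has a genuine gap that you yourself flag but do not close.

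First, Lemma~\ref{lem:dihedral} requires the two flips to form a $\positiveEdgeLabel$-descent, i.e.\ $p > o_1$; nothing in your setup forces this, since $o_1$ may well lie in $(p,n]$, and the restriction of Proposition~\ref{prop:restriction} preserves the relative order of positions, so no descent appears in the dihedral picture either. Second, even when $p > o_1$, the $\positiveEdgeLabel$-rising replacement produced by Lemma~\ref{lem:dihedral} may have length $s > 2$ (the proof of that lemma explicitly allows a nonempty tail after the flips at $o_1$ and~$p$). In that case $p$ is flipped out at the second step of a path of total length $s + r - 2$, so the new excursion of~$p$ has length $s + r - 3 \ge r$: your minimality hypothesis is not violated and the induction stalls. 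Your two proposed escapes---an unspecified potential, or a global restriction to the span of all flip directions---are not worked out, and the latter does not reduce the rank in general, since that span may be all of~$V$.

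The paper's argument avoids commuting flips altogether. It chooses a path of \emph{minimal total length} (rather than minimal excursion) realizing a flip-out-then-back-in of some position, arranged so that this position is $\positiveEdgeLabel_0 = \negativeEdgeLabel_\ell$. The key tool is a preparation lemma (Lemma~\ref{lem:preparation}): if the first flip carries the maximal $\positiveEdgeLabel$-label along the path, one may \emph{skip} it and still perform all subsequent flips in order, with the root function unchanged at every position $\le \positiveEdgeLabel_0$. One first uses this lemma together with length-minimality to establish that $\positiveEdgeLabel_0$ is indeed the maximum of the $\positiveEdgeLabel$-labels; then, applying it once more to skip the first flip (and hence also the last, which merely reinserts~$\positiveEdgeLabel_0$), one lands on a facet containing both $\positiveEdgeLabel_0$ and $\positiveEdgeLabel_\ell$ with equal roots---a double root, and the desired contradiction. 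No dihedral restriction, no step-by-step commutation, and no termination argument are needed.
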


We extract the crucial part of the proof of this proposition in the following lemma.
\begin{lemma}
\label{lem:preparation}
Let~$I_0 \edge I_1 \edge \cdots \edge I_{\ell+1}$ be a path in the increasing flip graph~$\flipGraph$ with~$\positiveEdgeLabel_k \eqdef \positiveEdgeLabel(I_k \sep I_{k+1})$ and such that~$\positiveEdgeLabel_0 = \max\{ \positiveEdgeLabel_0,\ldots,\positiveEdgeLabel_\ell\}$. Then, starting from~$I_0$, it is possible to skip the first flip at position~$\positiveEdgeLabel_0$, and directly successively flip positions~$\positiveEdgeLabel_1, \positiveEdgeLabel_2, \dots, \positiveEdgeLabel_\ell$.
If~$I_0 = I'_1 \edge I'_2 \edge \cdots \edge I'_{\ell+1}$ is the corresponding path for which~$\positiveEdgeLabel(I'_k,I'_{k+1}) = \positiveEdgeLabel_k$ for all~$k \in [\ell]$, we moreover have that $\Root{I'_k}{p} = \Root{I_k}{p}$ for any position~$p \le \positiveEdgeLabel_0$ and any~$k \in [\ell+1]$.
\end{lemma}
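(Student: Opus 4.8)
The plan is to prove Lemma~\ref{lem:preparation} by induction on~$k$, tracking the root functions along the two paths and using the update formula in Proposition~\ref{prop:roots&flips}\eqref{prop:roots&flips:update} as the main tool. The key point to exploit is that~$\positiveEdgeLabel_0 = \max\{\positiveEdgeLabel_0,\dots,\positiveEdgeLabel_\ell\}$, so the position~$\positiveEdgeLabel_0$ being flipped out first is \emph{larger} than all subsequent positions flipped out. Combined with the fact that all flips are increasing (hence positions flipped in are larger than positions flipped out), this should force everything of interest to happen strictly below~$\positiveEdgeLabel_0$, except possibly at the very position~$\positiveEdgeLabel_0$ itself, which is where the two paths reconcile.

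\textbf{Main induction.} First I would argue that each position~$\positiveEdgeLabel_k$ (for~$k\ge 1$) remains flippable in~$I'_k$ with the same direction as in~$I_k$, so that the path~$I_0 = I'_1 \edge \cdots \edge I'_{\ell+1}$ is well-defined; this is exactly what must be maintained by the induction. The inductive claim is: $\Root{I'_k}{p} = \Root{I_k}{p}$ for all~$p \le \positiveEdgeLabel_0$ and all~$k \in [\ell+1]$. For the base case~$k=1$: $I'_1 = I_0$, and we must compare~$\Root{I_0}{\cdot}$ with~$\Root{I_1}{\cdot}$. Since~$I_0 \edge I_1$ flips out~$\positiveEdgeLabel_0$ and flips in some~$\negativeEdgeLabel_0 > \positiveEdgeLabel_0$, the update formula changes roots only at positions~$p$ with~$\positiveEdgeLabel_0 < p \le \negativeEdgeLabel_0$; hence~$\Root{I_1}{p} = \Root{I_0}{p}$ for all~$p \le \positiveEdgeLabel_0$, which is the base case (note the claim is about~$p \le \positiveEdgeLabel_0$, so the position~$\positiveEdgeLabel_0$ itself is included and the roots there agree). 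For the inductive step, suppose~$\Root{I'_k}{p} = \Root{I_k}{p}$ for all~$p \le \positiveEdgeLabel_0$. The flip~$I_k \edge I_{k+1}$ flips out~$\positiveEdgeLabel_k \le \positiveEdgeLabel_0$ in direction~$\Root{I_k}{\positiveEdgeLabel_k}$, and the flip~$I'_k \edge I'_{k+1}$ flips out the \emph{same} position~$\positiveEdgeLabel_k$; by the induction hypothesis (applied at~$p = \positiveEdgeLabel_k$) the directions coincide, so by Proposition~\ref{prop:roots&flips}\eqref{prop:roots&flips:flip}–\eqref{prop:roots&flips:increasing} the position flipped in is the same, call it~$\negativeEdgeLabel_k$, and~$\negativeEdgeLabel_k > \positiveEdgeLabel_k$ in both. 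Now apply the update formula to both paths: $\Root{I_{k+1}}{p}$ and~$\Root{I'_{k+1}}{p}$ are obtained from~$\Root{I_k}{p}$ and~$\Root{I'_k}{p}$ respectively by applying~$s_{\Root{I_k}{\positiveEdgeLabel_k}} = s_{\Root{I'_k}{\positiveEdgeLabel_k}}$ on exactly the positions~$p$ with~$\positiveEdgeLabel_k < p \le \negativeEdgeLabel_k$, and leaving the others fixed. Since the two transformations are identical and the two root functions agree on~$\{p \le \positiveEdgeLabel_0\}$ by hypothesis, they continue to agree on~$\{p \le \positiveEdgeLabel_0\}$ after the flip.

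\textbf{The skipped flip and the conclusion.} The remaining content is that the modified path actually ends at a facet differing from~$I_{\ell+1}$ only by the deferred flip at~$\positiveEdgeLabel_0$ — more precisely that~$I_0 = I'_1 \edge \cdots \edge I'_{\ell+1}$ reaches the facet~$I_{\ell+1} \symdif \{\positiveEdgeLabel_0,\,?\}$ where~$\positiveEdgeLabel_0$ has \emph{not} been flipped out. I would verify this by bookkeeping the symmetric-difference of facets: the path~$I_1 \edge \cdots \edge I_{\ell+1}$ flips out the multiset~$\{\positiveEdgeLabel_1,\dots,\positiveEdgeLabel_\ell\}$ and flips in~$\{\negativeEdgeLabel_1,\dots,\negativeEdgeLabel_\ell\}$; the modified path flips out and in exactly the same multisets (as established above), but starts at~$I_0 = I_1 \cup \positiveEdgeLabel_0 \ssm \negativeEdgeLabel_0$ instead of~$I_1$. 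Since~$\positiveEdgeLabel_0$ is larger than every~$\positiveEdgeLabel_k$ (hence never flipped out along the modified path) and distinct from every~$\negativeEdgeLabel_k$ appearing — this last point needs a small check using that~$\positiveEdgeLabel_0$ is already \emph{in} all the intermediate facets of the original path from~$I_1$ on, so it cannot be flipped in — the position~$\positiveEdgeLabel_0$ stays untouched, and the endpoint is~$I_{\ell+1} \cup \positiveEdgeLabel_0 \ssm \negativeEdgeLabel_0$ as desired. I expect the main obstacle to be precisely this last disjointness argument (that~$\positiveEdgeLabel_0$ is never flipped in along either path), which requires observing that~$\positiveEdgeLabel_0 \in I_k$ for all~$k \ge 1$: indeed~$\positiveEdgeLabel_0$ is only flipped out at the very first step of the original path and, being maximal among the out-labels, is never reintroduced; translating this to the modified path is where one must be careful, but Proposition~\ref{prop:unicity} (giving~$\max\{\negativeEdgeLabel_1,\dots,\negativeEdgeLabel_k\} = \max(I_{k+1}\ssm I_1)$-type control) provides the needed handle.
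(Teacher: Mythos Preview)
Your overall strategy---induction on~$k$ using Proposition~\ref{prop:roots&flips}\eqref{prop:roots&flips:update}---matches the paper's, and your base case is correct. But the inductive step contains a genuine gap: you assert that because the flip directions~$\Root{I_k}{\positiveEdgeLabel_k}$ and~$\Root{I'_k}{\positiveEdgeLabel_k}$ coincide, the positions flipped in coincide as well. This does not follow. By Proposition~\ref{prop:roots&flips}\eqref{prop:roots&flips:flip}, the position flipped in is the unique position \emph{in the complement of the facet} whose root matches~$\pm\Root{\cdot}{\positiveEdgeLabel_k}$; but~$I_k$ and~$I'_k$ are different facets with different complements, and your inductive hypothesis only controls the root function at positions~$p \le \positiveEdgeLabel_0$. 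When~$\negativeEdgeLabel_k > \positiveEdgeLabel_0$, you have no information about~$\Root{I'_k}{\negativeEdgeLabel_k}$, and in fact~$\negativeEdgeLabel'_k \eqdef \negativeEdgeLabel(I'_k \sep I'_{k+1})$ can differ from~$\negativeEdgeLabel_k$ (for instance when~$\positiveEdgeLabel_0 < \negativeEdgeLabel_k < \negativeEdgeLabel_0$, the first flip of the original path has reflected~$\Root{\cdot}{\negativeEdgeLabel_k}$).

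The paper's proof repairs exactly this point: it shows that if~$\negativeEdgeLabel_k \le \positiveEdgeLabel_0$ then indeed~$\negativeEdgeLabel_k = \negativeEdgeLabel'_k$ (via the induction hypothesis at~$p = \negativeEdgeLabel_k$), and symmetrically if~$\negativeEdgeLabel'_k \le \positiveEdgeLabel_0$; hence either~$\negativeEdgeLabel_k = \negativeEdgeLabel'_k$, or both exceed~$\positiveEdgeLabel_0$. In the latter case one does \emph{not} conclude they are equal, but observes that for any~$p \le \positiveEdgeLabel_0$ the condition~$\positiveEdgeLabel_k < p \le \negativeEdgeLabel_k$ is equivalent to~$\positiveEdgeLabel_k < p \le \negativeEdgeLabel'_k$, so the two update formulas act identically on the range~$\{p \le \positiveEdgeLabel_0\}$. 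Your final paragraph (about the endpoint~$I'_{\ell+1}$) is not part of the lemma's claim, and its bookkeeping also relies on the same unjustified equality of flipped-in positions; once the inductive step is fixed as above, that paragraph should simply be dropped.
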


\begin{proof}
The proof is based on the observation that flips are described using the root function, and that flipping out~$i$ and flipping in~$j$ only affects the roots located between positions~$i$ and~$j$, see Proposition~\ref{prop:roots&flips}.
Remember that a position $p$ in a facet $I$ is increasingly flippable if and only if the root $\Root{I}{p}$ is contained in the inversion set of $\rho$, compare Propositions~\ref{prop:roots&flips}\eqref{prop:roots&flips:flippable} and~\ref{prop:roots&flips}\eqref{prop:roots&flips:increasing}.

We prove the statement by induction on~$k$.
Namely, we prove that
\begin{enumerate}[(i)]
  \item $\Root{I'_1}{p} = \Root{I_1}{p}$ for all positions~$p \leq \positiveEdgeLabel_0$, and that \label{item:basecase}
  \item for any~$k \in [\ell]$, if $\Root{I'_k}{p} = \Root{I_k}{p}$ for all positions~$p \leq \positiveEdgeLabel_0$, then the position $\positiveEdgeLabel_k$ is increasingly flippable in $I'_k$ and $\Root{I'_{k+1}}{p} = \Root{I_{k+1}}{p}$ for all positions~${p \leq \positiveEdgeLabel_0}$. \label{item:inductioncase}
\end{enumerate}

To prove~\eqref{item:basecase}, observe that the flip $I_0 \edge I_1$ does not affect roots located to the left of position~$\positiveEdgeLabel_0$, so we have~${\Root{I'_1}{p} = \Root{I_0}{p} = \Root{I_1}{p}}$ for any position~$p \le \positiveEdgeLabel_0$.

To prove~\eqref{item:inductioncase}, we assume that $\Root{I'_k}{p} = \Root{I_k}{p}$ for all positions~$p \leq \positiveEdgeLabel_0$.
In particular,  $\Root{I'_k}{\positiveEdgeLabel_k} = \Root{I_k}{\positiveEdgeLabel_k}$ because $\positiveEdgeLabel_k \leq \positiveEdgeLabel_0$.
Since~$\positiveEdgeLabel_k$ is increasingly flippable in~$I_k$, this root is in the inversion set of~$\rho$, and therefore, $\positiveEdgeLabel_k$ is also increasingly flippable in~$I'_k$. Here, we used twice Propositions~\ref{prop:roots&flips}\eqref{prop:roots&flips:flippable} and~\ref{prop:roots&flips}\eqref{prop:roots&flips:increasing}.
Define now~$\negativeEdgeLabel_k \eqdef \negativeEdgeLabel(I_k \sep I_{k+1})$ and~$\negativeEdgeLabel'_k \eqdef \negativeEdgeLabel(I'_k \sep I'_{k+1})$.
If~$\negativeEdgeLabel_k \le \positiveEdgeLabel_0$, then
$${\Root{I'_k}{\positiveEdgeLabel_k} = \Root{I_k}{\positiveEdgeLabel_k} = \Root{I_k}{\negativeEdgeLabel_k} = \Root{I'_k}{\negativeEdgeLabel_k}},$$
and thus~$\negativeEdgeLabel_k = \negativeEdgeLabel'_k$. Here, we used twice Proposition~\ref{prop:roots&flips}\eqref{prop:roots&flips:flip}.
Similarly, if~$\negativeEdgeLabel'_k \le \positiveEdgeLabel_0$, then~$\negativeEdgeLabel_k = \negativeEdgeLabel'_k$.
We therefore obtain that either both~$\negativeEdgeLabel_k$ and~$\negativeEdgeLabel'_k$ are located to the right of~$\positiveEdgeLabel_0$, or~$\negativeEdgeLabel_k = \negativeEdgeLabel'_k$.
In both cases, we know that $\positiveEdgeLabel_k < p \le \negativeEdgeLabel_k$ if and only if~$\positiveEdgeLabel_k < p \le \negativeEdgeLabel'_k$ for any position~$p \leq \positiveEdgeLabel_0$.
Since~$\Root{I'_k}{p} = \Root{I_k}{p}$, we thus obtain that~$\Root{I'_{k+1}}{p} = \Root{I_{k+1}}{p}$ by Proposition~\ref{prop:roots&flips}\eqref{prop:roots&flips:update}.
\end{proof}

\begin{proof}[Proof of Proposition~\ref{prop:intersection}]
Let~$I = I_0 \edge I_1 \edge \cdots \edge I_\ell \edge I_{\ell+1} = J$ be a path from~$I$ to~$J$ in the increasing flip graph~$\flipGraph$.
For~$0 \le k \le \ell$, define $\positiveEdgeLabel_k \eqdef \positiveEdgeLabel(I_k \sep I_{k+1})$ and~$\negativeEdgeLabel_k \eqdef \negativeEdgeLabel(I_k \sep I_{k+1})$.
In other words,~$\positiveEdgeLabel_k \in I_k$, $\negativeEdgeLabel_k \in I_{k+1}$ and~${I_k \ssm \positiveEdgeLabel_k = I_{k+1} \ssm \negativeEdgeLabel_k}$.

We assume by means of contradiction that there is a position in $I \cap J$ flipped out during the flip path which is flipped back later in the path.
Up to shortening the path, we can assume without loss of generality that this position is flipped out during the first flip~$I_0 \edge I_1$ and flipped back in during the last flip~$I_\ell \edge I_{\ell+1}$, \ie $\positiveEdgeLabel_0 = \negativeEdgeLabel_\ell$.
We moreover assume that our path is a minimal length path which flips back in a position already flipped out.

Under these assumptions, we prove that
\begin{enumerate}[(i)]
\item $\positiveEdgeLabel_0 = \max \{ \positiveEdgeLabel_0, \dots, \positiveEdgeLabel_\ell \}$, \label{proof:intersection:i}
\item starting from facet~$I$, we can successively flip positions~$\positiveEdgeLabel_1, \positiveEdgeLabel_2, \dots, \positiveEdgeLabel_{\ell-1}$ (just skipping the first and the last flips at positions~$\positiveEdgeLabel_0$ and~$\positiveEdgeLabel_\ell$), and \label{proof:intersection:ii}
\item the facet~$J'$ obtained after these flips has a double root at positions~$\positiveEdgeLabel_0$ and~$\positiveEdgeLabel_\ell$. \label{proof:intersection:iii}
\end{enumerate}

To prove~\eqref{proof:intersection:i}, assume that the index~$m \in [0,\ell]$ such that~$\positiveEdgeLabel_m = \max\{\positiveEdgeLabel_0, \dots, \positiveEdgeLabel_\ell\}$ is different from~$0$. Note that~$0 < m < \ell$ since~$\positiveEdgeLabel_\ell < \negativeEdgeLabel_\ell = \positiveEdgeLabel_0$. Consider the path of flips
$$I = I_0 \edge \cdots \edge I_m = I'_m \edge I'_{m+1} \edge \cdots \edge I'_{\ell}$$
defined by $\positiveEdgeLabel(I_k, I_{k+1}) = \positiveEdgeLabel_k$ for~$k < m$ and $\positiveEdgeLabel(I'_k, I'_{k+1}) = \positiveEdgeLabel_{k+1}$ for~$k \ge m$.
In other words, starting from~$I$, we flip positions~$\positiveEdgeLabel_0, \dots, \positiveEdgeLabel_{m-1}, \positiveEdgeLabel_{m+1}, \dots, \positiveEdgeLabel_\ell$, skipping the flip at position~$\positiveEdgeLabel_m$.
According to Lemma~\ref{lem:preparation}, all flips in the path~$I'_m \edge I'_{m+1} \edge \cdots \edge I'_{\ell}$ are admissible since $\positiveEdgeLabel_k \le \positiveEdgeLabel_m$ for all~$k \ge m$, and we have
$$\Root{I'_{\ell-1}}{\positiveEdgeLabel_\ell} = \Root{I_{\ell}}{\positiveEdgeLabel_\ell} = \Root{I_{\ell}}{\positiveEdgeLabel_0} = \Root{I'_{\ell-1}}{\positiveEdgeLabel_0}.$$
Therefore, we flip back position~$\positiveEdgeLabel_0$ in facet~$I'_{\ell}$, thus contradicting the length minimality of the path~$I = I_0 \edge I_1 \edge \cdots \edge I_\ell \edge I_{\ell+1} = J$.

We now prove~\eqref{proof:intersection:ii} and~\eqref{proof:intersection:iii}. By~\eqref{proof:intersection:i}, the path~$I = I_0 \edge I_1 \edge \cdots \edge I_\ell \edge I_{\ell+1} = J$ satisfies the hypothesis of Lemma~\ref{lem:preparation}.
We therefore obtain directly~\eqref{proof:intersection:ii}.
Let~$J'$ denote the facet of~$\facets$ obtained after flipping successively~$\positiveEdgeLabel_1, \positiveEdgeLabel_2, \dots, \positiveEdgeLabel_{\ell-1}$ starting from~$I$.
We moreover obtain
$$\Root{J'}{\positiveEdgeLabel_\ell} = \Root{I_\ell}{\positiveEdgeLabel_\ell} = \Root{I_\ell}{\positiveEdgeLabel_0} = \Root{J'}{\positiveEdgeLabel_0},$$
where the first and last equalities are ensured by Lemma~\ref{lem:preparation}, while the middle one holds by Proposition~\ref{prop:roots&flips}\eqref{prop:roots&flips:flip} since we flip position~$\positiveEdgeLabel_\ell$ to position~$\negativeEdgeLabel_\ell = \positiveEdgeLabel_0$ in facet~$I_\ell$.
Since the facet~$J'$ contains both~$\positiveEdgeLabel_0$ and~$\positiveEdgeLabel_\ell$, it has a double root, thus proving~\eqref{proof:intersection:iii}.
\end{proof}

The following theorem is now a direct consequence of Proposition~\ref{prop:intersection}.

\begin{theorem}
\label{thm:fallingPath}
There is at most one $\positiveEdgeLabel$-falling (resp.~$\negativeEdgeLabel$-falling) path between any two facets~$I$ and~$J$ of a double root free subword complex~$\subwordComplex$.
If it exists, its length is given by~$|I \ssm J| = |J \ssm I|$.
\end{theorem}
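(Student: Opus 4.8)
The plan is to prove the statement for the positive edge labeling~$\positiveEdgeLabel$; the case of~$\negativeEdgeLabel$ then follows by the reversal operation of Remark~\ref{rem:reversal} (or from Proposition~\ref{prop:bijectionFallingPaths}). So I would fix two facets~$I, J$ and a $\positiveEdgeLabel$-falling path~${I = I_1 \edge \cdots \edge I_{\ell+1} = J}$, write~$\positiveEdgeLabel_k \eqdef \positiveEdgeLabel(I_k \sep I_{k+1})$ and~$\negativeEdgeLabel_k \eqdef \negativeEdgeLabel(I_k \sep I_{k+1})$, and show that the positions flipped out along this path are precisely the elements of~$I \ssm J$, each occurring exactly once. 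This single claim gives both assertions: the length is then~$|I \ssm J| = |J \ssm I|$ (all facets of~$\subwordComplex$ have the same cardinality), and uniqueness follows because a $\positiveEdgeLabel$-falling path issued from~$I$ is determined by the multiset of positions it flips out, as recalled just before Proposition~\ref{prop:bijectionFallingPaths}.

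The first two steps are bookkeeping with the increasing flip condition. I would first note that~$\positiveEdgeLabel_1 > \positiveEdgeLabel_2 > \cdots > \positiveEdgeLabel_\ell$ strictly: since the path is $\positiveEdgeLabel$-falling these labels are weakly decreasing, and~$\positiveEdgeLabel_k = \positiveEdgeLabel_{k+1}$ is impossible because~$\positiveEdgeLabel_k$ is flipped out of~$I_k$, hence absent from~$I_{k+1}$, whereas~$\positiveEdgeLabel_{k+1}$ must be a position of~$I_{k+1}$. In particular every position is flipped out at most once. Similarly, no position is flipped in twice: if~$\negativeEdgeLabel_a = \negativeEdgeLabel_b = p$ with~$a < b$, then~$p$ must be flipped out at some intermediate step~$c$ with~$a < c < b$, but then~$\positiveEdgeLabel_c \le \positiveEdgeLabel_a < \negativeEdgeLabel_a = p = \positiveEdgeLabel_c$, a contradiction.

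The heart of the argument, and the place where double root freeness enters through Proposition~\ref{prop:intersection}, is to rule out that a single position~$p$ is both flipped out (at some step~$a$) and flipped in (at some step~$b \ne a$) along the path. If~$b < a$, strict decrease gives~$p = \positiveEdgeLabel_a < \positiveEdgeLabel_b$, while~$\positiveEdgeLabel_b < \negativeEdgeLabel_b = p$, which is absurd. If~$a < b$, then by the two ``at most once'' statements the membership of~$p$ in the facets of the path is entirely forced: $p$ belongs to~$I_1, \dots, I_a$ and to~$I_{b+1}, \dots, I_{\ell+1}$ but to none of~$I_{a+1}, \dots, I_b$. In particular~$p \in I \cap J$ while~$p \notin I_{a+1}$, and since~$I_{a+1}$ lies in the interval~$[I,J]$ this contradicts Proposition~\ref{prop:intersection}. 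I expect this last case to be the main obstacle, as it is the only point where the hypothesis that~$\subwordComplex$ is double root free is needed (consistently with the counterexamples announced in Remark~\ref{rem:counterExamples}).

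Once this is established, the positions flipped out along the path are pairwise distinct and none is ever flipped back in, so a position is flipped out exactly once if and only if it lies in~$I$ but not in~$J$; the multiset of positions flipped out is therefore the set~$I \ssm J$. This yields~$\ell = |I \ssm J| = |J \ssm I|$ and, combined with the fact that a $\positiveEdgeLabel$-falling path from~$I$ is determined by this multiset, the uniqueness of such a path. Translating everything by the reversal operation of Remark~\ref{rem:reversal} gives the corresponding statement for~$\negativeEdgeLabel$-falling paths.
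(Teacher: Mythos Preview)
Your proof is correct and follows essentially the same route as the paper's: both arguments reduce to showing that along a $\positiveEdgeLabel$-falling path no position can be flipped out and later flipped back in, invoking Proposition~\ref{prop:intersection} for the double root free hypothesis, and then conclude that the flipped-out positions are exactly~$I \ssm J$. Your version is somewhat more explicit in the preliminary bookkeeping (strict decrease of the~$\positiveEdgeLabel_k$, no repeated flip-ins), but the structure and the key step are the same.
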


\begin{proof}
Let~$I = I_1 \edge \cdots \edge I_{\ell+1} = J$ be a $\positiveEdgeLabel$-falling path from~$I$ to~$J$ in the increasing flip graph~$\flipGraph$, and define $\positiveEdgeLabel_k \eqdef \positiveEdgeLabel(I_k \sep I_{k+1})$ and~$\negativeEdgeLabel_k \eqdef \negativeEdgeLabel(I_k \sep I_{k+1})$.
For~$k < k'$, we then have~$\negativeEdgeLabel_k \ne \positiveEdgeLabel_{k'}$ (because the flips are increasing and the path is~$\positiveEdgeLabel$-falling) and~$\positiveEdgeLabel_k \ne \negativeEdgeLabel_{k'}$ (otherwise, the position~$\positiveEdgeLabel_k = \negativeEdgeLabel_{k'}$ would be flipped out and flipped back in during the path, thus contradicting Proposition~\ref{prop:intersection}).
This implies that~$\positiveEdgeLabel_k \in I \ssm J$ and~$\negativeEdgeLabel_k \in J \ssm I$ for all~$k \in [\ell]$.
Therefore~$\positiveEdgeLabel_k$ is the $k$\ordinal{} largest position of~$I \ssm J$ and~$\ell = |I \ssm J| = |J \ssm I|$.
This uniquely determines the $\positiveEdgeLabel$-falling path from~$I$ to~$J$.
The proof is similar for the $\negativeEdgeLabel$-falling path (see also Proposition~\ref{prop:bijectionFallingPaths}).
\end{proof}

\begin{corollary}
\label{coro:lengthPaths}
Let~$I$ and~$J$ be two facets of a double root free subword complex such that~$I \directedPath J$.
The unique $\positiveEdgeLabel$-rising (resp.~$\negativeEdgeLabel$-rising) path from~$I$ to~$J$ has maximal length among all path from~$I$ to~$J$.
Moreover, if there is a $\positiveEdgeLabel$-falling (resp.~$\negativeEdgeLabel$-falling) path from~$I$ to~$J$, it has minimal length.
\end{corollary}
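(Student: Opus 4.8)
The plan is to prove the two assertions separately: minimality of a falling path follows from a crude counting bound together with Theorem~\ref{thm:fallingPath}, while maximality of the rising path is obtained by revisiting the length behaviour of the straightening procedure from the proof of Theorem~\ref{thm:ELlabeling}.

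For the minimality statement, I would first observe that any path $I = K_0 \edge \cdots \edge K_L = J$ in $\flipGraph$ flips out exactly one position at each of its $L$ steps, and that every position of $I \ssm J$ gets flipped out at least once along it, since it lies in $K_0 = I$ but not in $K_L = J$. As $I \ssm J$ has $|I \ssm J| = |J \ssm I|$ pairwise distinct elements, this yields $L \ge |I \ssm J|$ for \emph{every} path from $I$ to $J$. By Theorem~\ref{thm:fallingPath}, a $\positiveEdgeLabel$-falling (resp.~$\negativeEdgeLabel$-falling) path from $I$ to $J$, when it exists, has length exactly $|I \ssm J|$, hence it is a shortest path.

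For the maximality statement, I would take an arbitrary path $p$ from $I$ to $J$ and run on it the transformation used in the proof of Theorem~\ref{thm:ELlabeling}: as long as the current path is not $\positiveEdgeLabel$-rising, one picks the first violating index, an index $k'$ realizing the minimum of the remaining positive labels, and replaces the length-two $\positiveEdgeLabel$-descent $I_{k'-1} \edge I_{k'} \edge I_{k'+1}$ by the $\positiveEdgeLabel$-rising path furnished by Lemma~\ref{lem:dihedral} in the rank-two parabolic restriction spanned by the roots $\Root{I_{k'-1}}{\positiveEdgeLabel(I_{k'-1} \sep I_{k'})}$ and $\Root{I_{k'-1}}{\positiveEdgeLabel(I_{k'} \sep I_{k'+1})}$; iterating this terminates at the unique $\positiveEdgeLabel$-rising path from $I$ to $J$ (as in the proof of Theorem~\ref{thm:ELlabeling}, uniqueness being Proposition~\ref{prop:unicity}). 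The key new point is that, since $\subwordComplex$ is double root free, this restricted dihedral subword complex is never in the first case of the proof of Lemma~\ref{lem:dihedral}: the two positions $\positiveEdgeLabel(I_{k'-1} \sep I_{k'})$ and $\positiveEdgeLabel(I_{k'} \sep I_{k'+1})$ are distinct and both flippable in $I_{k'-1}$ — the second one is untouched by the first flip since it is smaller than both $\positiveEdgeLabel(I_{k'-1} \sep I_{k'})$ and $\negativeEdgeLabel(I_{k'-1} \sep I_{k'})$, so Proposition~\ref{prop:roots&flips}\eqref{prop:roots&flips:update} keeps its root, which remains in $\pm\inv(\rho)$ by Proposition~\ref{prop:roots&flips}\eqref{prop:roots&flips:flippable} — hence their roots cannot coincide and span a plane. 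Therefore each step replaces a length-two subpath by a $\positiveEdgeLabel$-rising subpath of length at least two (flip $\positiveEdgeLabel(I_{k'} \sep I_{k'+1})$, then $\positiveEdgeLabel(I_{k'-1} \sep I_{k'})$, then a possibly empty $\positiveEdgeLabel$-rising sequence), so it never decreases the total length. It follows that the $\positiveEdgeLabel$-rising path from $I$ to $J$ is at least as long as $p$, and since $p$ was arbitrary it has maximal length among all paths from $I$ to $J$. The assertions for $\negativeEdgeLabel$ then follow by the reversal operation of Remark~\ref{rem:reversal}.

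The main obstacle is precisely this last argument: one must be careful that the second position involved in each straightening step is genuinely flippable in the earlier facet $I_{k'-1}$, so that a coincidence of its root with the other root really is a double root, thereby making double root freeness exactly the hypothesis that excludes the length-collapsing first case of Lemma~\ref{lem:dihedral}. Once this is secured, the remaining bookkeeping — termination of the straightening procedure and the fact that it never shortens the path — is routine.
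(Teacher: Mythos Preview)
Your proof is correct and follows essentially the same approach as the paper's own proof. For the rising path, both you and the paper run the straightening procedure of Theorem~\ref{thm:ELlabeling} and observe that double root freeness rules out the length-collapsing first case of Lemma~\ref{lem:dihedral}; you add the useful detail (left implicit in the paper) that the smaller label~$\positiveEdgeLabel_{k'}$ is genuinely flippable in~$I_{k'-1}$, so that a coincidence of roots really would be a double root. For the falling path, the paper simply cites Theorem~\ref{thm:fallingPath}, while you spell out the elementary lower bound~$L \ge |I \ssm J|$ that makes this citation conclusive; this is a harmless and helpful elaboration.
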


\begin{proof}
Consider a maximal length path from~$I$ to~$J$.
According to the proof of Theorem~\ref{thm:ELlabeling}, we can modify this path to obtain the unique $\positiveEdgeLabel$-rising path from~$I$ to~$J$.
In the situation of a double root free subword complex, this procedure does not decrease the length of the path, since the first distinguished case in the proof of Lemma~\ref{lem:dihedral} cannot occur.
This proves the result for the $\positiveEdgeLabel$-rising path.
For the $\positiveEdgeLabel$-falling path, this follows directly from Theorem~\ref{thm:fallingPath}.
The proof is similar for the negative edge labeling~$\negativeEdgeLabel$.
\end{proof}

\begin{remark}
\label{rem:counterExamples}
Note that the conclusions of Proposition~\ref{prop:intersection}, Theorem~\ref{thm:fallingPath}, and Corollary~\ref{coro:lengthPaths} do indeed not hold if~$\subwordComplex$ has double roots.
Whenever one has a double root, one can reduce the situation to type $A_1$  with generator~$s$ for the word~$\Q = \sq{sss}$ and the element~$\rho = s$, using Proposition~\ref{prop:restriction} (one might actually get that the word $\Q$ contains more than three letters, but the argument stays the same).
In this case, the increasing flip graph~$\flipGraph$ consists of the two paths
$$
  \{ 1,2 \} \, \edgePositiveLabel{2} \, \{ 1,3 \} \, \edgePositiveLabel{1} \, \{ 2,3 \} \quad\text{and}\quad \{1,2\} \, \edgePositiveLabel{1} \, \{2,3\},
$$
where the numbers on the edges are their positive edge labels.
First, $\{ 1,3 \}$ lies in the interval~$[\{ 1,2 \}, \{ 2,3 \}]$ of the increasing flip graph~$\flipGraph$, but does not contain~$\{1,2\} \cap \{2,3\} = \{2\}$, thus contradicting Proposition~\ref{prop:intersection}.
Second, both paths are $\positiveEdgeLabel$-falling, contradicting the conclusions of Theorem~\ref{thm:fallingPath}.
Third, the second path is $\positiveEdgeLabel$-rising and shorter than the first $\positiveEdgeLabel$-falling path, contradicting the conclusions of Corollary~\ref{coro:lengthPaths}.
\end{remark}

\begin{corollary}
\label{coro:moebius}
The M\"obius function on the increasing flip poset~$\flipPoset$ of a double root free subword complex~$\subwordComplex$ is given by
$$ \mu(I,J) =
  \begin{cases}
    (-1)^{| J \ssm I|} & \text{ if there is a $\positiveEdgeLabel$-falling (resp.~$\negativeEdgeLabel$-falling) path from $I$ to $J$,} \\
    0                  & \text{ otherwise.}
  \end{cases}
$$
\end{corollary}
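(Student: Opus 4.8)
The plan is to derive this immediately from Proposition~\ref{prop:Moebius} and Theorem~\ref{thm:fallingPath}. First, since $\subwordComplex$ is double root free, Proposition~\ref{prop:characterizationDoubleRootFree} guarantees that the increasing flip graph $\flipGraph$ \emph{is} the Hasse diagram of the increasing flip poset $\flipPoset$. Therefore Theorem~\ref{thm:ELlabeling} says that $\positiveEdgeLabel$ (and likewise $\negativeEdgeLabel$) is an EL-labeling of the poset $\flipPoset$ in the sense of Section~\ref{subsec:ELlabelings}, so that $\flipPoset$ is EL-shellable and Proposition~\ref{prop:Moebius} applies to it with the labeling $\positiveEdgeLabel$.

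Next I would fix two facets $I,J$ of $\subwordComplex$. If $I \not\directedPath J$, then $\mu(I,J)=0$ by the very definition of the M\"obius function, and there is no path at all from $I$ to $J$, hence in particular no $\positiveEdgeLabel$-falling path, so the stated formula holds trivially. If $I \directedPath J$, Proposition~\ref{prop:Moebius} gives
$$\mu(I,J) = \even_{\positiveEdgeLabel}(I,J) - \odd_{\positiveEdgeLabel}(I,J),$$
where $\even_{\positiveEdgeLabel}(I,J)$ and $\odd_{\positiveEdgeLabel}(I,J)$ count the $\positiveEdgeLabel$-falling paths from $I$ to $J$ of even and of odd length, respectively.

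Finally I would invoke Theorem~\ref{thm:fallingPath}: there is at most one $\positiveEdgeLabel$-falling path from $I$ to $J$, and when it exists its length equals $|I \ssm J| = |J \ssm I|$. Consequently, if such a path exists, exactly one of $\even_{\positiveEdgeLabel}(I,J)$, $\odd_{\positiveEdgeLabel}(I,J)$ equals $1$ and the other equals $0$, the relevant one being determined by the parity of $|J \ssm I|$; this yields $\mu(I,J) = (-1)^{|J \ssm I|}$. If no $\positiveEdgeLabel$-falling path exists, then $\even_{\positiveEdgeLabel}(I,J) = \odd_{\positiveEdgeLabel}(I,J) = 0$ and $\mu(I,J) = 0$. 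Running the same argument with $\negativeEdgeLabel$ in place of $\positiveEdgeLabel$ gives the ``resp.'' version; the two hypotheses are indeed equivalent by Proposition~\ref{prop:bijectionFallingPaths}.

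I do not expect a genuine obstacle in this proof: all the substantive work sits in Theorem~\ref{thm:fallingPath} (which rests on Proposition~\ref{prop:intersection} and Lemma~\ref{lem:preparation}) and in the identification of $\flipGraph$ with a Hasse diagram. The only points requiring a line of care are the separate treatment of the degenerate case $I \not\directedPath J$, so as to stay within the hypotheses of Proposition~\ref{prop:Moebius}, and the observation that the parity in the exponent is well defined precisely because the length of the (unique) falling path is forced to be $|J \ssm I|$.
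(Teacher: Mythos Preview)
Your argument is correct and follows the same route as the paper, which records the proof in one line as ``a direct consequence of Propositions~\ref{prop:Moebius} and~\ref{prop:bijectionFallingPaths} and Theorem~\ref{thm:fallingPath}.'' You simply make explicit the preliminary step (via Proposition~\ref{prop:characterizationDoubleRootFree} and Theorem~\ref{thm:ELlabeling}) that the labeling is an EL-labeling of the poset, which the paper leaves implicit since the whole section is devoted to double root free subword complexes.
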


\begin{proof}
This is a direct consequence of Propositions~\ref{prop:Moebius} and~\ref{prop:bijectionFallingPaths} and Theorem~\ref{thm:fallingPath}.
\end{proof}

By this corollary, we can compute the M\"obius function of an interval~$[I,J]$ of the increasing flip poset as soon as we can decide whether or not there is a $\positiveEdgeLabel$-falling path from~$I$ to~$J$.
According to Proposition~\ref{prop:fallingPath}, there is always a $\positiveEdgeLabel$-falling path from the positive greedy facet to the negative greedy facet of a spherical subword complex.
We therefore obtain the value of the M\"obius function on the increasing flip poset~$\flipPoset$ of a spherical double root free subword complex.

\begin{corollary}
In a spherical double root free subword complex~$\subwordComplex$, we have
$$\mu\big(\positiveFacet,\negativeFacet\big) = (-1)^{|\Q| - \ell(\rho)}.$$
\end{corollary}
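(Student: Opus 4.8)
The plan is to deduce this directly from Corollary~\ref{coro:moebius}, Proposition~\ref{prop:fallingPath}, and the elementary observation that every facet of~$\subwordComplex$ has cardinality~$|\Q| - \ell(\rho)$ (a facet being the complement of a reduced expression of~$\rho$, which has~$\ell(\rho)$ letters). First, since~$\subwordComplex$ is spherical, Proposition~\ref{prop:fallingPath} provides a $\positiveEdgeLabel$-falling path from the positive greedy facet~$\positiveFacet$ to the negative greedy facet~$\negativeFacet$. Since~$\subwordComplex$ is moreover double root free, Corollary~\ref{coro:moebius} applies and yields
$$\mu\big(\positiveFacet, \negativeFacet\big) = (-1)^{|\negativeFacet \ssm \positiveFacet|}.$$

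It then remains to check that~$|\negativeFacet \ssm \positiveFacet| = |\Q| - \ell(\rho)$. For this I would invoke Theorem~\ref{thm:fallingPath}: the $\positiveEdgeLabel$-falling path from~$\positiveFacet$ to~$\negativeFacet$ is unique, and its length equals~$|\positiveFacet \ssm \negativeFacet| = |\negativeFacet \ssm \positiveFacet|$. On the other hand, the falling path exhibited in the proof of Proposition~\ref{prop:fallingPath} is obtained by flipping out, in decreasing order, all~$|\positiveFacet| = |\Q| - \ell(\rho)$ positions of~$\positiveFacet$, and so has length~$|\Q| - \ell(\rho)$. Comparing the two lengths (equivalently, noting that along this path each position of~$\positiveFacet$ is flipped out and never flipped back in, so that~$\positiveFacet$ and~$\negativeFacet$ are disjoint) gives~$|\negativeFacet \ssm \positiveFacet| = |\negativeFacet| = |\Q| - \ell(\rho)$. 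Substituting into the displayed formula completes the proof.

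I do not expect a genuine obstacle here; the statement is a straightforward corollary packaging Proposition~\ref{prop:fallingPath}, Corollary~\ref{coro:moebius}, and a cardinality count. The only step calling for a brief justification is the identity~$|\negativeFacet \ssm \positiveFacet| = |\Q| - \ell(\rho)$, which one can obtain either by reading off the length of the falling path constructed in Proposition~\ref{prop:fallingPath}, or slightly more abstractly from the uniqueness and length formula of Theorem~\ref{thm:fallingPath} together with the fact that all facets of~$\subwordComplex$ share the common size~$|\Q| - \ell(\rho)$.
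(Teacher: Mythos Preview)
Your proof is correct and follows the same approach as the paper, which simply invokes Proposition~\ref{prop:fallingPath} and Corollary~\ref{coro:moebius} without further detail. You go a bit further by explicitly justifying~$|\negativeFacet \ssm \positiveFacet| = |\Q| - \ell(\rho)$ via the length of the falling path constructed in Proposition~\ref{prop:fallingPath} and the length formula of Theorem~\ref{thm:fallingPath}; this is a welcome clarification of a step the paper leaves implicit.
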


Observe again that this result fails if we drop the condition that~$\subwordComplex$ is spherical.
The subword complex~$\subwordComplex[\Qex,\rhoex]$ of Example~\ref{exm:toto} and the subword complex~$\subwordComplex[\sq{\tau}_1\sq{\tau}_2\sq{\tau}_1\sq{\tau}_2, \tau_1\tau_2]$ provide counter-examples.


\subsection{Two relevant examples}
\label{subsec:cambrian}

We finish this section by two relevant families of examples of double root free subword complexes, to which the above results can be applied.

\subsubsection{Cambrian lattices}
\label{subsubsec:cambrian}

We start with recalling background on sortable elements in Coxeter groups and Cambrian lattices.
Those were introduced by N.~Reading in~\cite{Reading-latticeCongruences, Reading-cambrianLattices, Reading-coxeterSortable, Reading-sortableElements}, originally to connect finite type cluster complexes to noncrossing partitions.
Fix a Coxeter element~$c$ of~$W$, and a reduced expression~$\sq{c}$ of~$c$.
That is to say, $\sq{c}$ is a word on~$S$ where each simple reflection appears precisely once.
For~$w \in W$, we denote by~$\sw{w}{c}$ the \defn{$\sq{c}$-sorting word} of~$w$, \ie the lexicographically first (as a sequence of positions) reduced subword of~$\sq{c}^\infty$ for $w$.
Moreover, this word can be written as~$\sw{w}{c} = \sq{c}_{K_1}\sq{c}_{K_2}\cdots\sq{c}_{K_p}$, where~$\sq{c}_{K}$ denotes the subword of~$\sq{c}$ only taking the simple reflections in~$K \subset S$ into account.
The element~$w$ is then called \defn{$c$-sortable} if ${K_1\supseteq K_2\supseteq\cdots\supseteq K_p}$.
Observe that the property of being $c$-sortable does not depend on the particular reduced expression~$\sq{c}$ of the Coxeter element~$c$.
We denote by~$\sortable{c}{W}$ the set of $c$-sortable elements in~$W$.
The order induced by the weak order on~$W$ turns~$\sortable{c}{W}$ into a lattice, the \defn{Cambrian lattice} for the Coxeter element~$c$~\cite{Reading-sortableElements}.

It was observed in~\cite[Remark~2.1]{Reading-coxeterSortable} that Cambrian lattices are naturally equipped with a search-tree structure.
The \defn{$\sq{c}$-sorting tree}~$\sortingTree$ has an edge between two $c$-sortable elements~$w$ and~$w'$ if the $\sq{c}$-sorting word for~$w$ is obtained from the one for~$w'$ by deleting the last letter.
See Example~\ref{exm:123} and \fref{fig:sortingTree}.
Observe that the $\sq{c}$-sorting tree really depends on the particular choice for the reduced expression~$\sq{c}$, and not only on the Coxeter element~$c$.

In their recent work~\cite{KallipolitiMuhle}, M.~Kallipoliti and H.~M\"uhle define an EL-labeling of the Cambrian lattice~$\sortable{c}{W}$ as follows.
They label a cover relation~$w \edge w'$ of~$\sortable{c}{W}$ by the first position within~$\sq{c}^\infty$ which is used in the $\sq{c}$-sorting word for~$w'$ but not in the $\sq{c}$-sorting word for~$w$.
They observed in~\cite[Remark~3.5]{KallipolitiMuhle} that the spanning tree formed by all rising paths from the source~$e$ to any other $c$-sortable element coincides with the $\sq{c}$-sorting tree mentioned above.
See Example~\ref{exm:123} and \fref{fig:sortingTree}.
They moreover use this EL-labeling to derive results on M\"obius functions of Cambrian lattices~\cite[Theorems~4.1,~4.2,~and~4.3]{KallipolitiMuhle}.

\begin{example}
\label{exm:123}
Let~$W = \fS_4$ and~$c = \tau_1\tau_2\tau_3$.
The $c$-sortable elements, the Hasse diagram of the Cambrian lattice, the EL-labeling of~\cite{KallipolitiMuhle}, and the $\sq{c}$-sorting tree are represented in \fref{fig:sortingTree}.
We write~$\it{12.1}$ instead of~$\tau_1\tau_2.\tau_1$ to simplify the picture (the dots mark the separation between the blocks~$\sq{c}_{K_i}$).

\begin{figure}[ht]
  \centerline{\includegraphics[width=\textwidth]{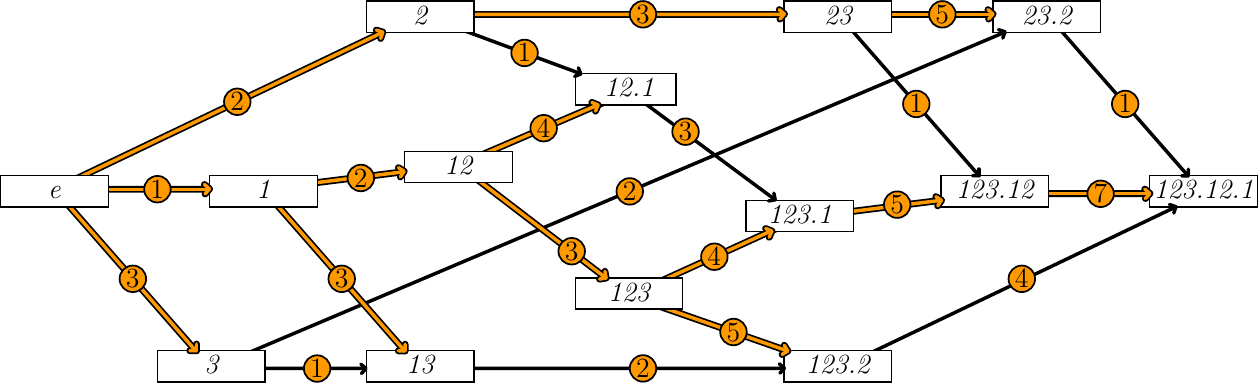}}
  \caption{The $(\sq{\tau}_1\sq{\tau}_2\sq{\tau}_3)$-sorting tree is the spanning tree corresponding to an EL-labeling of the $(\tau_1\tau_2\tau_3)$-Cambrian lattice.}
  \label{fig:sortingTree}
\end{figure}
\end{example}

We now recall that Cambrian lattices can be seen as increasing flip posets.
This interpretation was presented in~\cite[Sections~6.3.2 and~6.4]{PilaudStump}, based on previous connections between $c$-sortable elements and $c$-clusters~\cite{Reading-sortableElements}, and between $c$-clusters and facets of the subword complex~\cite{CeballosLabbeStump}.

Let~$\cwo{c}$ denote the $\sq{c}$-sorting word for the longest element~$w_\circ \in W$.
To simplify notations, we write~$\subwordComplex[\sq{c}]$ for the subword complex~$\subwordComplex[\sq{c}\cwo{c}, w_\circ]$.
Similarly, we denote by~$\facets[\sq{c}]$ its facets, by~$\flipGraph[\sq{c}]$ its increasing flip graph, by~$\flipPoset[\sq{c}]$ its increasing flip poset, and by~$\positiveSourceTree[\sq{c}]$ its positive source tree.
Following~\cite[Section~5.1]{PilaudStump}, we define a map~$\kappa : W \to \facets[\sq{c}]$ by sending an element~$w \in W$ to the unique facet~$\kappa(w)$ whose root configuration~$\Roots{\kappa(w)}$ is contained in~$w(\Phi^+)$.
For the subword complex~$\subwordComplex[\sq{c}]$, it turns out that the fibers of this map are intervals, and that their minimal elements are precisely the $c$-sortable elements.
This gives the following proposition.

\begin{proposition}[\protect{\cite[Corollary~6.31]{PilaudStump}}]
\label{prop:isomorphism}
The map associating to a facet~$I$ the unique (weak order) minimal element in~$\kappa^{-1}(I)$, is a poset isomorphism between the increasing flip poset and the Cambrian lattice.
\end{proposition}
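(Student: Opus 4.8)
The plan is to package together the three ingredients that the excerpt has already set up: the map~$\kappa : W \to \facets[\sq{c}]$ of~\cite[Section~5.1]{PilaudStump}, the description of its fibers as weak-order intervals whose minimal elements are the $c$-sortable elements, and the compatibility of $\kappa$ with the two order structures. First I would recall precisely the statement from~\cite{PilaudStump} that for the subword complex~$\subwordComplex[\sq{c}]$ each fiber $\kappa^{-1}(I)$ is a nonempty interval $[\sigma_I, w_I]$ of the (right) weak order on~$W$, and that the assignment $I \mapsto \sigma_I$ is a bijection from $\facets[\sq{c}]$ onto the set $\sortable{c}{W}$ of $c$-sortable elements. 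This already gives that the map $\phi : I \mapsto \min_{\le} \kappa^{-1}(I)$ is a well-defined bijection between $\facets[\sq{c}]$ and the ground set of the Cambrian lattice; it remains only to check that $\phi$ and $\phi^{-1}$ are order-preserving, i.e. that $\phi$ is a poset isomorphism from $\flipPoset[\sq{c}]$ to $\sortable{c}{W}$.

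The heart of the argument is therefore the order-compatibility. I would prove it at the level of cover relations on both sides, using Proposition~\ref{prop:characterizationDoubleRootFree} (the subword complex $\subwordComplex[\sq{c}]$ is double root free, so its increasing flip graph $\flipGraph[\sq{c}]$ really is the Hasse diagram of $\flipPoset[\sq{c}]$) together with the fact, recalled after Example~\ref{exm:123}, that $c$-sortable elements are ordered by the weak order restricted from~$W$. Concretely: if $I \edge J$ is an increasing flip in $\flipGraph[\sq{c}]$, with $I \ssm i = J \ssm j$ and $i < j$, I want to show that $\sigma_I < \sigma_J$ in the weak order, and conversely that any cover $\sigma \lessdot \sigma'$ in $\sortable{c}{W}$ arises this way. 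The key link is the root configuration: $\Roots{\kappa(w)} \subseteq w(\Phi^+)$, so $w \in \kappa^{-1}(I)$ iff $w(\Phi^+) \supseteq \Roots{I}$, iff $\inv(w^{-1})$ — equivalently the inversion set of $w$ read appropriately — avoids the positive roots in $\Roots{I}$. An increasing flip $I \edge J$ turns the positive root $\Root{I}{i}$ into its negative, by Proposition~\ref{prop:roots&flips}\eqref{prop:roots&flips:increasing}; one checks, using Proposition~\ref{prop:roots&flips}\eqref{prop:roots&flips:update}, that the constraint defining $\kappa^{-1}(J)$ is the constraint defining $\kappa^{-1}(I)$ with exactly one extra positive root forbidden, whence $\kappa^{-1}(J) \subseteq \kappa^{-1}(I)^{\uparrow}$ in a way that forces $\sigma_J$ to cover or lie weakly above $\sigma_I$ in the weak order; a cardinality/length count (the flip changes the size of the complementary reduced word predictably) pins the relation down to a genuine cover. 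The converse direction uses that every cover in $\sortable{c}{W}$ is a cover in the weak order of $W$ by a single reflection, which via $\kappa$ translates back into flipping a single position of the corresponding facet.

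I expect the main obstacle to be the bookkeeping in that translation between ``adding one forbidden positive root to the root configuration'' and ``moving up by one cover in the Cambrian lattice'' — in particular verifying that $\phi$ sends covers to covers and not merely to comparabilities, and that no cover of $\sortable{c}{W}$ is missed. This is exactly where one leans on double-root-freeness (so that $\flipGraph[\sq{c}]$ is the Hasse diagram and there are no ``shortcut'' triangles) and on the precise fiber description from~\cite{PilaudStump}; if one is willing to quote~\cite[Corollary~6.31]{PilaudStump} wholesale the proposition is immediate, so the honest content of a self-contained proof is precisely re-deriving the cover-to-cover correspondence from Propositions~\ref{prop:roots&flips} and~\ref{prop:characterizationDoubleRootFree}. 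I would close by noting that $\phi^{-1}$ order-preserving follows symmetrically, or simply from the fact that a bijection between two finite posets that sends covers to covers in both directions is an isomorphism.
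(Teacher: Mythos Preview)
The paper does not prove this proposition at all: it is stated purely as a quotation of \cite[Corollary~6.31]{PilaudStump}, with no argument given in the present text. So there is nothing to compare against; your proposal already goes well beyond what the paper does, and you explicitly recognise this when you write that quoting \cite[Corollary~6.31]{PilaudStump} wholesale makes the statement immediate.

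As for the self-contained sketch you outline, the overall architecture is sound (bijection from the fiber description, then order-compatibility at the level of covers), but two points would need real work before it becomes a proof. First, your claim that a cover $\sigma \lessdot \sigma'$ in $\sortable{c}{W}$ is ``a cover in the weak order of $W$ by a single reflection'' is a nontrivial theorem of Reading on sortable elements, not an obvious fact: the Cambrian lattice is the restriction of weak order to $c$-sortable elements, and a cover in a restricted poset need not be a cover in the ambient one. Second, the passage ``forces $\sigma_J$ to cover or lie weakly above $\sigma_I$ \dots\ a cardinality/length count pins the relation down to a genuine cover'' is exactly the delicate part of \cite[Section~6]{PilaudStump}; the root-configuration inclusion you describe does show $\kappa^{-1}(J)$ sits higher than $\kappa^{-1}(I)$ in a suitable sense, but turning this into a cover of minimal elements requires the specific combinatorics of the word $\sq{c}\cwo{c}$, not just double-root-freeness. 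In short, your outline is the right shape, but the substantive content is precisely what \cite{PilaudStump} supplies and what the present paper is content to cite.
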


Through this isomorphism, we can transfer the results discussed in this paper to Cambrian lattices.
We thus also obtain natural EL-labelings and spanning trees for Cambrian lattices.

\begin{example}
Let~$W = \fS_4$ and~$c = \tau_1\tau_2\tau_3$.
The facets of~$\subwordComplex[c]$, the Hasse diagram of~$\flipPoset[c]$, the positive edge labeling~$\positiveEdgeLabel$ of~$\flipGraph[c]$, and the positive source tree~$\positiveSourceTree[c]$ are represented in \fref{fig:greedyTreeCoxeter}.
Compare to \fref{fig:sortingTree}.

\begin{figure}[t]
  \centerline{\includegraphics[width=\textwidth]{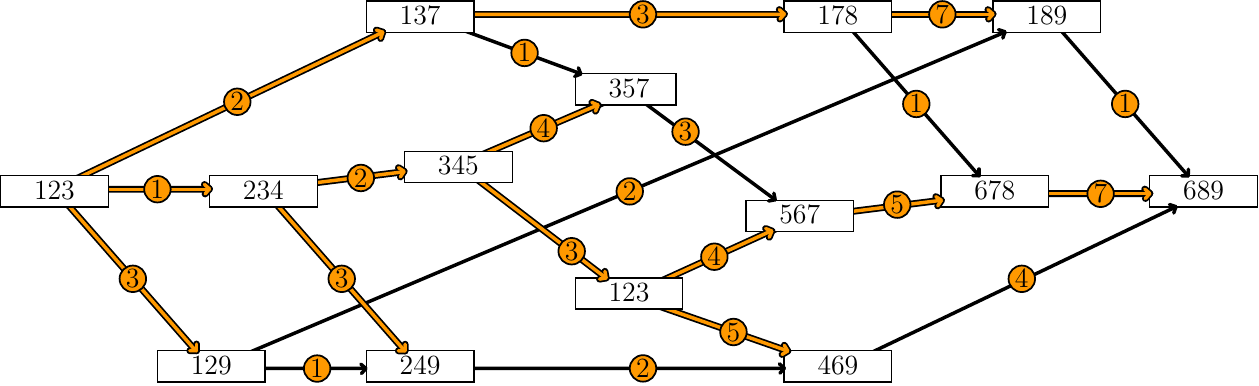}}
  \caption{The positive edge labeling~$\positiveEdgeLabel$ of~$\flipGraph[\sq{\tau}_1\sq{\tau}_2\sq{\tau}_3]$, and the positive source tree~$\positiveSourceTree[\sq{\tau}_1\sq{\tau}_2\sq{\tau}_3]$.}
  \label{fig:greedyTreeCoxeter}
\end{figure}
\end{example}

To finish, we want to observe that the positive edge labeling differs from the EL-labeling of~\cite{KallipolitiMuhle} and that the positive source tree~$\positiveSourceTree[\sq{c}]$ differs\footnote{The contrary was stated in a previous version of this paper. We thank an anonymous referee for pointing out this mistake.} from the $\sq{c}$-sorting tree~$\sortingTree$.
This is illustrated in the following (minimal) example.

\begin{example}[Positive source tree $\neq$ Coxeter-sorting tree]
Consider the Coxeter group~$W = \fS_{5}$ and the Coxeter element~$c = \tau_4 \tau_2 \tau_3 \tau_1$.
In this situation, the four facets of~$\subwordComplex[\sq{c}]$ given by
$$F_1 = \{1,8,9,11\},\quad F_2 = \{1,9,11,14\}, \quad F_3 = \{1,8,11,13\},\quad F_4 = \{1,11,13,14\},$$
are respectively sent by the isomorphism of Proposition~\ref{prop:isomorphism} to the $c$-sortable elements
$$w_1 = \tau_2\tau_3\tau_1.\tau_2, \quad w_2 = \tau_2\tau_3\tau_1.\tau_2\tau_1,\quad w_3 = \tau_2\tau_3\tau_1.\tau_2\tau_3, \quad w_4 = \tau_2\tau_3\tau_1.\tau_2\tau_3\tau_1.$$
The facets~$F_1, F_2, F_3, F_4$ (resp.~the $c$-sortable elements~$w_1, w_2, w_3, w_4$) form a square within the increasing flip poset (resp.~within the Cambrian lattice).
\fref{fig:differentTrees} represents the two EL-labelings and their corresponding spanning trees restricted to these squares.
The positive source tree~$\positiveSourceTree[\sq{c}]$ contains all edges of this square except~$F_3 \edge F_4$, while the $\sq{c}$-sorting tree~$\sortingTree$ contains all edges of this square except~$w_2 \edge w_4$.

\begin{figure}[t]
  \centerline{\includegraphics[width=\textwidth]{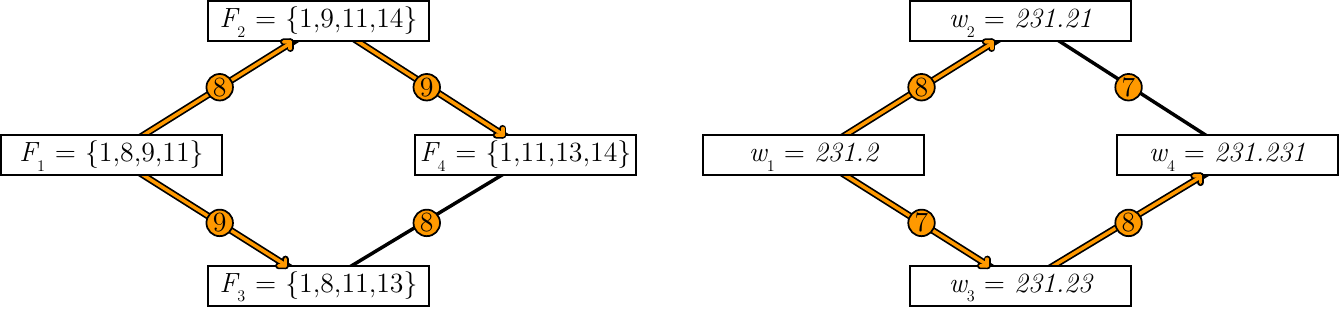}}
  \caption{The positive source tree~$\sortingTree$ differs from the $\sq{c}$-sorting tree.}
  \label{fig:differentTrees}
\end{figure}
\end{example}

\subsubsection{Duplicated words}
\label{subsubsec:duplicated}

Let~$\rho \eqdef \rho_1 \cdots \rho_\zeta$ be a reduced expression of an element~$\rho$ of~$W$.
For~$k \in [\zeta]$, we define a root~$\alpha_k \eqdef \rho_1 \cdots \rho_{k-1}(\alpha_{\rho_k})$.
Note that the roots $\alpha_1, \dots, \alpha_\zeta$ are pairwise distinct and positive.
They are the roots of the inversion set of~$\rho$.

Let~$X$ be an arbitrary subset of $\chi \eqdef |X|$ positions of~$[\zeta]$.
We denote by~$\Qdup$ the word on~$S$ with $\zeta + \chi$ letters which is obtained by duplicating the letters of~$\rho \eqdef \rho_1 \cdots \rho_\zeta$ at positions in~$X$.
To be more precise, define~${k^\bullet \eqdef k + |X \cap [k-1]|}$ for~${k \in [\zeta]}$.
Observe that $[\zeta+\chi] = \set{k^\bullet}{k \in [\zeta]} \sqcup \set{x^\bullet + 1}{x \in X}$.
Then, we set $\Qdup \eqdef \q_1 \cdots \q_{\zeta+\chi}$, where~$\q_{k^\bullet} \eqdef \rho_k$ for~$k \in [\zeta]$ and $\q_{x^\bullet+1} \eqdef \rho_x$ for~$x \in X$.
For~$k \in [\zeta]$, the position~$k^\bullet$ is the new position in~$\Qdup$ of the $k$\ordinal{} letter of~$\rho$, and for~$x \in X$, the position~$x^\bullet+1$ is the new position in~$\Qdup$ of the duplicated $x$\ordinal{} letter of~$\rho$.

For any~$x \in X$, the pair~$\{x^\bullet,x^\bullet+1\}$ of duplicated positions intersects any facet of~$\subwordComplex[\Qdup, \rho]$, otherwise the expression would not be reduced.
It follows that any facet of~$\subwordComplex[\Qdup, \rho]$ contains precisely one element of each pair~$\{x^\bullet,x^\bullet+1\}$ of duplicated positions and no other position.
Therefore, the facets of~$\subwordComplex[\Qdup, \rho]$ are precisely the sets~$I_\varepsilon \eqdef \set{x^\bullet + \varepsilon_x}{x \in X}$ where ${\varepsilon \eqdef (\varepsilon_1,\dots,\varepsilon_\chi) \in \{0,1\}^X}$.
Moreover, the roots of the facet~$I_\varepsilon$ of~$\subwordComplex[\Qdup, \rho]$ are given by~$\Root{I_\varepsilon}{k^\bullet} = \alpha_k$ for~${k \in [\zeta]}$ and~$\Root{I_\varepsilon}{x^\bullet+1} = (-1)^{\varepsilon_x}\alpha_x$ for~${x \in X}$.
Thus, the subword complex~$\subwordComplex[\Qdup, \rho]$ is double root free, since the roots~$\alpha_1, \dots, \alpha_\zeta$ are pairwise distinct.

The subword complex~$\subwordComplex[\Qdup, \rho]$ is the boundary complex of the $\chi$-dimensional cross polytope.
In particular, the graph of increasing flips~$\flipGraph[\Qdup, \rho]$ is the directed $1$-skeleton~$\square_\chi$ of a $\chi$-dimensional cube, and the increasing flip poset~$\flipPoset[\Qdup, \rho]$ is a boolean poset.

The positive greedy facet~$\positiveFacet[\Qdup, \rho]$ is the facet~$I_{\zero}$, while the negative greedy facet~$\negativeFacet[\Qdup, \rho]$ is the facet~$I_{\one}$.
The positive and negative edge labelings~$\positiveEdgeLabel$ and~$\negativeEdgeLabel$ of~$\subwordComplex[\Qdup, \rho]$ are essentially the same as the edge labeling~$\lambda$ of~$\square_\chi$ presented in Example~\ref{exm:cube}.
More precisely, for any edge~$\varepsilon \edge \varepsilon'$ of~$\square_\chi$, we have
$$\psi \circ \lambda(\varepsilon \sep \varepsilon') = \positiveEdgeLabel(I_\varepsilon \sep I_{\varepsilon'}) = \negativeEdgeLabel(I_\varepsilon \sep I_{\varepsilon'}) - 1,$$
where~$\psi : [\chi] \to \set{x^\bullet}{x \in X}$ is such that~$\psi(1) < \psi(2) < \dots < \psi(\chi)$.
Since ${\positiveEdgeLabel(\cdot) = \negativeEdgeLabel(\cdot)-1}$, the positive and negative source trees~$\positiveSourceTree[\Qdup, \rho]$ and~~$\negativeSourceTree[\Qdup, \rho]$ coincide.
Similarly the positive and negative sink trees~$\positiveSinkTree[\Qdup, \rho]$ and~$\negativeSinkTree[\Qdup, \rho]$ coincide as well.
Moreover, the map~$\varepsilon \mapsto I_\varepsilon$ defines a graph isomorphism from the $\lambda$-source tree of~$\square_\chi$ to the source trees~$\positiveSourceTree[\Qdup, \rho] = \negativeSinkTree[\Qdup, \rho]$, and from the $\lambda$-sink tree of~$\square_\chi$ to the sink trees~$\positiveSinkTree[\Qdup, \rho] = \negativeSinkTree[\Qdup, \rho]$.
See Example~\ref{exm:cube} and \fref{fig:cube}.

Finally, the M\"obius function on the increasing flip poset~$\flipPoset[\Qdup,\rho]$ is given by
$$\mu(I_\varepsilon, I_{\varepsilon'}) = \begin{cases} (-1)^{\delta(\varepsilon, \varepsilon')} & \text{if } \varepsilon \directedPath \varepsilon', \\ 0 & \text{otherwise,} \end{cases}$$
where~$\delta$ denotes the Hamming distance on the vertices of the cube.
See Example~\ref{exm:cube2}.


\section*{Acknowledgments}

We are very grateful to the two anonymous referees for their detailed reading of several versions of the manuscript, and for many valuable comments and suggestions, both on the content and on the presentation.
Their suggestions led us to the current version of Proposition~\ref{prop:intersection}, to correct a serious mistake in a previous version, and to improve several arguments in various proofs.

V.\,P.~thanks M.~Pocchiola for introducing him to the greedy flip algorithm on pseudotriangulations and for uncountable inspiring discussions on the subject.
We thank M.~Kallipoliti and H.~M\"uhle for mentioning our construction in~\cite{KallipolitiMuhle}.
Finally, we thank the Sage and Sage-Combinat development teams for making available this powerful mathematics software.

\bibliographystyle{alpha}
\bibliography{greedyFlip.bib}

\newcommand{\etalchar}[1]{$^{#1}$}
\begin{thebibliography}{BKPS06}

\bibitem[Bj{\"o}80]{Bjorner}
Anders Bj{\"o}rner.
\newblock Shellable and {C}ohen-{M}acaulay partially ordered sets.
\newblock {\em Trans. Amer. Math. Soc.}, 260(1):159--183, 1980.

\bibitem[BKPS06]{BronnimannKettnerPocchiolaSnoeying}
Herv{\'e} Br{\"o}nnimann, Lutz Kettner, Michel Pocchiola, and Jack Snoeyink.
\newblock Counting and enumerating pointed pseudotriangulations with the greedy
  flip algorithm.
\newblock {\em SIAM J.~Comput.}, 36(3):721--739 (electronic), 2006.

\bibitem[BW96]{BjornerWachs3}
Anders Bj{\"o}rner and Michelle~L. Wachs.
\newblock Shellable nonpure complexes and posets. {I}.
\newblock {\em Trans. Amer. Math. Soc.}, 348(4):1299--1327, 1996.

\bibitem[CLS11]{CeballosLabbeStump}
Cesar Ceballos, Jean-Philippe Labb\'e, and Christian Stump.
\newblock Subword complexes, cluster complexes, and generalized
  multi-associahedra.
\newblock Preprint, \texttt{arXiv:1108.1776}, 2011.

\bibitem[Hum90]{Humphreys}
James~E. Humphreys.
\newblock {\em Reflection groups and {C}oxeter groups}, volume~29 of {\em
  Cambridge Studies in Advanced Mathematics}.
\newblock Cambridge University Press, Cambridge, 1990.

\bibitem[KM04]{KnutsonMiller-subwordComplex}
Allen Knutson and Ezra Miller.
\newblock Subword complexes in {C}oxeter groups.
\newblock {\em Adv.~Math.}, 184(1):161--176, 2004.

\bibitem[KM05]{KnutsonMiller-GroebnerGeometry}
Allen Knutson and Ezra Miller.
\newblock Gr\"obner geometry of {S}chubert polynomials.
\newblock {\em Ann. of Math. (2)}, 161(3):1245--1318, 2005.

\bibitem[KM12]{KallipolitiMuhle}
Myrto Kallipoliti and Henri M\"uhle.
\newblock On the topology of the {C}ambrian semilattices.
\newblock Preprint \texttt{arXiv:1206.6248}, 2012.

\bibitem[Pap94]{Papi}
Paolo Papi.
\newblock A characterization of a special ordering in a root system.
\newblock {\em Proc. Amer. Math. Soc.}, 120(3):661--665, 1994.

\bibitem[PP12]{PilaudPocchiola}
Vincent Pilaud and Michel Pocchiola.
\newblock Multitriangulations, pseudotriangulations and primitive sorting
  networks.
\newblock {\em Discrete Comput. Geom.}, 48(1):142--191, 2012.

\bibitem[PS09]{PilaudSantos-multitriangulations}
Vincent Pilaud and Francisco Santos.
\newblock Multitriangulations as complexes of star polygons.
\newblock {\em Discrete~Comput.~Geom.}, 41(2):284--317, 2009.

\bibitem[PS11]{PilaudStump}
Vincent Pilaud and Christian Stump.
\newblock Brick polytopes of spherical subword complexes: A new approach to
  generalized associahedra.
\newblock Preprint, \texttt{arXiv:1111.3349}, 2011.

\bibitem[PV96]{PocchiolaVegter}
Michel Pocchiola and Gert Vegter.
\newblock Topologically sweeping visibility complexes via pseudotriangulations.
\newblock {\em Discrete Comput.~Geom.}, 16(4):419--453, 1996.

\bibitem[Rea04]{Reading-latticeCongruences}
Nathan Reading.
\newblock Lattice congruences of the weak order.
\newblock {\em Order}, 21(4):315--344 (2005), 2004.

\bibitem[Rea06]{Reading-cambrianLattices}
Nathan Reading.
\newblock Cambrian lattices.
\newblock {\em Adv.~Math.}, 205(2):313--353, 2006.

\bibitem[Rea07a]{Reading-coxeterSortable}
Nathan Reading.
\newblock Clusters, {C}oxeter-sortable elements and noncrossing partitions.
\newblock {\em Trans. Amer. Math. Soc.}, 359(12):5931--5958, 2007.

\bibitem[Rea07b]{Reading-sortableElements}
Nathan Reading.
\newblock Sortable elements and {C}ambrian lattices.
\newblock {\em Algebra Universalis}, 56(3-4):411--437, 2007.

\bibitem[RSS08]{RoteSantosStreinu-survey}
G{\"u}nter Rote, Francisco Santos, and Ileana Streinu.
\newblock Pseudo-triangulations~---~a survey.
\newblock In {\em Surveys on discrete and computational geometry}, volume 453
  of {\em Contemp.~Math.}, pages 343--410. Amer. Math. Soc., Providence, RI,
  2008.

\bibitem[S{\etalchar{+}}12]{sage}
William~A. Stein et~al.
\newblock {\em {S}age {M}athematics {S}oftware ({V}ersion 4.8)}.
\newblock The Sage Development Team, 2012.
\newblock {\tt http://www.sagemath.org}.

\end{thebibliography}

\end{document}